 \newtheorem{thm}{Theorem}[section]
 \newtheorem{cor}[thm]{Corollary}
 \newtheorem{lem}[thm]{Lemma}
 \newtheorem{prop}[thm]{Proposition}
 \theoremstyle{definition}
 \newtheorem{defn}[thm]{Definition}
 \theoremstyle{remark}
 \newtheorem{rem}[thm]{Remark}
 \newtheorem{example}[thm]{Example}
 \numberwithin{equation}{section}
\newcommand{\g}{\mathcal{G}_{G,\Lambda}}
\begin{document}

%
%
%
%
%
%
%
%
%

\title[Minimality and effectiveness]{Minimality and effectiveness of the groupoid associated to a self-similar ultragraph}

\author[H. Larki and N. Rajabzadeh-Hasiri]{Hossein Larki and Najmeh Rajabzadeh-Hasiri}

\address{Department of Mathematics\\
Faculty of Mathematical Sciences and Computer\\
Shahid Chamran University of Ahvaz\\
P.O. Box: 83151-61357\\
Ahvaz\\
 Iran}

\email{h.larki@scu.ac.ir, rajabzadeh.najmeh1396@gmail.com}

\subjclass{46L55}

\keywords{self-similar action, ultragraph, Exel-Pardo algebra, groupoid algebra}

\date{\today}


\begin{abstract}
The notion of a self-similar ultragraph $(G,\mathcal{U},\varphi)$ and its $C^*$-algebra $\mathcal{O}_{G,\mathcal{U}}$ were introduced in our recent work, where we proposed inverse semigroup and groupoid models for such $C^*$-algebras as well. In this paper, we investigate minimality and effectiveness of the groupoid of a self-similar ultragraph $(G,\mathcal{U},\varphi)$. In particular, we obtain a result for simplicity of the $C^*$-algebras $\mathcal{O}_{G,\mathcal{U}}$ in a certain case.
\end{abstract}

\maketitle


\section{Introduction}

Ultragraph $C^*$-algebras were introduced in \cite{ana00} to give a graph-like framework for the study of Exel-Laca algebras \cite{li18}. They of course include the graph $C^*$-algebras \cite{rea92}, and many graph-theorical arguments and results in the literature have been extended to the ultragraph setting \cite{ana00,apl00,pin13}. In particular, it is shown that the classes of graph $C^*$-algebras, Exel-Laca algebras, and ultragrah $C^*$-algebras coincide up to Morita-equivalence \cite{li18-boundkry}. Despite these similarities, we sometimes have more difficulty for studying the ultragraph $C^*$-algebras compared with the graph ones. For example, the quotient of an ultragraph $C^*$-algebra by a gauge-invariant ideal is not necessarily of the form of an ultragraph $C^*$-algebra \cite{pin13}, while it is the case for graph $C^*$-algebras. Moreover, the groupoid model for this class of $C^*$-algebras is more complicated (see \cite[Remark 3.7]{bed17}).

Furthermore, in recent years, the algebraic analogue of ultragraph $C^*$-algebras, namely ultragraph Leavitt path algebras, has been considered by many authors \cite{bed17,li1745tuih,li1p0907,dlpo830}.

The notion of a self-similar ultragraph $(G,\mathcal{U}, \varphi )$ and its $C^*$-algebra $\mathcal{O}_{G,\mathcal{U}}$ was introduced in our previous work \cite{ana02}, where inverse semigroup and groupoid models are proposed for such $C^*$-algebras and some properties such as Hausdorffness and $E^*$-unitary of them are described. In this paper, we continue our investigation in this concept, including minimal and effective properties for the groupoid $\mathcal{G}_{G,\mathcal{U}}$ associated to a self-similar ultragraph $(G,\mathcal{U},\varphi)$, and then simplicity of $\mathcal{O}_{G,\mathcal{U}}$ in a certain case. Recall that the minimality and effectiveness are important conditions for a groupoid $\mathcal{G}$, which are necessary, in particular, for simplicity of the universal $C^*$-algebra $C^*(\mathcal{G})$ (cf. \cite{brt92} for example).

Briefly, the paper is organized as follows. After reviewing definitions and background in Section \ref{sec2}, we determine in Sections \ref{sec3} and \ref{sec4} certain conditions for a self-similar ultragraph $(G,\mathcal{U}, \varphi )$, under which the associated tight groupoid $\mathcal{G}_{\mathrm{tight}}(\mathcal{S}_{G,{\mathcal{U} }})$ is minimal and effective. Naturally, these conditions generalize simultaneously the corresponding ones in \cite{bro14} for self-similar graphs and those given in \cite{apl00} for ordinary ultragraphs. Next, in Section \ref{sec5}, we will verify these conditions for some examples of self-similar ultragraphs. In Section \ref{sec6}, we first in Subsection \ref{subs6.1} provide an alternative definition for the $C^*$-algebra $\mathcal{O}_{G,\mathcal{U}}$, which is more analogous to that of a self-similar graph $C^*$-algebra in \cite{bro14}. Then, in Subsection \ref{subs6.2}, we consider a special case: whenever the $1$-cocycle is trivial. In this case, we prove that $\mathcal{O}_{G,{\mathcal{U} }}$ is $*$-isomorphic to the crossed product $C^*(\mathcal{U})\rtimes_\eta G$, where $\eta:G \curvearrowright C^*({\mathcal{U}})$ is the induced action from the action of $G$ on $\mathcal{U}$, and we obtain a result for the simplicity of $\mathcal{O}_{G,{\mathcal{U} }}$.


\section{Preliminaries}\label{sec2}

\subsection{Self-similar ultragraphs}

Ultragraphs generalize directed graphs in the sense that the source of edges could be a set of vertices. So, we write an ultragraph by a quadruple $\mathcal{U}=(U^0,\mathcal{U}^1,r,s)$, where $U^0$ is the vertex set, $\mathcal{U}^1$ is the edge set, and $r,s:\mathcal{U}^1\rightarrow P({U}^0)\setminus\{\varnothing\}$ are the range and source maps such that $r(e)$ is a singleton for every $e \in \mathcal{U}^1$. Then a finite path with the length $n \geq 1$ is a sequence $\alpha=e_1 \ldots e_{n} $ of edges satisfying $r(e_{i+1})\subseteq  s(e_i)$ for all $1\leq i\leq {n-1}$. In this case, we write $|\alpha|=n$ and we denote by $\mathcal{U}^n$ the set of paths of length $n$. Moreover, $r$ and $s$ may be extended on $\mathcal{U}^n$ by setting $r(\alpha)=r(e_1)$ and $s(\alpha)=s(e_n)$. Analogously, $\mathcal{U}^\infty$ stands for infinite paths in $\mathcal{U}$, which are infinite sequences $x= e_1 e_2 \ldots  $ of edges satisfying $r(e_{i+1})\subseteq  s(e_i)$ for all $i \geq 1$.

{\bf Standing assumption.} Throughout the paper, we assume that the ultragraph $\mathcal{U}$ is regular in the sense that every vertex $v\in U^0$ receives only finitely many edges i.e.,
\begin{equation}\label{(2..1)}
0<|\{e\in \mathcal{U}^1 : r(e)=\{v\}\}|< \infty \hspace{5mm} (\forall v\in U^0).
\end{equation}

Given an ultragraph $\mathcal{U}$, we write by $\mathcal{U}^0$ the smallest subset of the power set $P({U}^0)$ containing $\varnothing$, $\{v\}$ for all $v \in{U^0}$, and $s(e)$ for all $e\in{\mathcal{U}^1}$, which is closed under the finite operations $\cap$, $\cup$ and $\backslash$. Then, the (finite) path space of $\mathcal{U}$ is ${\mathcal{U}}^*=\cup_{n=0}^{\infty}{\mathcal{U}}^n$. We will also use the notation $\mathcal{U}^{\geq k}:=\cup_{n=k}^\infty\mathcal{U}^n$ for $k\geq 1$.

\begin{defn}[\cite{ana00}]\label{defn2.3}
Let $\mathcal{U}$ be an ultragraph satisfying (\ref{(2..1)}) for every $v\in U^0$. A {\it Cuntz-Krieger $\mathcal{U}$-family} is a collection of partial isometries $\{s_e:e\in {\mathcal{U}}^{1}\}$ with mutually orthogonal ranges and a collection of projections
 $\{p_A:A\in{\mathcal{U}}^{0}\}$ such that
\begin{enumerate}[(CK1)]
  \item $p_{\emptyset}=0,\;p_{A}p_{B}=p_{A\cap{B}}$ and $p_{A\cup{B}}=p_{A}+p_{B}-p_{A\cap{B}}$ for all $A,B\in {\mathcal{U}}^{0}$,
  \item $s_e^*s_e=p_{s(e)}$ for all $e\in {\mathcal{U}}^1$,
  \item $s_es_e^*\leq p_{r(e)}$ for all $e\in {\mathcal{U}}^1$, and
  \item $p_{\{v\}}=\sum_{r(e)=\{v\}}s_e s_e^*$ for all singletons $\{v\}\in {\mathcal{U}}^0$.
\end{enumerate}
Then the ultragraph $C^*$-algebra $C^*(\mathcal{U})$ is the universal $C^*$-algebra generated by a Cuntz-Krieger $\mathcal{U}$-family. It is shown in \cite{ana00} that such $C^*$-algebra exists with nonzero generators.
\end{defn}

Recall from \cite[ Lemma 2.8]{ana00} that if $A\in{\mathcal{U}}^0$  and $e\in{\mathcal{U}}^1$, then
$$p_As_e=\begin{cases}s_e\;\;\;\;\; \text{if}\;r(e)\subseteq  A\\0\;\;\;\;\;\;\text{otherwise} \end{cases} \text{and}\;\;\; s^*_ep_A=\begin{cases}s^*_e\;\;\;\;\;\text{if}\;r(e)\subseteq  A\\0\;\;\;\;\;\;\;\text{otherwise}. \end{cases}$$
Moreover, for a path $\alpha=e_1 \ldots e_n\in{\mathcal{U}}^*$ we define $s_{\alpha}$ to be $s_{e_1} \ldots s_{e_n}$
if $\left|{\alpha}\right|\geq1$, and  $p_A$ if $\alpha=A\in{\mathcal{U}}^0$. Then
\begin{equation}\label{eq2.2}
C^*(\mathcal{U})=\overline{\mathrm{span}}\{s_\alpha p_A s_\beta^*: \alpha,\beta\in \mathcal{U}^*, A\in \mathcal{U}^0, A\subseteq s(\alpha)\cap s(\beta)\}.
\end{equation}

Now, we recall the definition of a self-similar ultragraph from \cite{ana02}. An {\it{automorphism}} on $\mathcal{U}$ is a bijective map
$$\sigma:{U}^0\sqcup{\mathcal{U}}^1\rightarrow{U}^0\sqcup{\mathcal{U}}^1$$
 such that $\sigma(U^0)\subseteq{U^0}$,
$\sigma({\mathcal{U}}^1)\subseteq{{\mathcal{U}}^1}$, furthermore that $r(\sigma(e))=\sigma(r(e))$ and $s(\sigma(e))=\sigma(s(e))$ for every $e\in{\mathcal{U}}^1 $. Then the collection of all automorphisms on $\mathcal{U}$ forms a group under composition. Let $G$ be a (discrete) group. An {\it{action}} $G\curvearrowright \mathcal{U}$
 is a map $G \times (U^0\sqcup {\mathcal{U}}^1)\rightarrow U^0\sqcup {\mathcal{U}}^1$, denoted by $(g,a)\mapsto g\cdot a$, such that the action for each fixed $g\in G$ gives an automorphism on $\mathcal{U}$. Then, for each $g\in G$ and $A\in {\mathcal{U}}^0$, we define $g\cdot A:=\{ g \cdot v : v\in A\}$. It is shown in \cite[Lemma 3.1]{ana02} that $g\cdot A \in {\mathcal{U}}^0$ provided $A \in {\mathcal{U}}^0$.

\begin{defn}[\cite{ana02}]\label{defn2..7}
Let  $G$ be a discrete group with identity ${{1}_G}$ and $\mathcal{U}$ an ultragraph. A \textit{self-similar ultragraph} is a triple $(G,\mathcal{U},\varphi)$ such that

\begin{enumerate}[(1)]
  \item $\mathcal{U}$ is an ultragraph,
  \item $G$ acts on $\mathcal{U}$ by automorphisms, and
  \item $\varphi:G\times \mathcal{U}^1\rightarrow G$ is a 1-cocycle for $G\curvearrowright \mathcal{U}$ satisfying

\begin{enumerate}
\item $\varphi(gh,e)=\varphi(g,h\cdot e)\varphi(h,e)$\quad (the 1-cocycle property)\;and
 \item $\varphi(g,e)\cdot s(e)  \subseteq g\cdot s(e)$
\end{enumerate}
for all $g,h\in G$ and $e\in \mathcal{U}^1.$
\end{enumerate}
\end{defn}

Observe that we have $\varphi(1_G,e)=1_G$ for all $e\in \mathcal{U}^1$ because
$$\varphi(1_G,e)=\varphi(1_G 1_G,e)=\varphi(1_G,1_G \cdot e)\varphi(1_G,e) =\varphi(1_G,e) \varphi(1_G,e)$$
by the 1-cocycle property. We may inductively extend the action $G\curvearrowright (\mathcal{U}^0 \cup \mathcal{U}^1)$ and the cocycle $\varphi$ on all ${\mathcal{U}}^*$ and also ${\mathcal{U}}^{\infty}$  as follows: for each $\alpha=\alpha_1\alpha_2\in{\mathcal{U}}^*$ and $g\in {G}$, define
$$g\cdot \alpha:=(g\cdot\alpha_1)(\varphi(g,\alpha_1)\cdot \alpha_2)$$
and
$$\varphi(g,\alpha):=\varphi(\varphi(g,\alpha_1),\alpha_2).$$
In the same way, for every  $g\in{G}$ and $x=\alpha_1\alpha_2\cdots \in{\mathcal{U}}^{\infty}$, we may define
$$g\cdot x:=(g\cdot\alpha_1)(\varphi(g,\alpha_1)\cdot\alpha_2)(\varphi(g,\alpha_1\alpha_2)\cdot\alpha_3)\cdots$$
which again belongs to ${\mathcal{U}}^{\infty}$ (see \cite[Proposition 2.5]{bro14}).

Next, we associate a $C^*$-algebra to a self-similar ultragraph $(G,\mathcal{U},\varphi)$.

\begin{defn}[\cite{ana02}]\label{defn2...3}
Given a self-similar ultragraph $(G,\mathcal{U},\varphi)$, a\textit{ $(G,\mathcal{U})$-\textit{family}}  is a collection
$$\{p_{A},s_e:A\in {\mathcal{U}}^0,\; e\in {\mathcal{U}}^1\}\cup \{u_{A,g} :A\in {\mathcal{U}}^0,\;g\in G\}  $$
in a $C^*$-algebra  satisfying the following relations:

\begin{enumerate}[(1)]
  \item $\{p_{A},s_e:A\in {\mathcal{U}}^0,\; e\in {\mathcal{U}}^1\}$ is a Cuntz-Krieger $\mathcal{U}$-family,
  \item $u_{A,{{1}_G}}=p_A$ for all $A\in {\mathcal{U}}^0$,
  \item $(u_{A,g})^*=u_{{{g^{-1}}\cdot A},{g^{-1}}}$ for all $A\in {\mathcal{U}}^0$ and $g\in G$,
  \item $(u_{A,g})(u_{B,h})=u_{{A\cap (g\cdot B)},{gh}}$ for all $A,B\in {\mathcal{U}}^0$ and $g,h\in G$,
  \item $(u_{A,g})s_{e}=\begin{cases}
s_{g\cdot e}u_{{g \cdot s(e)},{\varphi(g,e)}}\;\;\;\;\;\;\text{if }\;g \cdot r(e)\subseteq A \\0\qquad\qquad\qquad\qquad \text{otherwise}
\end{cases} $  for all $A\in {\mathcal{U}}^0, e \in {\mathcal{U}}^1$ and $g\in G$.
\end{enumerate}
Then the $C^*$-algebra ${\mathcal{O}}_{G,\mathcal{U}}$ associated to $(G,\mathcal{U},\varphi)$ is the universal $C^*$-algebra generated by a $(G,\mathcal{U})$-family $\{p_A,s_e,u_{A,g}\}$.
\end{defn}

Note that, combining relations (2), (3) and (5) above, one may obtain by induction that
$$(u_{A,g})s_{\alpha}=
\begin{cases}
s_{g\cdot \alpha}u_{{g \cdot s(\alpha)},{\varphi(g,\alpha)}}\;\;\;\;\;\;\text{if }\;g \cdot r(\alpha)\subseteq A \\0\qquad\qquad\qquad\qquad \text{otherwise}
\end{cases}
$$
for all $A\in {\mathcal{U}}^0, \alpha \in {\mathcal{U}}^*$ and $g\in G$, where $s_\alpha:=p_A$ if $\alpha=A\in \mathcal{U}^0$ and $s_\alpha:=s_{e_1}\ldots s_{e_n}$ if $\alpha=e_1\ldots e_n\in \mathcal{U}^n$ for $n\geq 1$. According to \cite[Corollary 8.5]{ana02}, for any self-similar $(G,\mathcal{U},\varphi)$, the $C^*$-algebra ${\mathcal{O}}_{G,\mathcal{U}}$ with nonzero generators $\{p_A,s_e,u_{A,g}\}$ exists and we have
\begin{equation}\label{(2.1)}
{\mathcal{O}}_{G,\mathcal{U}}=\mathrm{\overline{span}}\{s_{\alpha}u_{A,g} s^*_{\beta}\;:g\in G,\; \alpha,\beta \in{\mathcal{U}}^*, A\in {\mathcal{U}}^0 ~ \text{and} ~ A\subseteq{s(\alpha)\cap g\cdot s(\beta})\}.
\end{equation}

\begin{rem}
In Subsection \ref{subs6.1} below, we will give an alternative definition for the $C^*$-algebra $\mathcal{O}_{G,\mathcal{U}}$ which is more analogous to that of $\mathcal{O}_{G,E}$ in \cite{bro14} for self-similar graphs. In fact, in Proposition \ref{prop6.2} we show, for each $g\in G$, that the series $\sum_{v\in U^0} u_{\{v\},g} $ converges to an element $u_g$ in the multiplier algebra $M(\mathcal{O}_{G,\mathcal{U}})$ such that the map $G\rightarrow M(\mathcal{O}_{G,\mathcal{U}})$, $g \mapsto u_g$,  is a unitary $*$-representation of $G$.
\end{rem}

\subsection{Groupoids and inverse semigroups}

In order to set notations, we briefly recall necessary definitions and background about groupoids, inverse semigroups, and groupoids of germs. The reader can refer to \cite{exe21,exe17,exe08} for more details.

A {\it groupoid} is a small category $\mathcal{G}$ in which every morphism has an inverse. The set of objects is called the {\it unit space} of  $\mathcal{G}$, which can be identified with $\mathcal{G}^{(0)}:=\{\lambda^{-1}\lambda:\lambda\in \mathcal{G}\}$. A multiplication $\lambda \eta$ in $\mathcal{G}$ is well-defined only if $s(\lambda )=r(\eta)$, where $s(\lambda ):=\lambda^{-1}\lambda$ and $r(\eta):=\eta \eta ^{-1}$. In this article, we will deal with topological groupoids, which are equipped with a topology such that multiplication, inversion, and the maps $r,s:\mathcal{G}  \mapsto \mathcal{G}^{(0)}$ are all continuous.

An \emph{inverse semigroup} is a discrete semigroup ${S}$ with an inverse map $s \mapsto   s^*$ in the sense that for every $s\in {S}$, there exists a unique $s^*\in S$ satisfying
$$s=s s^* s \hspace{5mm} \mathrm{and} \hspace{5mm} s^*=s^*s s^*.$$
Let ${S}$ be an inverse semigroup with $0$; the zero satisfies $0 s=s0=0$ for all $s \in {S}$. We denote by $\mathcal{E}( {S})$ the idempotent set in $ {S}$. It follows that $\mathcal{E}({S})$ is a commutative semilattice with the meet $e \wedge f:=ef$, and moreover $e^*=e$ for all $e \in \mathcal{E}({S})$. For $e, f \in \mathcal{E}( {S})$, we say $e$ \emph{intersects} $f$, denoted by $e \Cap f$, if $ef \neq 0$. We will often consider the partial order $\leq$ on $ {S}$ defined by $s \leq  t \Longleftrightarrow     s=te$ for some $e \in \mathcal{E}( {S})$; in particular, in the case $s,t \in \mathcal{E}( {S})$, we have $s \leq t$ if and only if $st=s$.

For every $e \in \mathcal{E}({S})$ let us use the notation $e\uparrow$ for the set
$$e\uparrow:=\{f\in \mathcal{E}( {S}):e\leq f\}$$
in $\mathcal{E}({S})$. A{\it{ filter}} in $\mathcal{E}({S})$ is a nonempty subset $\mathcal{F} \subseteq \mathcal{E}({S})$ satisfying
\begin{enumerate}[(1)]
  \item $e,f \in \mathcal{F} \Longrightarrow ef \in \mathcal{F}$, and
  \item $e\in \mathcal{F} \Longrightarrow e\uparrow \subseteq \mathcal{F}$.
\end{enumerate}
An {\it ultrafilter} is a proper maximal filter in $\mathcal{E}( {S})$. The set of all filters (ultrafilters) without 0 in $\mathcal{E}( {S})$ is denoted by $\widehat{\mathcal{E}}_0(S)$ ($\widehat{\mathcal{E}}_\infty(S)$ respectively). Then $\widehat{\mathcal{E}}_0(S)$ is a topological space with the neighborhoods
$$N(e;e_1,\ldots ,e_n):=\{\mathcal{F}\in \widehat{\mathcal{E}}_0(S): e\in \mathcal{F}, e_i \notin \mathcal{F}, \; 1\leq i \leq n\} \hspace{5mm} (e,e_i\in \mathcal{E}(S)).$$

\begin{defn}
The {\it  tight filter space} of ${S}$ is the closure of $\widehat{\mathcal{E}}_\infty({S})$ in $\widehat{\mathcal{E}_0}(S)$, which is denoted by $\widehat{\mathcal{E}}_{\mathrm{tight}}({S})$. Then, each ${\mathcal{F}}\in \widehat{\mathcal{E}}_{\mathrm{tight}}({S})$ is called a {\it tight} filter.
\end{defn}

Following \cite{exe21}, there is a natural action of $S$ on the tight filter space $\widehat{\mathcal{E}}_{\mathrm{tight}}(S)$. To define it, write
$$D^e:=\{ \mathcal{F} \in \widehat{\mathcal{E}}_{\mathrm{tight}}(S)  : e\in  \mathcal{F}  \}$$
for every $e\in {\mathcal{E}}(S)$. Then the action $S\curvearrowright \widehat{\mathcal{E}}_{\mathrm{tight}}(S)$ is $s\mapsto \theta_s$, where $\theta_s:D^{s^* s}\rightarrow D^{s s^*}$ maps every $\mathcal{F}\in D^{s^* s}$ to $(s\mathcal{F}s^*)\uparrow\in D^{s s^*}$. Now consider the set
$$S*\widehat{\mathcal{E}}_{\mathrm{tight}}(S)=\{(s,\mathcal{F})\in S\times \widehat{\mathcal{E}}_{\mathrm{tight}}(S): s^*s\in \mathcal{F}\}$$
and define the relation $(s,\mathcal{F})\sim (s',\mathcal{F}')$ whenever $\mathcal{F}=\mathcal{F}'$ and $se=s'e$
for some $e\in \mathcal{F}$. Let $[s,\mathcal{F}]$ be the  equivalence class of $(s,\mathcal{F})$. Then the {\it tight groupoid} associated to $S$ is $\mathcal{G}_{\mathrm{tight}}(S)=S*\widehat{\mathcal{E}}_{\mathrm{tight}}(S)/\sim$ with the multiplication
$$[t,\theta_s(\mathcal{F})][s,\mathcal{F}]:=[ts,\mathcal{F}]$$
and inversion
$$[s,\mathcal{F}]^{-1}:=[s^*,\theta_s(\mathcal{F})].$$
In addition, the source and  range maps are
$$ s([s,\mathcal{F}])=[s^*s,\mathcal{F}]  \hspace{5mm} \mathrm{and} \hspace{5mm}  r([s,\mathcal{F}])=[ss^*,\theta_s(\mathcal{F})],$$
and we may identify the unit space of $\mathcal{G}_{\mathrm{tight}}(S)$ with $\widehat{\mathcal{E}}_{\mathrm{tight}}(S)$ via the correspondence $[e,\mathcal{F}]\mapsto \mathcal{F}$.

\subsection{ The inverse semigroup associated to $(G,\mathcal{U},\varphi)$ }\label{sub2.4}
Let us here review from \cite{ana02} the inverse semigroup and tight groupoid associated to a self-similar ultragraph $(G,\mathcal{U},\varphi)$. We set $\mathcal{U}^{\sharp}:=\{\omega\} \cup \mathcal{U}^{\geq 1}$, where $\omega$ is denoted for the universal path of length zero in the sense that $\omega \alpha=\alpha \omega=\alpha$ for all $\alpha \in {\mathcal{U}}^{\geq 1}$. If necessary, we will write $s(\omega)=r(\omega)=U^0$ in computations and moreover, define $g \cdot \omega:=\omega$ and $\varphi(g,\omega):=g$ for every $g\in G$. Then the inverse semigroup ${\mathcal{S}}_{G,\mathcal{U}}$ is
\begin{equation}\label{(5...1)}
{\mathcal{S}}_{G,\mathcal{U}}: =\{(\alpha,A,g,\beta): \alpha,\beta \in \mathcal{U}^{\sharp}, ~ A\in {\mathcal{U}}^0, ~ g\in G  , \;\emptyset\neq A\subseteq{s(\alpha)\cap g \cdot s(\beta})\}\cup \{0\},
\end{equation}
with the multiplication
\begin{align*}
&(\alpha, A,g,\beta)(\gamma,B,h,\delta)=\\
&\left\{
  \begin{array}{ll}
    \big(\alpha(g\cdot \varepsilon),(g\cdot s(\varepsilon))\cap (\varphi(g,\varepsilon)\cdot B),\varphi(g,\varepsilon)h,\delta \big) & \hspace{-10mm} \text{if} ~ \gamma=\beta\varepsilon, ~ |\varepsilon|\geq 1, ~ g \cdot r(\varepsilon)\subseteq A, \\
    \big(\alpha, A \cap (g{\varphi(h^{-1},\varepsilon)}^{-1} h^{-1}\cdot s(\beta)),g{\varphi(h^{-1},\varepsilon)}^{-1},\delta (h^{-1}\cdot\varepsilon) \big) & \\
     & \hspace{-10mm} \text{if} ~ \beta=\gamma\varepsilon, ~ |\varepsilon|\geq 1, ~ r(\varepsilon)\subseteq B, \\
    \big(\alpha,A\cap (g\cdot  B),g h,\delta \big) & \hspace{-10mm} \text{if} ~ \gamma=\beta, ~ A\cap (g \cdot B) \neq \emptyset, \\
    ~ ~ 0 & \hspace{-10mm} \text{otherwise},
  \end{array}
\right.
\end{align*}
and the inverse
$$(\alpha,A,g,\beta)^*:=(\beta,g^{-1}\cdot A,g^{-1},\alpha).$$
It is shown in \cite[Proposition 5.3]{ana02} that ${\mathcal{S}}_{G,\mathcal{U}}$ is an inverse semigroup with the idempotent set
$$\mathcal{E}({\mathcal{S}}_{G,\mathcal{U}})=\{ q_{(\alpha,A)}:\; \alpha\in \mathcal{U}^{\sharp} , A\subseteq s(\alpha) \}$$
where $ q_{(\alpha,A)}:=(\alpha,A,1_G,\alpha)$. In particular,

\begin{cor}[{\cite[Corollary 5.6]{ana02}}]\label{cor2.5}
Let $(G,\mathcal{U},\varphi)$ be a self-similar ultragraph. For $q_{(\alpha,A)},q_{(\beta,B)}\in\mathcal{E}({\mathcal{S}_{G,\mathcal{U} }})$, we have $q_{(\alpha,A)}\le q_{(\beta,B)}$ if and only if one of the following holds:
\begin{enumerate}[$(1)$]
 \item $\alpha=\beta$ and $A\subseteq B$, or
 \item $\alpha=\beta\gamma$ for some $\gamma \in {\mathcal{U}}^{\geq 1}$ with $r(\gamma)\subseteq B$.
\end{enumerate}
\end{cor}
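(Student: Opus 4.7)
The plan is to use the standard characterization of the natural order on idempotents: $q_{(\alpha,A)}\le q_{(\beta,B)}$ if and only if $q_{(\alpha,A)}\,q_{(\beta,B)}=q_{(\alpha,A)}$. Since both factors have group component $1_G$, I would first record the simplification that $\varphi(1_G,\gamma)=1_G$ for every $\gamma\in\mathcal{U}^*$: this follows by induction on $|\gamma|$ from the paper's observation $\varphi(1_G,e)=1_G$ together with the extension rule $\varphi(g,\alpha_1\alpha_2)=\varphi(\varphi(g,\alpha_1),\alpha_2)$. This collapses every $\varphi$-factor that would otherwise appear in the multiplication formula of $\mathcal{S}_{G,\mathcal{U}}$, and also forces every group coordinate that arises in the computation to be $1_G$.

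Next, I would compute $q_{(\alpha,A)}\,q_{(\beta,B)}=(\alpha,A,1_G,\alpha)(\beta,B,1_G,\beta)$ by splitting into the three nontrivial alternatives in the multiplication rule, according to how the second path of the first factor ($\alpha$) compares with the first path of the second factor ($\beta$).

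\emph{Case A} ($\beta=\alpha\varepsilon$ with $|\varepsilon|\ge 1$): the first alternative of the multiplication applies and, after the $\varphi$-collapse, returns $(\beta,s(\varepsilon)\cap B,1_G,\beta)$. Since $|\beta|>|\alpha|$, this can never equal $q_{(\alpha,A)}$, so this case contributes no order relation. \emph{Case B} ($\alpha=\beta\varepsilon$ with $|\varepsilon|\ge 1$): the second alternative applies precisely when $r(\varepsilon)\subseteq B$; the $\varphi$-collapse turns the formula into $(\alpha,A\cap s(\alpha),1_G,\beta\varepsilon)$, and since $A\subseteq s(\alpha)$ by admissibility of $q_{(\alpha,A)}$, this is exactly $q_{(\alpha,A)}$. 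If instead $r(\varepsilon)\not\subseteq B$, the product is $0\ne q_{(\alpha,A)}$. This is exactly condition (2). \emph{Case C} ($\alpha=\beta$): the third alternative returns $(\alpha,A\cap B,1_G,\alpha)=q_{(\alpha,A\cap B)}$, which equals $q_{(\alpha,A)}$ iff $A\subseteq B$, matching condition (1).

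The converse is immediate: starting from (1) or (2), the same computation gives $q_{(\alpha,A)}\,q_{(\beta,B)}=q_{(\alpha,A)}$. I do not expect any serious obstacle; the only point that needs a moment of care is recognising, in Case B, that the seemingly mysterious factor $A\cap s(\alpha)$ produced by the multiplication formula is just $A$ thanks to the standing constraint $A\subseteq s(\alpha)$ built into the definition of $\mathcal{S}_{G,\mathcal{U}}$. Beyond that, one should quickly check the degenerate situations where $\alpha$ or $\beta$ equals $\omega$, but these are absorbed into Cases B and C under the convention $s(\omega)=U^0$ and $\omega\gamma=\gamma\omega=\gamma$.
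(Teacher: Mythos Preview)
Your argument is correct. The paper does not supply a proof of this corollary---it is quoted verbatim from \cite[Corollary~5.6]{ana02}---so there is no in-paper proof to compare against; your direct verification via $q_{(\alpha,A)}\le q_{(\beta,B)}\iff q_{(\alpha,A)}q_{(\beta,B)}=q_{(\alpha,A)}$ and a case split on the multiplication formula is exactly the expected route, and all three cases are handled correctly (including the observation $A\cap s(\alpha)=A$ in Case~B and the $\omega$ degeneracies).
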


Note that the idempotent set $\mathcal{E}({\mathcal{S}}_{G,\mathcal{U}})$ coincides with the one $\mathcal{E}({\mathcal{S}}_{\mathcal{U}})$ for the inverse semigroup ${\mathcal{S}}_{\mathcal{U}}$ of the ultragraph $\mathcal{U}$ {\cite{bed17}}. So, they have same filter spaces, i.e.
${\widehat{\mathcal{E}}}_{0}({\mathcal{S}}_{G,\mathcal{U}})={\widehat{\mathcal{E}}}_{0}({\mathcal{S}}_{\mathcal{U}})$, and we may apply the description of the tight filter space ${\widehat{\mathcal{E}}}_{\mathrm{tight}}({\mathcal{S}}_{\mathcal{U}})$ in \cite[Section 3.1]{bed17} for $\widehat{\mathcal{E}}_{\mathrm{tight}}(\mathcal{S}_{G,\mathcal{U}})$. (It is originally proved in \cite{boa17} for labelled graphs and its ultragraph description in \cite{bed17} is deduced from Theorems 5.10 and 6.7 of \cite{boa17}.) To ease the notation, we will denote by $\mathcal{T}$ the tight filter space ${\widehat{\mathcal{E}}}_{\mathrm{tight}}({\mathcal{S}}_{G,\mathcal{U}})={\widehat{\mathcal{E}}}_{\mathrm{tight}}({\mathcal{S}}_{\mathcal{U}})$ in the sequel.

\begin{prop}[{\cite{boa17,bed17}}]\label{prop2.11}
Every tight filter $\mathcal{F}$ in $\mathcal{T}$ can be uniquely described as one of the following forms:
\begin{enumerate}[$(1)$]
  \item $\mathcal{F}$ is associated to a pair $(\alpha,\mathcal{B})$, where $\alpha\in \mathcal{U}^{\sharp}$ and $\mathcal{B}$ is a filter in the set
$${\mathcal{B}}_{s(\alpha)}:=\{A \in {{\mathcal{U} }^0} : A\subseteq s(\alpha)\}$$
such that $|A|=\infty$ for all $A \in \mathcal{B}$. If $\alpha=\alpha_1 \ldots \alpha_n$, then
$$\mathcal{F}={\mathcal{F}}_{(\alpha,\mathcal{B})}:=\{q_{(\alpha,A)}: A \in \mathcal{B}\} \cup \{q_{(\beta,A)} : |\beta|<|\alpha|, \beta={\alpha}_1\ldots \alpha_{|\beta|} ~ \text{and} ~ r(\alpha_{{|\beta|}+1})\subseteq A\}.$$
  \item $\mathcal{F}$ is associated to an infinite path $x=\alpha_1\alpha_2 \ldots \in {\mathcal{U}}^{\infty}$ such that
$$\mathcal{F}={\mathcal{F}}_x:=\{q_{(\beta,A)}:\: \beta={\alpha}_1\alpha_2\ldots \alpha_{|\beta|}\; \text{and} \;r(\alpha_{{|\beta|}+1})\subseteq A \subseteq s(\alpha_{|\beta|})\}.$$
\end{enumerate}
\end{prop}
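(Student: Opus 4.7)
The plan is to reduce the classification to the corresponding result for the inverse semigroup $\mathcal{S}_{\mathcal{U}}$ of the underlying ultragraph, which is already available via \cite{boa17,bed17}. As noted in the text preceding the proposition, the idempotent set $\mathcal{E}(\mathcal{S}_{G,\mathcal{U}})$ coincides with $\mathcal{E}(\mathcal{S}_{\mathcal{U}})$ as a semilattice (both equal $\{q_{(\alpha,A)}:\alpha\in\mathcal{U}^\sharp,\ A\subseteq s(\alpha)\}$ with meet determined by the order of Corollary \ref{cor2.5}), so the filter spaces and hence their tight subspaces agree as topological spaces. Thus it suffices to produce the classification at the level of the ultragraph semilattice.

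Given a tight $\mathcal{F}\in \mathcal{T}$, the first step is to extract a canonical path. If $q_{(\alpha,A)},q_{(\beta,B)}\in \mathcal{F}$ then their product is nonzero and lies in $\mathcal{F}$, so by Corollary \ref{cor2.5} the paths $\alpha$ and $\beta$ are comparable: one is a prefix of the other. Let $L:=\sup\{|\alpha|:q_{(\alpha,A)}\in \mathcal{F}\}\in\{0,1,2,\ldots,\infty\}$. If $L=\infty$, a strictly increasing sequence of lengths produces nested prefixes that determine a unique $x\in \mathcal{U}^\infty$, and upward closure together with Corollary \ref{cor2.5} forces $\mathcal{F}=\mathcal{F}_x$. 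If $L<\infty$, choose $\alpha$ of maximum length appearing in $\mathcal{F}$ and set $\mathcal{B}:=\{A\in \mathcal{B}_{s(\alpha)}:q_{(\alpha,A)}\in \mathcal{F}\}$; the filter properties of $\mathcal{F}$ transfer directly to show $\mathcal{B}$ is a filter in $\mathcal{B}_{s(\alpha)}$, and again Corollary \ref{cor2.5} shows the remaining members of $\mathcal{F}$ are exactly the prefix idempotents listed in part (1) of the statement.

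The subtle point — and the main obstacle — is to show, in the case $L<\infty$, that every $A\in \mathcal{B}$ must be infinite, and conversely to verify that the sets $\mathcal{F}_{(\alpha,\mathcal{B})}$ and $\mathcal{F}_x$ described in the statement are not merely filters but genuinely tight. This is where the standing regularity assumption \eqref{(2..1)} plays a role: if some $A\in \mathcal{B}$ were finite, then $A=\bigsqcup_{v\in A}\{v\}$ would be a finite union of singletons, each of which is covered in $\mathcal{E}(\mathcal{S}_{\mathcal{U}})$ by the finitely many idempotents $q_{(e,s(e))}$ with $r(e)=\{v\}$, using (CK4). The cover characterization of tight filters (they must meet every finite cover of an element they contain) would then force $\mathcal{F}$ to contain some $q_{(\alpha e,\cdot)}$ with $|\alpha e|>|\alpha|$, contradicting the maximality of $\alpha$. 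The converse direction — that a pair $(\alpha,\mathcal{B})$ with $\mathcal{B}$ consisting of infinite sets, or an infinite path $x$, really yields a tight filter — is the content of \cite[Theorems 5.10 and 6.7]{boa17}, specialised to ultragraphs in \cite{bed17}, which I would simply cite after making the reduction above explicit.
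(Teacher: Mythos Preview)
The paper does not actually prove this proposition: it is stated as a quotation from \cite{boa17,bed17}, with the preceding paragraph explaining only the reduction step (that $\mathcal{E}(\mathcal{S}_{G,\mathcal{U}})=\mathcal{E}(\mathcal{S}_{\mathcal{U}})$, so the tight filter spaces coincide) and then deferring entirely to \cite[Section 3.1]{bed17} and \cite[Theorems 5.10 and 6.7]{boa17}. Your proposal makes exactly the same reduction and cites the same results for the converse direction, so at the level of what the paper does, you are fully aligned.

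Where you go beyond the paper is in sketching the forward direction (extracting the path from $\mathcal{F}$ and showing $|A|=\infty$ when $L<\infty$). That sketch is essentially correct; one small slip is that the covering idempotents of $q_{(\alpha,\{v\})}$ should be written $q_{(\alpha e,\, s(e))}$ rather than $q_{(e,s(e))}$, since you need elements below $q_{(\alpha,\{v\})}$ in the semilattice. With that adjustment the cover argument goes through: regularity ensures finitely many such $e$, the collection $\{q_{(\alpha e,\, s(e))}: r(e)=\{v\}\}$ is a finite cover of $q_{(\alpha,\{v\})}$, and tightness forces $\mathcal{F}$ to contain one of them, contradicting maximality of $|\alpha|$. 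This extra detail is not in the paper but is a faithful unpacking of what \cite{boa17,bed17} prove.
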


Next, one may construct the tight groupoid ${{\mathcal{G}}}_{\mathrm{tight}}({\mathcal{S}}_{G,\mathcal{U}})$ in a natural way. Given any $s=(\alpha,A,g,\beta)$ in ${\mathcal{S}}_{G,\mathcal{U}}$, we have $ss^*=q_{(\alpha,A)}$ and $s^*s=q_{(\beta,g^{-1}\cdot A)}$, so
$$D^{ss^*}=\{ \mathcal{F} \in \mathcal{T}: \;  q_{(\alpha,A)}\in \mathcal{F} \}$$
which is denoted by $\mathrm{Z} (\alpha,A)$, and similarly $D^{s^*s}=\mathrm{Z} (\beta,g^{-1}\cdot A)$. Note that the sets $\mathrm{Z} (\alpha,A)$, where $\alpha\in   {\mathcal{U}}^{\sharp}$ and $A\subseteq s(\alpha)$, are compact open sets generating the topology on $\mathcal{T}$.$\;$Thus, the tight groupoid of ${\mathcal{S}}_{G,\mathcal{U}}$ is
$${\mathcal{G}}_{\mathrm{tight}}({\mathcal{S}}_{G,\mathcal{U}})=\{  [(\alpha,A,g,\beta),\mathcal{F} ]: \; \mathcal{F}\in  \mathrm{Z} (\beta,g^{-1}\cdot A)\},$$
which is an ample groupoid with the Hausdorff unit space ${{\mathcal{G}}}_{\mathrm{tight}}^{(0)}({\mathcal{S}}_{G,\mathcal{U}})=\mathcal{T}$.


\section{Minimality of $\mathcal{G}_{\mathrm{tight}}(\mathcal{S}_{G,{\mathcal{U} }})$}\label{sec3}

In this section, we provide certain conditions for a self-similar ultragraph, under which the associated tight groupoid is minimal. Our conditions extend those in \cite{ana00} for an ordinary ultragraph. Recall that, for a groupoid $\mathcal{G}$, a subset $U\subseteq \mathcal{G}^{(0)}$ is called \emph{invariant} if for every $\alpha\in \mathcal{G}$ we have $s(\alpha)\in U ~~\Longleftrightarrow ~~ r(\alpha)\in U$. Then, we say $\mathcal{G}$ is a \emph{minimal groupoid} if the only invariant open subsets of $\mathcal{G}^{(0)}$ are the empty set and $\mathcal{G}^{(0)}$ itself.

\begin{defn}\label{defn3..1}
We say $(G,\mathcal{U},\varphi)$ is {\it{G-cofinal}} if it satisfies the following conditions:
\begin{enumerate}[(1)]
\item For each infinite path $x=x_1x_2 \ldots$ and $v \in U^0$, there is a pair $(g,\gamma) \in  G\times\mathcal{U} ^{\geq 1}$  such that $r(\gamma)=\{g \cdot v\}$ and $s(\gamma)=s(x_n)$ for some $n \geq 1$.
    \begin{center}
\begin{tikzpicture}[scale=0.8]
\node (1) at (-13,0) {};
\node (2) at (-11,0) {};
\node (3) at (-9,0) {$\cdots$};
\node (10) at (-9.1,0.7) {};
\node (9) at (-8.6,0) {};
\node (4) at (-7,0) {};
\node (5) at (-6.7,0) {$\cdots$};

\node (gv) at (-11,-2.2) {$g \cdot v$};
\node (6) at (-11,0.7) {};
\node (7) at (-9,0.7) {};
\node (8) at (-7,0.7) {};
\node (gamma) at (-7,-1.2) {$(\gamma={\gamma}' x_n)$};
\node (v) at (-11,-1.2) {$v$};

\path[->] (2) edge node[above=-14pt] {$\hspace{-2mm}x_1$} (1);
\path[->] (3) edge node[above=-14pt] {$\hspace{-3mm}x_2$} (2);
\path[->] (4) edge node[above=-14pt] {$\hspace{-3mm}x_n$} (3);
\path[->] (6) edge node[above=-14pt] {$\hspace{-3mm}$} (1);
\path[->] (10) edge node[above=-14pt] {$\hspace{-3mm}$} (2);
\path[->] (8) edge node[above=-14pt] {$\hspace{-3mm}$} (3);
\path[->] (9) edge node[above=-16pt] {$\hspace{1mm}\gamma'$} (gv);

\end{tikzpicture}
\end{center}
\item For every $e \in \mathcal{U} ^1$ with $|s(e)|=\infty$ and each $v \in U^0$, there are finitely many pairs $\{(g_i,\gamma_i)\}_{i=1}^{n}\subseteq G \times {\mathcal{U} }^{\geq 1}$ such that $r(\gamma_i)=\{g_i \cdot v\}$ and $s(e)\subseteq \bigcup_{i=1}^n s(\gamma_i)$.
    \begin{center}
\begin{tikzpicture}[scale=0.8];
\node (set) at (-5.8,1) {$\Bigg\}$};
\node (2) at (-6,1.4) {};
\node (3) at (-6,0.8) {};
\node (setss) at (-5.8,-0.2) {$ \Big\}$};
\node (4) at (-6.4,0.2) {$\vdots$};
\node (v) at (-4,1.5) {$v$};
\node (g1v) at (-4,0.5) {$g_1\cdot v$};
\node (g2v) at (-4,-0.7) {$g_2\cdot v$};
\node (1) at (-8,0.5) {};

\path[->] (set) edge node[above=-2pt] {$\hspace{-2mm}{\gamma_1}$} (g1v);
\path[->] (setss) edge node[above=-2pt] {$\hspace{-2mm}{\gamma_2}$} (g2v);

\path[<-] (1) edge node[above=6pt] {$\hspace{-3mm}e$} (3);
\path[<-] (1) edge node[above=-2pt] {$\hspace{-2mm}$} (2);

\end{tikzpicture}

\end{center}
\end{enumerate}
\end{defn}

In \cite{kat08}, Exel and Pardo proposed a condition for an inverse semigroup $S$, which is equivalent to the minimality of the tight groupoid ${\mathcal{G} }_{\text{tight}}({S})$ \cite[Theorem 5.5]{kat08}. We state this result in Proposition \ref{prop3.3} below for convenience. Before that, let us recall the notion of (outer) cover for an element $s\in S$.

\begin{defn}
Let ${S}$ be an inverse semigroup with zero, and $s\neq0$ an element of ${S}$. A subset $\mathcal{C}\subseteq \mathcal{E}({S})$ is called an {\emph{outer cover}} for $s$  if for any idempotent $e\leq s$, there exists $f \in\mathcal{C}$ satisfying $ef\neq 0$ (in this case we write $e\Cap f)$. A {\it{cover}} for $s$ is an outer cover $\mathcal{C}$ that is contained in $\{e\in \mathcal{E}({S}): e\leq s\}$.
\end{defn}

\begin{prop}[{\cite [Theorem 5.5]{kat08}}]\label{prop3.3}
Let ${S}$ be an inverse semigroup with zero. Then the following are equivalent
\begin{enumerate}[$(1)$]
\item The standard action $S\curvearrowright \widehat{\mathcal{E}}_{\mathrm{tight}}$ is irreducible,
\item $\mathcal{G}_{\mathrm{tight}}({S})$ is minimal,
\item for every nonzero $e$ and $f$ in ${\mathcal{E}}$, there are $s_1,\ldots,s_n$ in $S$ such that $\{s_if{s^{*}_i}\}_{1\leq i \leq n}$ is an outer cover for $e$.
\end{enumerate}
\end{prop}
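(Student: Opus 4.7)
The plan is to separate the argument cleanly: (1) $\Leftrightarrow$ (2) is a general fact about groupoids of germs, while (2) $\Leftrightarrow$ (3) is the substantive equivalence that will combine the cover--intersection property of tight filters with the compactness of the basic open sets $D^e$.

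First, since the unit space of $\mathcal{G}_{\mathrm{tight}}(S)$ is identified with $\widehat{\mathcal{E}}_{\mathrm{tight}}(S)$ and $r([s,\mathcal{F}]) = \theta_s(\mathcal{F})$, a subset of the unit space is groupoid-invariant if and only if it is $S$-invariant under the action $s \mapsto \theta_s$. Minimality of the groupoid thus coincides with irreducibility of $S \curvearrowright \widehat{\mathcal{E}}_{\mathrm{tight}}(S)$, which is exactly (1) $\Leftrightarrow$ (2).

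For (3) $\Rightarrow$ (2), I would take a nonempty open invariant set $U$ and, using that $\{D^e : e \neq 0\}$ is a basis, pick $f \neq 0$ with $D^f \subseteq U$. Given an arbitrary tight filter $\mathcal{F}$ and any nonzero $e \in \mathcal{F}$, condition (3) supplies $s_1, \ldots, s_n$ for which $\{s_i f s_i^*\}_{i=1}^n$ is an outer cover for $e$. Then $\{e \cdot s_i f s_i^*\}_{i=1}^n$ is a genuine finite cover of $e$ lying below $e$, and by the defining property of tight filters it must meet $\mathcal{F}$; hence $s_i f s_i^* \in \mathcal{F}$ for some $i$, which gives $\theta_{s_i^*}(\mathcal{F}) \in D^f \subseteq U$. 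Invariance of $U$ then yields $\mathcal{F} \in U$, so $U = \widehat{\mathcal{E}}_{\mathrm{tight}}(S)$.

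For (2) $\Rightarrow$ (3), I would fix nonzero $e, f \in \mathcal{E}(S)$ and consider the $S$-saturation $V = \bigcup_{s \in S} \theta_s(D^f) = \bigcup_{s \in S} D^{s f s^*}$ of $D^f$. This set is open, invariant and nonempty, so by minimality it is dense. Since every nonzero idempotent $e' \leq e$ extends via Zorn's Lemma to an ultrafilter, and ultrafilters lie in $\widehat{\mathcal{E}}_{\infty}(S) \subseteq \widehat{\mathcal{E}}_{\mathrm{tight}}(S)$, each such $D^{e'}$ is nonempty, and density of $V$ forces $D^e \subseteq V$. Compactness of $D^e$ then delivers a finite subcover $D^e \subseteq \bigcup_{i=1}^n D^{s_i f s_i^*}$, and this topological containment translates back to the combinatorial statement that $\{s_i f s_i^*\}$ is an outer cover for $e$: for every $0 \neq e' \leq e$, the nonempty set $D^{e'}$ meets some $D^{s_i f s_i^*}$, which is equivalent to $e' \cdot s_i f s_i^* \neq 0$.

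The main obstacle is the dictionary between the topology of $\widehat{\mathcal{E}}_{\mathrm{tight}}(S)$ and the combinatorics of idempotent covers, namely the two facts that every finite cover of an element of a tight filter meets that filter, and that every nonzero idempotent belongs to some tight filter. Once these standard consequences of Exel's framework are invoked, the equivalences follow from the short arguments above.
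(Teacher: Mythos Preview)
The paper does not supply a proof of this proposition at all: it is quoted verbatim from Exel--Pardo \cite[Theorem~5.5]{kat08} and used as a black box in the proof of Theorem~\ref{thm3.4}. So there is no ``paper's own proof'' to compare against.

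That said, your sketch is essentially the standard argument and is largely correct. One small slip: in the (2)$\Rightarrow$(3) direction you write that the saturation $V=\bigcup_{s\in S}D^{sfs^*}$ is \emph{dense} by minimality and that density ``forces $D^e\subseteq V$''. Density alone would not give that containment. What you actually have is stronger: $V$ is open, nonempty and invariant, so by the definition of minimality used here (no proper nonempty open invariant subsets) one gets $V=\widehat{\mathcal{E}}_{\mathrm{tight}}(S)$ outright, and in particular $D^e\subseteq V$. With that correction the compactness step and the translation back to the outer-cover condition go through exactly as you describe. The (3)$\Rightarrow$(2) direction is fine as written, relying on the standard fact that a tight filter meets every finite cover of each of its members.
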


In order to prove the main result of this section, we will also need the following lemma.

\begin{lem}\label{lem9.2}
Let $(G,\mathcal{U} ,\varphi)$ be $G$-cofinal. Then for every singleton set $\{v\}, \{v'\} \in {\mathcal{U} }^0$, there are finitely many elements $s_1,\ldots, s_n \in \mathcal{S}_{G,{\mathcal{U} }}$ such that $\{s_i q_{(\omega,\{v\})} s_i^*\}^n_{i=1}$ is an outer cover for $q_{(\omega,\{v'\})}$.
\end{lem}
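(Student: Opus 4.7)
The plan is to reduce to a pointwise existence statement via compactness. Specifically, I would use that $\mathrm{Z}(\omega,\{v'\}) = D^{q_{(\omega,\{v'\})}} \subseteq \mathcal{T}$ is compact open, as noted in Subsection \ref{sub2.4}. If for every tight filter $\mathcal{F}\ni q_{(\omega,\{v'\})}$ I can produce some $s\in \mathcal{S}_{G,\mathcal{U}}$ with $s q_{(\omega,\{v\})} s^* \in \mathcal{F}$, then the open cover $\{D^{s q_{(\omega,\{v\})} s^*}\}$ of $\mathrm{Z}(\omega,\{v'\})$ admits a finite subcover, giving finitely many $s_1,\dots,s_n$. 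The family $\{s_i q_{(\omega,\{v\})} s_i^*\}_{i=1}^n$ is automatically an outer cover for $q_{(\omega,\{v'\})}$: any nonzero idempotent $e\le q_{(\omega,\{v'\})}$ belongs to some ultrafilter $\mathcal{F}$, which by upward closure contains $q_{(\omega,\{v'\})}$ and hence some $s_i q_{(\omega,\{v\})} s_i^*$, forcing $e\cdot s_i q_{(\omega,\{v\})} s_i^*\neq 0$.

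For the pointwise existence, I would split $\mathcal{F}\ni q_{(\omega,\{v'\})}$ according to Proposition \ref{prop2.11}. In the infinite-path type $\mathcal{F}=\mathcal{F}_x$, it is forced that $r(x)=\{v'\}$, and I would apply $G$-cofinality (1) to $x$ and $v$ to get a pair $(g,\gamma)$ with $r(\gamma)=\{g\cdot v\}$ and $s(\gamma)=s(x_n)$ for some $n\geq 1$. Setting $s:=(x_1\cdots x_n,\, s(x_n),\, g,\, g^{-1}\cdot\gamma)$ yields a valid element of $\mathcal{S}_{G,\mathcal{U}}$ since $s(x_1\cdots x_n)\cap g\cdot s(g^{-1}\cdot\gamma) = s(x_n)\cap s(\gamma)= s(x_n)$. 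The Case-2 multiplication formula combined with $\varphi(1_G,\cdot)=1_G$ gives $s q_{(\omega,\{v\})} = s$, so $s q_{(\omega,\{v\})} s^* = ss^* = q_{(x_1\cdots x_n,\, s(x_n))}$; this lies in $\mathcal{F}_x$ by Proposition \ref{prop2.11}(2).

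In the pair type $\mathcal{F}=\mathcal{F}_{(\alpha,\mathcal{B})}$, I would note that $|\alpha|\geq 1$, $r(\alpha)=\{v'\}$, and $s(\alpha)\in\mathcal{B}$ is infinite; in particular, the last edge $\alpha_{|\alpha|}$ of $\alpha$ has infinite source. Applying $G$-cofinality (2) to $\alpha_{|\alpha|}$ and $v$ gives finitely many pairs $\{(g_i,\gamma_i)\}_{i=1}^m$ with $s(\alpha)\subseteq\bigcup_i s(\gamma_i)$. For each $i$ with $s(\alpha)\cap s(\gamma_i)\neq\emptyset$, the element $s_i:=(\alpha,\, s(\alpha)\cap s(\gamma_i),\, g_i,\, g_i^{-1}\cdot\gamma_i)$ belongs to $\mathcal{S}_{G,\mathcal{U}}$, and the same computation yields $s_i q_{(\omega,\{v\})} s_i^* = q_{(\alpha,\, s(\alpha)\cap s(\gamma_i))}$. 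Using Corollary \ref{cor2.5}, one checks that $\{q_{(\alpha,\, s(\alpha)\cap s(\gamma_i))}\}_i$ is a finite outer cover of $q_{(\alpha,\, s(\alpha))}\in\mathcal{F}$ in $\mathcal{E}(\mathcal{S}_{G,\mathcal{U}})$; the characteristic tight-filter property of $\mathcal{F}$ then forces some member of this finite family to lie in $\mathcal{F}$.

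The main obstacle is this last step: passing from the outer-cover property of $\{q_{(\alpha,\, s(\alpha)\cap s(\gamma_i))}\}_i$ to a single index $i$ with $q_{(\alpha,\, s(\alpha)\cap s(\gamma_i))}\in\mathcal{F}$ is not purely combinatorial and requires the defining property of tight filters (every finite outer cover of an element of a tight filter meets the filter). The remaining steps amount to careful but routine bookkeeping with the multiplication and inversion in $\mathcal{S}_{G,\mathcal{U}}$, facilitated by the simplification $\varphi(1_G,\cdot)=1_G$.
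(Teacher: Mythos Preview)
Your proposal is correct and uses the same core ingredients as the paper --- compactness of $\mathrm{Z}(\omega,\{v'\})$, the two cases of Proposition~\ref{prop2.11}, and the identical conjugator formulas $s=(\alpha,\,s(\alpha)\cap s(\gamma),\,g,\,g^{-1}\cdot\gamma)$ yielding $s\,q_{(\omega,\{v\})}\,s^*=q_{(\alpha,\,s(\alpha)\cap s(\gamma))}$ --- but the two arguments are organised differently.

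The paper first applies compactness to the ``neutral'' cover $\{\mathrm{Z}(\alpha,s(\alpha))\}$ to extract finite sets $X,Y$ of paths, then builds the conjugators only for those $\alpha$, and finally verifies the outer-cover condition by a hands-on path-extension argument (the Claim): each sub-idempotent $q_{(\tau,C)}\le q_{(\omega,\{v'\})}$ is extended along an infinite path until some $\alpha_0\in X\cup Y$ appears as an initial segment, and intersection is then checked directly. You instead work pointwise in $\mathcal{T}$, producing for each tight filter a conjugate lying in it (invoking the tight-filter property in the $\mathcal{F}_{(\alpha,\mathcal{B})}$ case to pick one index $i$), and only then pass to a finite subcover; the outer-cover verification becomes the one-line ultrafilter trick. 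Your route trades the paper's explicit Claim for two abstract facts (tight filters absorb finite covers; nonzero idempotents sit in ultrafilters), which makes the final verification considerably shorter at the cost of invoking a bit more general machinery. One small point of precision: the family $\{q_{(\alpha,\,s(\alpha)\cap s(\gamma_i))}\}_i$ you produce is in fact a \emph{cover} of $q_{(\alpha,s(\alpha))}$ (each member lies below it), not merely an outer cover, which is exactly what the tight-filter property requires.
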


\begin{proof}
Fix $\{v\},\{v'\}\in \mathcal{U}^0$. We focus on the compact open cylinder $\mathrm{Z}(\omega,\{v'\})$, which contains ultrafilters of the forms ${\mathcal{F} }_x$ and ${\mathcal{F} }_{(\alpha,\mathcal B)}$ by Proposition \ref{prop2.11}. First, assume ${\mathcal{F} }_x \in \mathrm{Z}(\omega,\{v'\})$. Then $r(x)=\{v'\}$ and the $G$-cofinality says that there are a subpath $\alpha_x \in {\mathcal{U} }^{\geq 1}$ with $x=\alpha_x y$ and $(g,\gamma)\in G\times{\mathcal{U} }^{\geq 1}$ such that $r(\gamma)=\{g \cdot v\}$ and $s(\gamma)=s(\alpha_x )$. In particular, we have ${\mathcal{F} }_x \in \mathrm{Z}(\alpha_x ,s(\alpha_x ))$. Second, for every ultrafilter of the form ${\mathcal{F} }_{(\alpha,\mathcal B)}$ in $\mathrm{Z}(\omega,\{v'\})$, it is clear that ${\mathcal{F} }_{(\alpha,\mathcal B)}\in \mathrm{Z}(\alpha ,s(\alpha ))$. Therefore,
$$\mathrm{Z}(\omega,\{v'\})\subseteq
 \bigcup_{\substack{x\in \mathcal{U}^\infty\\ r(x)= \{v'\} }} {\mathrm{Z} \big(\alpha_x,s(\alpha_x)\big)}\ \cup \bigcup_{\substack{\alpha\in \mathcal{U}^{\geq 1}\\ r(\alpha)= \{v'\}\\ |s(\alpha)|=\infty }} {\mathrm{Z} \big(\alpha,s(\alpha)}\big).$$
Since $\mathrm{Z}(\omega,\{v'\})$ is compact, there are finite subsets
$$X \subseteq\{\alpha_x : x\in \mathcal{U}^\infty,\; r(x)=\{v'\}\}$$
 and
$$Y\subseteq\{\alpha \in \mathcal{U}^{\geq 1} : r(\alpha)=\{v'\}\;\text{and}\;  |s(\alpha)|=\infty\}$$
such that
\begin{equation}\label{(9.1)}
\mathrm{Z}(\omega,\{v'\})\subseteq \bigcup_{\alpha \in X \cup Y} {\mathrm{Z} \big(\alpha,s(\alpha)\big)}.
\end{equation}

Now we try to find an outer cover of conjugates of $q_{(\omega,\{v\})}$ for $q_{(\omega,\{v'\})}$. As said in the begining of the proof, for each $\alpha \in X$, there is $(g,\gamma)\in G \times {\mathcal{U} }^{\geq 1}$ such that $r(\gamma)=\{g \cdot v\}$ and $s(\gamma)=s(\alpha)$. Thus if we define $s_\alpha:=(\alpha,s(\alpha),g,g^{-1}\cdot \gamma) \in \mathcal{S}_{G,{\mathcal{U} }}$, then

\begin{align}\label{(9.2)}
s_\alpha q_{(\omega,\{v\})} s_{\alpha}^*&=(\alpha,s(\alpha),g,g^{-1}\cdot \gamma) (\omega,\{v\},1_G,\omega)(g^{-1}\cdot \gamma,g^{-1}\cdot s(\alpha),g^{-1},\alpha)\nonumber\\
&=(\alpha,s(\alpha),g,g^{-1}\cdot \gamma)(g^{-1}\cdot \gamma,g^{-1}\cdot s(\alpha),g^{-1},\alpha)\;\qquad (\text{since} \;s(\gamma)=s(\alpha)) \nonumber\\
&=(\alpha,s(\alpha),1_G,\alpha)\nonumber\\
&= q_{(\alpha,s(\alpha))}.
\end{align}
Furthermore, for every $\alpha \in Y$, since $|s(\alpha)|=\infty$, the $G$-cofinality implies that there are $(g_{\alpha,1},\gamma_{\alpha,1}),\ldots,(g_{\alpha,n_\alpha},\gamma_{\alpha,n_\alpha})\in G\times {\mathcal{U} }^{\geq 1} $ such that
\begin{equation}\label{(9.3)}
{\{g_{\alpha,i}\cdot v\}= r(\gamma_{\alpha,i})\;\text{and} \;s(\alpha)\subseteq \bigcup_{i=1}^{n_\alpha}s(\gamma_{\alpha,i})}.
\end{equation}
For convenience, write $B_{\alpha,i}:=s(\alpha)\cap s(\gamma_{\alpha,i})$ for $1\leq i \leq n_\alpha$. If we define $s_{\alpha,i}:=(\alpha,B_{\alpha,i},g_{\alpha,i},{g^{-1}_{\alpha,i}}\cdot \gamma_{\alpha,i})$, then
\begin{equation}\label{(9.4)}
{s_{\alpha,i}q_{(\omega,\{v\})} {s^*}_{\alpha,i}=q_{(\alpha,B_{\alpha,i})}}
\end{equation}
for all $1\leq i \leq n_\alpha$.

We will show that the finite collection
\begin{equation}\label{(9.5)}
{\{s_\alpha q_{(\omega,\{v\})}s_{\alpha}^* : \alpha \in X\}\cup (\bigcup_{\alpha  \in Y}\{s_{\alpha,i}q_{(\omega,\{v\})} s^*_{\alpha,i} : 1\leq i \leq n_\alpha\})}\end{equation}
is an outer cover for $q_{(\omega,\{v'\})}$. First, a claim:

\textbf{Claim.}
For each $q_{(\tau,C)}\leq q_{(\omega,\{v'\})}$, there are an extension $\beta=\tau\eta $ of $\tau$ with $r(\eta)\subseteq  C$ and $\alpha_0 \in X \cup Y$ such that $\alpha_0$ is a subpath of $\beta$, i.e. $\tau \eta= \beta=\alpha_0\alpha$.

{\it Proof of Claim.} It follows from $(\ref{(9.1)})$ and Corollary \ref{cor2.5}. Indeed, since $\mathcal{U}$ contains no sources, we may pick $y=y_1y_2 \dots \in {\mathcal{U}}^\infty$ with $r(y)\subseteq  C$, and hence $\tau y\in {\mathcal{U} }^\infty$. If we choose an integer $n\geq \max \{|\alpha|\;:\; \alpha \in X \cup Y\}$ and define $\beta:= \tau  y_1 \cdots y_n$, then
 $$|\beta|\geq \max _{\alpha \in X \cup Y}|\alpha|$$
and we have $q_{(\beta,s(\beta))}\leq q_{(\tau, C)}$. Moreover, $ \mathcal{F}_{\tau y}\in \mathrm{Z}(\omega,\{v'\})$ and by $(\ref{(9.1)})$, there is $\alpha_0 \in X \cup Y$ such that $ \mathcal{F}_{\tau y}\in \mathrm{Z}(\alpha_0,s(\alpha_0))$. In this case, we must have $\tau y=\alpha_0  z$ for some $z\in {\mathcal{U} }^\infty$, and since $\tau y=\beta y_{n+1}y_{n+2} \ldots$ with $|\beta|>|\alpha_0|$, we deduce that $\alpha_0$ is a subpath of $\beta$ of the form $\beta=\alpha_0\alpha^{'}$ and Claim is proved.

In order to continue the proof, pick a sub-idempotent $q_{(\beta,{C})}\leq q_{(\omega,\{v'\})}$. By the above Claim, there is a common extension $\beta'$ of $\beta$ and some $\alpha \in X \cup Y$ such that $q_{({\beta}^{'},s({\beta}^{'}))}\leq q_{(\beta,{C})}$. So, without loss of generality, we may replace $\beta$ by ${\beta}^{'}$ and assume $\beta=\alpha{\alpha}^{'}$ for some $\alpha \in X \cup Y$. In the case $\alpha \in X$, since $s_\alpha q_{(\omega,\{v\})}s_{\alpha}^*=q_{(\alpha,s(\alpha))}$ by (\ref{(9.2)}), and $q_{(\beta,s(\beta))}q_{(\alpha,s(\alpha))}=q_{(\beta,s(\beta))}$, we have
$$q_{(\beta,s(\beta))}\Cap s_\alpha q_{(\omega,\{v\})}s_{\alpha}^*.$$
In the other case, if ${\beta}=\alpha{\alpha}^{'}$ with $\alpha \in Y$, then $r({\alpha}^{'})\subseteq s({\alpha})$. Recall from (\ref{(9.3)}) that $s(\alpha)\subseteq \bigcup_{i=1}^{n_\alpha}s(\gamma_{\alpha,i})$, so
$$\bigcup_{i=1}^{n_\alpha}B_{\alpha,i}=\bigcup_{i=1}^{n_\alpha}(s({\alpha}) \cap s(\gamma_{\alpha,i}))=s({\alpha}) \cap (\bigcup_{i=1}^{n_\alpha}(s(\gamma_{\alpha,i}))=s({\alpha}).$$
Hence $r({\alpha}^{'})\subseteq \bigcup_{i=1}^{n_\alpha}B_{\alpha,i}$ and there is $1\leq j \leq {n_\alpha} $ such that $r({\alpha}^{'}) \subseteq B_{\alpha,j}$ (because $r({\alpha}^{'})$ is a singleton). This follows that $q_{(\beta,s(\beta))} \leq q_{(\alpha,B_{\alpha,j})} $ and therefore
$$q_{(\beta,s(\beta))}\Cap s_{\alpha_j}q_{(\omega,\{v\})} s_{\alpha_j}^*.$$
Consequently, the collection (\ref{(9.5)}) is a finite outer cover for $q_{(\omega,\{v'\})}$, completing the proof.
\end{proof}

\begin{thm}\label{thm3.4}
If $(G,\mathcal{U},\varphi)$ is a $G$-cofinal self-similar ultragraph, then the groupoid $\mathcal{G}_{\mathrm{tight}}(\mathcal{S}_{G,{\mathcal{U} }})$ is minimal.
\end{thm}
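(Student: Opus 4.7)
The plan is to verify condition~(3) of Proposition~\ref{prop3.3}: for any nonzero $e=q_{(\alpha,A)}$ and $f=q_{(\beta,B)}$ in $\mathcal{E}(\mathcal{S}_{G,\mathcal{U}})$, to produce finitely many $s_1,\dots,s_n\in\mathcal{S}_{G,\mathcal{U}}$ such that $\{s_i f s_i^*\}$ is an outer cover of $e$. First I would reduce on $f$. Pick any $v\in B$; Corollary~\ref{cor2.5} gives $q_{(\beta,\{v\})}\le q_{(\beta,B)}$, and since conjugation preserves order in the idempotent semilattice, any outer cover by conjugates of $q_{(\beta,\{v\})}$ is also one by conjugates of $q_{(\beta,B)}$. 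Setting $s_0:=(\omega,\{v\},1_G,\beta)$, a direct multiplication yields $s_0\,q_{(\beta,\{v\})}\,s_0^*=q_{(\omega,\{v\})}$ and $s_0^*\,q_{(\omega,\{v\})}\,s_0=q_{(\beta,\{v\})}$, so every conjugate $t\,q_{(\beta,\{v\})}\,t^*$ equals $(ts_0^*)\,q_{(\omega,\{v\})}\,(ts_0^*)^*$. Hence it suffices to exhibit a finite outer cover of $q_{(\alpha,A)}$ by conjugates of $q_{(\omega,\{v\})}$.

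If $\alpha\in\mathcal{U}^{\geq 1}$, set $\{v'\}:=r(\alpha)$. Corollary~\ref{cor2.5}(2) gives $q_{(\alpha,A)}\le q_{(\omega,\{v'\})}$, and any outer cover of a larger idempotent is automatically one of a smaller one, so Lemma~\ref{lem9.2} applied to $\{v\},\{v'\}$ supplies the required cover and settles this sub-case.

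The remaining case $\alpha=\omega$ would be handled by directly adapting the proof of Lemma~\ref{lem9.2} to the compact open set $\mathrm{Z}(\omega,A)$. By Proposition~\ref{prop2.11}, every tight filter in $\mathrm{Z}(\omega,A)$ has one of three forms: (i)~an infinite-path filter $\mathcal{F}_x$ with $r(x)\in A$; (ii)~an extension filter $\mathcal{F}_{(\gamma,\mathcal{B})}$ with $\gamma\in\mathcal{U}^{\geq 1}$ and $r(\gamma)\subseteq A$; or (iii)~a stationary filter $\mathcal{F}_{(\omega,\mathcal{B})}$ with $A\in\mathcal{B}$, occurring only when $|A|=\infty$. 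Filters of kinds~(i) and~(ii) are covered exactly as in Lemma~\ref{lem9.2}: $G$-cofinality~(1) applied to each $x$ produces a prefix $\alpha_x$ giving a cylinder $\mathrm{Z}(\alpha_x,s(\alpha_x))$, and $G$-cofinality~(2) applied to the last edge of $\gamma$ (whose source set is infinite) refines $s(\gamma)$ into finitely many pieces $C_{\gamma,i}$; in both situations the resulting cylinders take the conjugate form $D^{s q_{(\omega,\{v\})} s^*}$ as in~(\ref{(9.2)}) and~(\ref{(9.4)}). For kind~(iii) one uses that $A\in\mathcal{U}^0$ is a finite Boolean combination of singletons and source sets, which together with tightness of $\mathcal{B}$ forces some $s(e_0)\in\mathcal{B}$ with $|s(e_0)|=\infty$; applying $G$-cofinality~(2) to $e_0$ then yields cylinders $\mathrm{Z}(\omega,A\cap s(\gamma_i^0))=D^{s_i q_{(\omega,\{v\})} s_i^*}$ (for $s_i:=(\omega,A\cap s(\gamma_i^0),g_i,g_i^{-1}\cdot\gamma_i^0)$), and tightness of $\mathcal{B}$ guarantees these cover every such stationary filter. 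Compactness of $\mathrm{Z}(\omega,A)$ then extracts a finite subcover, giving the desired outer cover. The main obstacle is case~(iii): no $\omega$-stationary filter appears in the lemma's singleton-target setting, so handling them uniformly requires combining the tight-filter description of Proposition~\ref{prop2.11} with the Boolean structure of $\mathcal{U}^0$.
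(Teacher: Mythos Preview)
Your proposal is correct and uses the same essential ingredients as the paper (Proposition~\ref{prop3.3}, Lemma~\ref{lem9.2}, $G$-cofinality, compactness), but the case $\alpha=\omega$ is organized differently. The paper does not re-run the filter-by-filter analysis of Lemma~\ref{lem9.2} on $\mathrm{Z}(\omega,A)$; instead it first invokes the structural result \cite[Lemma~2.12]{ana00} to write $A\subseteq V\cup\bigcup_{e\in E}s(e)$ with $V$ and $E$ finite, then applies Lemma~\ref{lem9.2} as a black box to each $q_{(\omega,\{w\})}$ for $w\in V$, and handles each $q_{(\omega,s(e))}$ directly via $G$-cofinality~(2). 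This sidesteps your case~(iii) entirely: no stationary filter $\mathcal{F}_{(\omega,\mathcal{B})}$ ever needs to be touched, and the outer-cover property is verified by the path-extension argument (the ``Claim'' in Lemma~\ref{lem9.2}) rather than by appealing to tightness of $\mathcal{B}$. Your route works too---the key point that a finite cylinder cover of $D^{e}$ yields an outer cover of $e$ follows from the fact that every nonzero idempotent lies in some tight filter, and your case~(iii) argument is valid because tightness of $\mathcal{F}_{(\omega,\mathcal{B})}$ forces $\mathcal{B}$ to be prime on finite unions in $\mathcal{U}^0$---but it requires re-establishing parts of Lemma~\ref{lem9.2} in a broader setting and leaning on the tight-filter characterization where the paper gets away with a direct combinatorial check on sub-idempotents. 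The paper's decomposition-first approach is shorter and reuses Lemma~\ref{lem9.2} modularly; yours is more uniform but does more work.
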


\begin{proof}
In light of \cite[Theorem 5.5]{kat08}, it suffices to show that, for every $q_{(\alpha,A)},q_{(\beta,B)} \in \mathcal{E}(\mathcal{S}_{G,{\mathcal{U} }})$, there exist $s_1,\ldots,s_n \in \mathcal{S}_{G,{\mathcal{U} }}$ such that $\{s_i q_{(\beta,B)} s_{i}^*\}_{i=1}^{n}$ is an outer cover for $q_{(\alpha,A)}$. Since we have $s q_{(\beta,B)} s^*=q_{(\omega,B)} $ with $s:=(\omega,B,1_G,\beta)$ and also $q_{(\alpha,A)} \leq q_{(\omega,r(\alpha))}$, we can assume without loss of generality that $q_{(\alpha,A)}=q_{(\omega,A)}$ and $q_{(\beta,B)}=q_{(\omega,B)}$.

So fix $A,B\in \mathcal{U}^0$ and pick some $v\in B$. Then, by \cite[Lemma 2.12]{ana00} there is a finite set $V \subseteq U^0$ of vertices and a finite set $E$ of edges $e\in \mathcal{U}^1$ with $|s(e)|=\infty$ such that
\begin{equation}\label{(9.6)}
{A\subseteq (\bigcup_{e\in E}s(e))\cup V}.
\end{equation}
We want to find a finite outer cover for $\{q_{(\omega,\{w\})},q_{(\omega,s(e))}: w\in V ,e\in E\}$ by conjugates of $q_{(\omega,\{v\})}$. For each $w\in V$, Lemma \ref{lem9.2} says that there is a finite set $M_w  \subseteq \mathcal{S}_{G,{\mathcal{U} }}$ such that $\{sq_{(\omega,\{v\})}s^*: s\in M_w\}$ is an outer cover for $q_{(\omega,\{w\})}$. Moreover, for each $e\in E$, since $|s(e)|=\infty$, condition (2) of the $G$-cofinality gives finitely many pairs $(g_{1}^e,\gamma_{1}^e),\ldots, (g_{n_e}^e,\gamma_{n_e}^e)\in G \times {\mathcal{U} }^{\geq 1}$ satisfying
$$r(\gamma_{i}^e)=\{g_{i}^e \cdot v \} \qquad (\forall \; 1\leq i \leq n_e)$$
and $s(e)\subseteq \bigcup_{i=1}^{n_e}s(\gamma_{i}^e)$. Denote $A_{i}^e:=s(e) \cap s(\gamma_{i}^e)$ for $1\leq i \leq n_e$. If $s_{i}^e:=(\omega,A_{i}^e,g_{i}^e,(g_{i}^e)^{-1}\cdot \gamma_{i}^e)$, then we have
$$s_{i}^e q_{(\omega,\{v\})} {(s_{i}^e)}^*=q_{(\omega,A_{i}^e)} \qquad (\forall \; 1\leq i \leq n_e).$$
Since $\bigcup_{i=1}^{n_e}A_{i}^e= s(e)$, it follows that $\{s_i^e q_{(\omega,\{v\})}(s_i^e)^*: 1\leq i\leq n_e\}$ is an outer cover for $q_{(\omega,s(e))}$. Now set $M_e:=\{s_{i}^e : 1\leq i \leq n_e\}$ for every $e\in E$. Then $M=(\bigcup_{w \in V}M_w)\cup (\bigcup_{e\in E}M_e)$ is a finite subset of $\mathcal{S}_{G,{\mathcal{U} }}$ such that $\{sq_{(\omega,\{v\})} s^* : s\in M\}$ is an outer cover for $\{q_{(\omega,\{w\})},q_{(\omega,s(e))}: w\in V , e\in E\}$, so is also for $q_{(\omega,A)}$ by (\ref{(9.6)}). This completes the proof.
\end{proof}


\section{Effectiveness of  $\mathcal{G}_{\mathrm{tight}}(\mathcal{S}_{G,{\mathcal{U} }})$}\label{sec4}

In this section, the effective property for the tight groupoid ${\mathcal{G}}_{\mathrm{tight}}({\mathcal{S}}_{G,\mathcal{U}})$ is investigated. Recall that a groupoid $\mathcal{G} $ is called {\it{effective}} if the interior of its isotropy group bundle $\{\alpha \in {\mathcal{G} }: s(\alpha)=r(\alpha)\}$ is equal to ${\mathcal{G} }^{(0)}$.
In order to study the effectiveness of $\mathcal{G}_{\mathrm{tight}}(\mathcal{S}_{G,{\mathcal{U} }})$, we may apply the characterization of \cite[Theorem 4.7]{kat08}, which says that, for any inverse semigroup $\mathcal{S}$ the tight groupoid $\mathcal{G}_{\mathrm{tight}}(\mathcal{S})$ is effective if and only if every interior fixed point for the maps $\theta_s :D^{s^*s}\rightarrow D^{ss^*}$, defined by $\theta_s(\mathcal{F})=(s \mathcal{F}s^*)\uparrow$, is a trivial one. So, before stating our main result, we need to compute the fixed points of $\theta_s$ for all cases of $s=(\alpha,A,g,\beta)$ in $\mathcal{S}_{G,{\mathcal{U} }}$. To this end, the computations of $\theta_s (\mathcal{F})$ for ultrafilters of the forms ${\mathcal{F}}_{(\gamma,{\mathcal{B}})}$ and ${\mathcal{F}}_x$ in \cite[Section 7]{ana02} are crucial. We recall them for convenience.

\begin{prop}[{\cite[Proposition 7.2]{ana02}}]\label{prop2.13}
Let $s=(\alpha,A,g,\omega)$ be an element of $\mathcal{S}_{G,{\mathcal{U}}}$. Then
\begin{enumerate}[$(1)$]
    \item For each $\mathcal{F}_{(\omega,\mathcal{B})}\in D^{s^*s}=\mathrm{Z} (\omega,g^{-1}\cdot A)$, we have
$$\theta_{s}(\mathcal{F}_{(\omega,\mathcal{B})})={\mathcal{F}}_ {(\alpha,g\cdot \mathcal{B}\downarrow_{s(\alpha)})},$$
where $g\cdot \mathcal{B}\downarrow_{s(\alpha)}=\{(g\cdot B)\cap s(\alpha)  :   B\in  \mathcal{B}  \}.$
    \item For each ${\mathcal{F}}_ {(\gamma,{\mathcal{B}})}\in \mathrm{Z} (\omega,g^{-1}\cdot A)$ with $\gamma\in \mathcal{U}^{\geq 1}$, we have
$$\theta_{s}({\mathcal{F}}_ {(\gamma,{\mathcal{B}})})={\mathcal{F}}_{(\alpha(g\cdot\gamma),\varphi(g,\gamma )\cdot  \mathcal{B})}.$$
\end{enumerate}
\end{prop}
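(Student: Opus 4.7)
The plan is to compute $\theta_s(\mathcal{F}) = (s \mathcal{F} s^*)\uparrow$ by conjugating the characteristic idempotents of $\mathcal{F}$ directly, and then match the resulting upward closure with one of the filter descriptions from Proposition \ref{prop2.11}. For $s = (\alpha, A, g, \omega)$ one first checks $s s^* = q_{(\alpha,A)}$ and $s^* s = q_{(\omega, g^{-1}\cdot A)}$, so the hypothesis $\mathcal{F} \in D^{s^*s} = \mathrm{Z}(\omega, g^{-1}\cdot A)$ provides the inclusion data ($g^{-1}\cdot A \in \mathcal{B}$ in case (1); $r(\gamma) \subseteq g^{-1}\cdot A$ in case (2), the latter coming from Proposition \ref{prop2.11}(1) applied to the empty stem of $\gamma$) that stops the multiplication rule from collapsing to $0$.

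For Part (1), I would pick a generator $q_{(\omega,B)}$ of $\mathcal{F}_{(\omega,\mathcal{B})}$ and, after replacing $B$ by $B \cap (g^{-1}\cdot A)$, assume $B \subseteq g^{-1}\cdot A$. Both multiplications $s\, q_{(\omega,B)}$ and $(s\, q_{(\omega,B)})s^*$ fall under the third (stabilizer) case of the rule with $\gamma = \beta = \omega$, giving
\[
s\, q_{(\omega,B)}\, s^* \;=\; (\alpha, g\cdot B, g, \omega)(\omega, g^{-1}\cdot A, g^{-1}, \alpha) \;=\; q_{(\alpha, g\cdot B)}.
\]
Since $g\cdot B \subseteq A \subseteq s(\alpha)$, this coincides with $q_{(\alpha,(g\cdot B)\cap s(\alpha))}$, and the upward closure as $B$ ranges over $\mathcal{B}$ matches $\mathcal{F}_{(\alpha, g\cdot\mathcal{B}\downarrow_{s(\alpha)})}$ by direct inspection of the filter description in Proposition \ref{prop2.11}(1).

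For Part (2), the left multiplication $s\cdot q_{(\gamma,B)}$ falls under case (1) of the multiplication rule with $\varepsilon = \gamma$ (valid because $r(\gamma) \subseteq g^{-1}\cdot A$ gives $g\cdot r(\gamma) \subseteq A$); the second coordinate simplifies via cocycle condition (b) of Definition \ref{defn2..7}, which extended inductively to paths yields $\varphi(g,\gamma)\cdot s(\gamma) \subseteq g\cdot s(\gamma)$ and hence $\varphi(g,\gamma)\cdot B \subseteq g\cdot s(\gamma)$. Right multiplication by $s^*$ then lands in case (2) of the rule, and the group coordinate collapses via $\varphi(g,\gamma)\varphi(g,\gamma)^{-1} = 1_G$, producing $s\, q_{(\gamma,B)}\, s^* = q_{(\alpha(g\cdot\gamma),\,\varphi(g,\gamma)\cdot B)}$; upward closure over $B \in \mathcal{B}$ then reproduces $\mathcal{F}_{(\alpha(g\cdot\gamma),\,\varphi(g,\gamma)\cdot\mathcal{B})}$, with the initial-segment idempotents of $\mathcal{F}_{(\gamma,\mathcal{B})}$ (those $q_{(\beta,A')}$ with $|\beta| < |\gamma|$) absorbed automatically by the upward closure. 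The main obstacle is purely bookkeeping: recognizing that right multiplication by $s^*$ in Part (2) falls into the second (longer) branch of the multiplication rule and verifying that its somewhat opaque third coordinate $g\,\varphi(h^{-1},\varepsilon)^{-1}$ collapses to $1_G$ after substitution; once this simplification is in hand, both parts are essentially mechanical.
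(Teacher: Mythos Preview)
The paper does not prove this proposition; it is quoted verbatim from \cite[Proposition 7.2]{ana02} and no argument is supplied here. Your direct computation of $s\,q\,s^*$ via the multiplication table of $\mathcal{S}_{G,\mathcal{U}}$, followed by identifying the upward closure with the filter description of Proposition~\ref{prop2.11}, is correct and is the natural route; the bookkeeping you flag (that the right multiplication in Part~(2) lands in case~2 and that the group coordinate collapses via $\varphi(g,\gamma)\varphi((g^{-1})^{-1},\gamma)^{-1}=1_G$) checks out.

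One small point worth making explicit: in Part~(2) you only conjugate the ``top'' idempotents $q_{(\gamma,B)}$ with $B\in\mathcal{B}$, and you dismiss the shorter-stem idempotents $q_{(\beta,A')}$ of $\mathcal{F}_{(\gamma,\mathcal{B})}$ as ``absorbed by the upward closure.'' That is fine, but the reason is that $q_{(\gamma,B)}\leq q_{(\beta,A')}$ and conjugation by $s$ is order-preserving, so $s\,q_{(\beta,A')}\,s^*$ already lies above $s\,q_{(\gamma,B)}\,s^*$; hence the upward closure of $\{s\,q_{(\gamma,B)}\,s^*:B\in\mathcal{B}\}$ is all of $(s\mathcal{F}s^*)\uparrow$. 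Similarly, in Part~(1) your replacement of $B$ by $B\cap(g^{-1}\cdot A)$ is legitimate because $g^{-1}\cdot A\in\mathcal{B}$ (from $\mathcal{F}_{(\omega,\mathcal{B})}\in\mathrm{Z}(\omega,g^{-1}\cdot A)$) makes these sets cofinal in $\mathcal{B}$, so the upward closure is unaffected.
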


\begin{prop}[{\cite[Proposition 7.3]{ana02}}]\label{prop2.14}
Let $s=(\alpha,A,g,\beta)$ be in $\mathcal{S}_{G,{\mathcal{U} }}$. For every ${\mathcal{F}}_{\beta x}\in \mathrm{Z} (\beta,g^{-1}\cdot A)$ with $x\in {\mathcal{U}}^\infty$, we have
$$\theta_{s}({\mathcal{F}}_ {\beta x})={\mathcal{F}}_ {\alpha(g\cdot x)}.$$
\end{prop}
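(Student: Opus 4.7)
The plan is to show that the ultrafilter $\theta_s(\mathcal{F}_{\beta x})$ coincides with $\mathcal{F}_{\alpha(g\cdot x)}$ by exhibiting a cofinal decreasing family of idempotents of $\mathcal{F}_{\alpha(g\cdot x)}$ inside $(s\mathcal{F}_{\beta x}s^*)\uparrow$. Both are ultrafilters in $\widehat{\mathcal{E}}_\infty(\mathcal{S}_{G,\mathcal{U}})$ (they are indexed by infinite paths via Proposition \ref{prop2.11}(2)), so any such filter-theoretic inclusion will force equality by maximality.

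Write $x=x_1x_2\cdots$ and, for each $n\geq 1$, set $\eta_n:=x_1\cdots x_n$. The idempotents $q_{(\beta\eta_n,\,s(\eta_n))}$ lie in $\mathcal{F}_{\beta x}$ and form a cofinal decreasing sequence (by Corollary \ref{cor2.5}), so it suffices to compute $s\,q_{(\beta\eta_n,\,s(\eta_n))}\,s^*$ and verify that the output belongs to $\mathcal{F}_{\alpha(g\cdot x)}$. The left product $s\cdot q_{(\beta\eta_n,\,s(\eta_n))}$ falls into the first (concatenation) case of the multiplication in $\mathcal{S}_{G,\mathcal{U}}$, since $\beta\eta_n=\beta\cdot\eta_n$ with $|\eta_n|\geq 1$ and the hypothesis $\mathcal{F}_{\beta x}\in D^{s^*s}=\mathrm{Z}(\beta,g^{-1}\cdot A)$ gives $g\cdot r(\eta_n)=g\cdot r(x_1)\subseteq A$. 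Right-multiplication by $s^*=(\beta,g^{-1}\cdot A,g^{-1},\alpha)$ then triggers the second (truncation) case with $\varepsilon=\eta_n$, and the group coordinate of the output collapses to $\varphi(g,\eta_n)\,\varphi(g,\eta_n)^{-1}=1_G$. The result is therefore an idempotent of the form
$$q_{\bigl(\alpha(g\cdot\eta_n),\,D_n\bigr)}, \qquad D_n\subseteq s\bigl(\alpha(g\cdot\eta_n)\bigr).$$

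To finish, one verifies that $\alpha(g\cdot\eta_n)$ is precisely the length-$(|\alpha|+n)$ initial segment of $\alpha(g\cdot x)$ and that the range of the next edge, namely $\varphi(g,\eta_n)\cdot r(x_{n+1})$, is contained in $D_n$; these are exactly the defining conditions for membership in $\mathcal{F}_{\alpha(g\cdot x)}$ from Proposition \ref{prop2.11}(2). Both requirements reduce, via $r(x_{n+1})\subseteq s(\eta_n)$ and $r(x_1)\subseteq s(\beta)$, to the equivariance axiom $\varphi(g,e)\cdot s(e)\subseteq g\cdot s(e)$ of Definition \ref{defn2..7}(3)(b), extended inductively along $\eta_n$ by means of the $1$-cocycle identity. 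This yields the required cofinal inclusion $(s\mathcal{F}_{\beta x}s^*)\uparrow\,\supseteq\mathcal{F}_{\alpha(g\cdot x)}$, and hence equality of the ultrafilters. The main obstacle is the combinatorial bookkeeping in the piecewise multiplication on $\mathcal{S}_{G,\mathcal{U}}$: applying cases $(1)$ and $(2)$ in succession produces nested intersections and iterated $\varphi$-twists, and the verification that the product simplifies to an idempotent sitting inside $\mathcal{F}_{\alpha(g\cdot x)}$ rests on the inductive extension of the equivariance inclusion from single edges to the finite paths $\eta_n$; everything else is purely filter-theoretic.
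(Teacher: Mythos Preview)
The paper does not supply a proof of this proposition; it is simply quoted from the companion paper \cite{ana02}. Your approach is correct and is the expected direct computation: conjugate a cofinal decreasing sequence of idempotents from $\mathcal{F}_{\beta x}$ by $s$, identify the results as idempotents $q_{(\alpha(g\cdot\eta_n),\,\varphi(g,\eta_n)\cdot s(\eta_n))}$ via the piecewise multiplication, and observe that these form a cofinal family inside $\mathcal{F}_{\alpha(g\cdot x)}$.

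One small imprecision: your opening claim that \emph{both} filters are ultrafilters ``indexed by infinite paths via Proposition~\ref{prop2.11}(2)'' is slightly circular for $\theta_s(\mathcal{F}_{\beta x})$, since its type is exactly what you are computing. This is harmless, however, because the maximality step only requires that $\theta_s(\mathcal{F}_{\beta x})$ be a \emph{proper} filter (it lies in $\widehat{\mathcal{E}}_{\mathrm{tight}}$ by construction of the action) and that $\mathcal{F}_{\alpha(g\cdot x)}$ be an ultrafilter (infinite-path filters are maximal, as one checks directly from the idempotent multiplication). Your inclusion $\mathcal{F}_{\alpha(g\cdot x)}\subseteq (s\mathcal{F}_{\beta x}s^*)\!\uparrow$ then already forces equality. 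Alternatively, one may invoke the standard fact that the partial action $\theta$ preserves ultrafilters.
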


\begin{prop}[{\cite[Proposition 7.5]{ana02}}]\label{prop7...5}
Let $s=(\alpha,A,g,\beta)$ be an element of $\mathcal{S}_{G,{\mathcal{U} }}$ with $|\beta |\geq 1 $. Then, for every tight filter ${\mathcal{F}}_ {(\beta \gamma,\mathcal{B})}\in \mathrm{Z}(\beta, g^{-1} \cdot A)$, we have
$$\theta_{s}({\mathcal{F}}_ {(\beta \gamma,\mathcal{B})})=
\begin{cases}
    {\mathcal{F}}_{\left(\omega, (g\cdot \mathcal B) \uparrow_{\mathcal {U}^0}\right)} & \text{ if} ~ \alpha=\gamma=\omega \\
    {\mathcal{F}}_ {\left(\alpha(g \cdot \gamma), \varphi(g,\gamma) \cdot \mathcal{B}\right)} & \text{ if} ~|\alpha|\geq 1 ~ \text{or} ~ |\gamma|\geq 1,
\end{cases} $$
where
$$(g\cdot \mathcal B) \uparrow_{\mathcal {U}^0}:=\{A \in \mathcal {U}^0 : \exists B \in \mathcal B \text{ such that } g\cdot B \subseteq A \}$$
is the (unique) tight filter in $\mathcal{U}^0$ containing $g \cdot \mathcal{B}$.
\end{prop}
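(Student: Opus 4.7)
The plan is to unfold the definition $\theta_s(\mathcal{F}_{(\beta\gamma,\mathcal{B})})=(s\mathcal{F}_{(\beta\gamma,\mathcal{B})}s^*)\uparrow$ by running the inverse semigroup multiplication on each generator of $\mathcal{F}_{(\beta\gamma,\mathcal{B})}$ from Proposition~\ref{prop2.11}, then matching the resulting up-closure against the two possible shapes in that same proposition. The generators split into the ``top'' idempotents $q_{(\beta\gamma,C)}$ with $C\in\mathcal{B}$, and the ``lower'' idempotents $q_{(\sigma,D)}$ with $\sigma$ a proper prefix of $\beta\gamma$ and $r((\beta\gamma)_{|\sigma|+1})\subseteq D$.

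For each generator $q_{(\tau,D)}$, one first computes $sq_{(\tau,D)}$ and then right-multiplies by $s^*$, using the three-case multiplication rule in $\mathcal{S}_{G,\mathcal{U}}$. When $\tau=\beta\gamma''$ with $|\gamma''|\geq 1$ (and $\gamma''$ a prefix of $\gamma$ or equal to $\gamma$), the first multiplication case applies with $\varepsilon=\gamma''$, after which the second case with $s^*$ yields $q_{(\alpha(g\cdot\gamma''),\varphi(g,\gamma'')\cdot D)}$, provided one uses the cocycle condition $\varphi(g,\gamma'')\cdot s(\gamma'')\subseteq g\cdot s(\gamma'')$ to reduce the intersections. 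When $\tau=\beta$ (so $\gamma''=\omega$), the third multiplication case gives $q_{(\alpha,A\cap g\cdot D)}$. When $|\tau|<|\beta|$, two applications of the second multiplication case, together with $\varphi(1_G,\cdot)=1_G$ noted after Definition~\ref{defn2..7}, collapse the product to $q_{(\alpha,A)}$.

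In the generic case $|\alpha|\geq 1$ or $|\gamma|\geq 1$, the idempotents just computed coincide with the standard generators of $\mathcal{F}_{(\alpha(g\cdot\gamma),\varphi(g,\gamma)\cdot\mathcal{B})}$ listed in Proposition~\ref{prop2.11}, so taking up-closures proves the second line of the formula. The delicate case is $\alpha=\gamma=\omega$: here $\alpha(g\cdot\gamma)=\omega$ has length zero, so the target filter must be of type~(1) in Proposition~\ref{prop2.11} with base living in $\mathcal{B}_{U^0}$ rather than $\mathcal{B}_{s(\beta)}$. One observes that $g^{-1}\cdot A\in\mathcal{B}$ (by the hypothesis $\mathcal{F}_{(\beta,\mathcal{B})}\in \mathrm{Z}(\beta,g^{-1}\cdot A)$), whence $A\cap g\cdot C=g\cdot(g^{-1}\cdot A\cap C)\in g\cdot\mathcal{B}$ for each $C\in\mathcal{B}$, and concludes that the up-closure of $\{q_{(\omega,A\cap g\cdot C)}:C\in\mathcal{B}\}\cup\{q_{(\omega,A)}\}$ in $\mathcal{E}(\mathcal{S}_{G,\mathcal{U}})$ is precisely $\{q_{(\omega,E)}:E\in(g\cdot\mathcal{B})\uparrow_{\mathcal{U}^0}\}$; all such $E$ are infinite because the $G$-action on $\mathcal{U}$ is by automorphisms and hence preserves cardinalities. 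I expect this identification of the ``dropped'' filter as $(g\cdot\mathcal{B})\uparrow_{\mathcal{U}^0}$ — rather than an apparent $g\cdot\mathcal{B}$, which is not a priori a filter in $\mathcal{B}_{U^0}$ — to be the main subtlety; the rest of the argument is mechanical but requires careful bookkeeping of the cocycle and source/range identities.
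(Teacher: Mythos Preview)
The paper does not prove this proposition: it is quoted verbatim from \cite[Proposition~7.5]{ana02} and used as a black box, so there is no ``paper's own proof'' to compare against. Your outline is nonetheless the natural one and matches how such statements are typically established: conjugate each generator $q_{(\tau,D)}\in\mathcal{F}_{(\beta\gamma,\mathcal{B})}$ by $s$ using the multiplication table of $\mathcal{S}_{G,\mathcal{U}}$, simplify via the cocycle identity $\varphi(g,\gamma'')\cdot s(\gamma'')\subseteq g\cdot s(\gamma'')$, and then recognise the resulting up-set in the classification of Proposition~\ref{prop2.11}. Your treatment of the three prefix cases and your identification of the degenerate case $\alpha=\gamma=\omega$ (where the base filter must be enlarged to $(g\cdot\mathcal{B})\!\uparrow_{\mathcal{U}^0}$ because $g\cdot\mathcal{B}$ sits inside $\mathcal{B}_{g\cdot s(\beta)}$ rather than $\mathcal{B}_{U^0}$) are both correct; in particular, the observation that $g^{-1}\cdot A\in\mathcal{B}$ forces $A\cap g\cdot C\in g\cdot\mathcal{B}$ is exactly the step needed to see why the up-closure lands in the stated filter.
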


Next, we need to determine fixed points and interior fixed points of the maps $\theta_s :D^{s^*s}\rightarrow D^{ss^*}$. Given any $s=(\alpha,A,g,\beta)$ in $\mathcal{S}_{G,{\mathcal{U} }}$, we know $D^{s^*s}=\mathrm{Z} (\beta,g^{-1}\cdot A)$ and $D^{ss^*}=\mathrm{Z} (\alpha,A)$, so in the case $\theta_s(\mathcal{F})=\mathcal{F}$, we have $\mathcal{F}\in \mathrm{Z} (\alpha,A)\cap \mathrm{Z} (\beta,g^{-1}\cdot A)$. Since such fixed points $\mathcal{F}$ are of the forms ${\mathcal{F}}_{\beta x}$ and ${\mathcal{F}}_ {(\beta \gamma,\mathcal{B})}$ by Proposition \ref{prop2.11} (see also the paragraph after it), we should consider several situations according to the cases ${\mathcal{F}}_{\beta x}$ and ${\mathcal{F}}_ {(\beta \gamma,\mathcal{B})}$ for ultrafilters.

The next definition is a generalization of \cite[Definition 3.4]{ana00} for ultragraphs.

\begin{defn}\label{defn4.1}
A $G$-{\it{cycle}} in $(G,{\mathcal{U}},\varphi)$ is a pair $(g,\gamma)\in G \times {\mathcal{U}}^{\geq 1}$ such that $g \cdot r(\gamma) \subseteq s(\gamma)$. We say a $G$-cycle $(g,\gamma)$ with $\gamma=e_1 \ldots e_n \in {\mathcal{U}}^{n}$ has an {\it{entrance}} if there exist an edge $e \in {\mathcal{U}}^{1}$ and $1\leq i \leq n$ such that $r(e)=r(e_i)$ but $e \neq e_i$.
\end{defn}

The main result of this section, Theorem \ref{thm4.8} below, will require every $G$-cycle to have an entrance. This condition is a generalization of Condition (L) for ultragraphs (c.f. \cite[Section 3]{ana00}).

Observe that, by the above definition, a $G$-cycle $(g,\gamma)$ with $\gamma=e_1 \ldots e_n \in {\mathcal{U}}^{n}$ has no entrances if and only if $s(e_i){\mathcal{U}}^{1}=\{e_{i+1}\}$ for all $1\leq i \leq n-1$, and also $s(e_n){\mathcal{U}}^{1}=\{ g \cdot {\gamma}_1\}$. In this case, we have $|s(e_i)|=1$ for $1\leq i \leq n$, because our ultragraph $\mathcal{U}$ has no sources.

Given a $G$-cycle $(g,\gamma)$, one may construct the infinite path
\begin{equation}\label{(4.2)}
x=\gamma_1 \gamma_2 \ldots
\end{equation}
such that $\gamma_1=\gamma$, $g_1=g$, and for $n \geq 2$ define  inductively $\gamma_{n+1}:=g_n\cdot \gamma_n$ and $g_{n+1}:=\varphi(g_n,\gamma_n)$. Note that, by the assumption in Definition \ref{defn2..7}(3)(b), we have
\begin{align*}
r(\gamma_{n+1})
&=g_n\cdot r(\gamma_n) \\
&\subseteq \varphi(g_{n-1},\gamma_{n-1}) \cdot s(\gamma_{n-1}) \\
&\subseteq g_{n-1}\cdot s(\gamma_{n-1}) \\
&=s(\gamma_{n}),
\end{align*}
so the infinite path (\ref{(4.2)}) is well-defined.

Let $(g,\gamma)$ be a $G$-cycle and $x$ the infinite path (\ref{(4.2)}). If $\beta \in{\mathcal{U}}^{\sharp}=\{\omega\}\cup \mathcal{U}^{\geq 1}$ with $r(\gamma)     \subseteq   s(\beta)$ and $s=(\beta\gamma,A,g,\beta)$ is an element of $\mathcal{S}_{G,{\mathcal{U} }}$, then Proposition \ref{prop2.14} implies that
\begin{align*}
{\theta}_s ({\mathcal{F}}_{\beta x})
&={\mathcal{F}}_{\beta \gamma(g \cdot x)} \\
&={\mathcal{F}}_{\beta \gamma(g \cdot \gamma_1)(\varphi(g,{\gamma}_1)\cdot \gamma_2)\cdots}  \\
&={\mathcal{F}}_{\beta( \gamma_1 \gamma_2 \cdots)}\\
&={\mathcal{F}}_{\beta x}.
\end{align*}
Therefore, ${\mathcal{F}}_{\beta x}$ is a fixed point for $\theta_s$.

\begin{lem}\label{lem4.2}
Let $s=(\alpha,A,g,\beta)$ be an element of $\mathcal{S}_{G,{\mathcal{U} }}$ such that $|\alpha|\neq|\beta|$ and let $\theta_s$ have a fixed point of the form ${\mathcal{F}}_{\beta x}$ for $x \in {\mathcal{U}}^{\infty}$.
\begin{enumerate}[$(1)$]
\item If $|\alpha|>|\beta|$, then we have $\alpha=\beta\gamma$ for some $\gamma \in  {\mathcal{U}}^{\geq 1}$, $(g,\gamma)$ is a $G$-cycle in $(G,\mathcal{U},\varphi)$ and $x$ is the unique infinite path constructed by $(g,\gamma)$ as in $(\ref{(4.2)})$.
 \item If $|\alpha|<|\beta|$, then $\beta=\alpha\gamma$ for some $\gamma \in  {\mathcal{U}}^{\geq 1}$, $(g^{-1},\gamma)$ is a $G$-cycle in $(G,\mathcal{U},\varphi)$ and $x$ is the unique infinite path constructed by $(g^{-1},\gamma)$.
\end{enumerate}
\end{lem}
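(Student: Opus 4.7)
The plan is to convert the fixed-point equation $\theta_s(\mathcal{F}_{\beta x})=\mathcal{F}_{\beta x}$ into an equality of infinite paths using Proposition \ref{prop2.14}, and then read off the required structure from comparing the two sides position by position. Applying Proposition \ref{prop2.14} to $s=(\alpha,A,g,\beta)$ and the fixed filter $\mathcal{F}_{\beta x}\in \mathrm{Z}(\beta,g^{-1}\cdot A)$ gives
\[
\mathcal{F}_{\beta x}=\theta_s(\mathcal{F}_{\beta x})=\mathcal{F}_{\alpha(g\cdot x)}.
\]
Since the description in Proposition \ref{prop2.11} shows that an ultrafilter of infinite-path type is uniquely determined by its path, this forces $\beta x=\alpha(g\cdot x)$ as elements of $\mathcal{U}^\infty$. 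The proof of both (1) and (2) then reduces to comparing initial segments.

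For case (1), the hypothesis $|\alpha|>|\beta|$ and the equality $\beta x=\alpha(g\cdot x)$ force $\alpha$ to begin with $\beta$, so $\alpha=\beta\gamma$ with $\gamma\in\mathcal{U}^{\geq 1}$, and cancelling $\beta$ on the left yields $x=\gamma(g\cdot x)$. From this last identity, taking ranges gives
\[
g\cdot r(\gamma)=g\cdot r(x)=r(g\cdot x)\subseteq s(\gamma),
\]
which is exactly the $G$-cycle condition for $(g,\gamma)$. Uniqueness of the cycle path is obtained by iteration: setting $y:=g\cdot x$, the relation $x=\gamma_1 y$ together with the cocycle-computed identity
\[
y=g\cdot(\gamma_1 y)=(g\cdot\gamma_1)\bigl(\varphi(g,\gamma_1)\cdot y\bigr)=\gamma_2(g_2\cdot y)
\]
produces the inductive step. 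Iterating $k$ times shows that the initial $kn$ edges of $x$ are precisely $\gamma_1\gamma_2\cdots\gamma_k$, and since $k$ is arbitrary, $x$ coincides with the unique infinite path $\gamma_1\gamma_2\cdots$ built from $(g,\gamma)$ as in~(\ref{(4.2)}).

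For case (2), the symmetric argument applies: $|\alpha|<|\beta|$ and $\beta x=\alpha(g\cdot x)$ force $\beta=\alpha\gamma$ with $\gamma\in\mathcal{U}^{\geq 1}$, and cancelling $\alpha$ gives $\gamma x=g\cdot x$, equivalently
\[
x=g^{-1}\cdot(\gamma x)=(g^{-1}\cdot\gamma)\bigl(\varphi(g^{-1},\gamma)\cdot x\bigr).
\]
Taking ranges in $\gamma x=g\cdot x$ yields $r(\gamma)=g\cdot r(x)$, hence $g^{-1}\cdot r(\gamma)=r(x)\subseteq s(\gamma)$, verifying that $(g^{-1},\gamma)$ is a $G$-cycle. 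The identification of $x$ with the path constructed from $(g^{-1},\gamma)$ then follows by the same iterative cocycle computation as in case~(1), applied to the relation $x=(g^{-1}\cdot\gamma)(\varphi(g^{-1},\gamma)\cdot x)$.

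The step that requires the most care is the inductive verification that $x$ is exactly the canonical infinite path produced by the $G$-cycle, rather than just a path on which some shift matches a power of the cycle. Making this precise means unpacking the inductive definition of $\gamma_{k+1}=g_k\cdot\gamma_k$ and $g_{k+1}=\varphi(g_k,\gamma_k)$ against repeated applications of the cocycle formula $g\cdot(\alpha\beta)=(g\cdot\alpha)\bigl(\varphi(g,\alpha)\cdot\beta\bigr)$ and the 1-cocycle identity; the bookkeeping here is the only non-routine part of the argument.
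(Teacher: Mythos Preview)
Your proof is correct and follows essentially the same route as the paper's: both invoke Proposition~\ref{prop2.14} and the uniqueness in Proposition~\ref{prop2.11} to obtain $\beta x=\alpha(g\cdot x)$, deduce $\alpha=\beta\gamma$ (resp.\ $\beta=\alpha\gamma$), and then iterate the relation $x=\gamma(g\cdot x)$ via the cocycle formula to identify $x$ with the path~(\ref{(4.2)}). The only cosmetic difference is that the paper verifies $g\cdot r(\gamma)\subseteq s(\gamma)$ via the inclusion $g\cdot r(x)\subseteq A\subseteq s(\alpha)$, whereas you read it off directly from the concatenation $x=\gamma(g\cdot x)$; both are valid.
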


\begin{proof}
To prove $(1)$, assume $|\alpha|>|\beta|$. If ${\mathcal{F}}_{\beta x}\in \mathrm{Z}(\beta,g^{-1}\cdot A) $ is a fixed point for $\theta_s$, then Proposition \ref{prop2.14} yields
\begin{equation}\label{(4..2)}
{\mathcal{F}}_{\beta x}={\theta}_s ({\mathcal{F}}_{\beta x})={\mathcal{F}}_{\alpha (g \cdot x)},
\end{equation}
so $\beta x={\alpha (g \cdot x)}$ by the unique representation of ultrafilters in Proposition \ref{prop2.11}. As $|\alpha|>|\beta|$, $\beta$ must be an initial subpath of $\alpha$, i.e. $\alpha=\beta\gamma$ for some $\gamma\in {\mathcal{U}}^{\geq 1}$. In particular, $\gamma$ is an initial segment of $x$ such that
$$g\cdot  r(\gamma)=g \cdot r(x)\subseteq A \subseteq s(\alpha)=s(\gamma).$$
This says that $(g,\gamma)$ is a $G$-cycle. Furthermore, the fact $\beta x={\alpha (g \cdot x)}=\beta  \gamma (g\cdot x)$ deduces $x=\gamma (g\cdot x)$. Write $x=\gamma_1 \gamma_2 \ldots$ with $|\gamma_i|=|\gamma|$ for all $i\geq 1$. Then $\gamma_1=\gamma$ and the decomposition
$$\gamma_1 \gamma_2 \ldots=x= \gamma (g\cdot x)= \gamma (g\cdot \gamma_1 )(\varphi(g,\gamma_1)\cdot \gamma_2 )\ldots$$
concludes that $x$ is precisely the infinite path constructed in $(\ref{(4.2)})$.

The proof of statement (2) is analogous to that of (1).
\end{proof}

\begin{lem}\label{lem4.3}
If $s=(\alpha,A,g,\beta)$ is an element of $\mathcal{S}_{G,{\mathcal{U} }}$ with $|\alpha|\neq|\beta|$, then $\theta_s$ has no fixed points of the form ${\mathcal{F}}_{(\gamma,\mathcal{B} )}$.
\end{lem}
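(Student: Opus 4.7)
The plan is to suppose, for contradiction, that $\theta_s$ admits a fixed point of the form $\mathcal{F}_{(\gamma,\mathcal{B})}$, and then use a length comparison on the first component of the resulting ultrafilter to force $|\alpha|=|\beta|$, contradicting the hypothesis. First, since $\mathcal{F}_{(\gamma,\mathcal{B})}\in D^{s^*s}=\mathrm{Z}(\beta,g^{-1}\cdot A)$, the explicit description of filters in Proposition \ref{prop2.11} applied to the membership $q_{(\beta,g^{-1}\cdot A)}\in \mathcal{F}_{(\gamma,\mathcal{B})}$ forces $\beta$ to be an initial subpath of $\gamma$, so I can write $\gamma=\beta\eta$ for some $\eta\in \mathcal{U}^{\sharp}$ (with $\eta=\omega$ allowed when $\gamma=\beta$).

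Next I would split into cases according to whether $\beta=\omega$ or $|\beta|\geq 1$. In the first case one has $|\alpha|\geq 1$, and Proposition \ref{prop2.13} computes $\theta_s(\mathcal{F}_{(\gamma,\mathcal{B})})$ as either $\mathcal{F}_{(\alpha,\,g\cdot\mathcal{B}\downarrow_{s(\alpha)})}$ (when $\gamma=\omega$) or $\mathcal{F}_{(\alpha(g\cdot\gamma),\,\varphi(g,\gamma)\cdot\mathcal{B})}$ (when $|\gamma|\geq 1$). The uniqueness of the representation in Proposition \ref{prop2.11} applied to the fixed-point identity then forces $\gamma=\alpha$ or $\gamma=\alpha(g\cdot\gamma)$, respectively, and comparing lengths yields $|\alpha|=0$ in both situations, a contradiction.

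In the second case $|\beta|\geq 1$, Proposition \ref{prop7...5} applies (with its ``$\gamma$'' replaced by my $\eta$). If $\alpha=\eta=\omega$, the image equals $\mathcal{F}_{(\omega,(g\cdot\mathcal{B})\uparrow_{\mathcal{U}^0})}$, whose first component $\omega$ together with the fixed-point identity forces $\beta=\omega$, contradicting $|\beta|\geq 1$. Otherwise the image equals $\mathcal{F}_{(\alpha(g\cdot\eta),\,\varphi(g,\eta)\cdot\mathcal{B})}$, so the fixed-point identity combined with $\gamma=\beta\eta$ and the uniqueness of representations yields $\beta\eta=\alpha(g\cdot\eta)$, whence $|\beta|=|\alpha|$, contradicting the hypothesis. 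I do not anticipate any real obstacle: the whole argument is a routine case analysis applying the already-established formulas for $\theta_s$, and the only mild subtlety is remembering to invoke Proposition \ref{prop2.13} when $\beta=\omega$ and to handle separately the edge case $\alpha=\eta=\omega$ inside the second case, since it is the one case where the first component of $\theta_s(\mathcal{F}_{(\gamma,\mathcal{B})})$ does not automatically retain the length $|\alpha|+|\eta|$.
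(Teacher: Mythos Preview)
Your proposal is correct and follows essentially the same approach as the paper: write $\gamma=\beta\eta$, apply the explicit formulas for $\theta_s$ from Propositions~\ref{prop2.13} and~\ref{prop7...5}, and use the uniqueness of the representation in Proposition~\ref{prop2.11} together with a length comparison to derive $|\alpha|=|\beta|$. The paper's version is slightly terser (it cites only Proposition~\ref{prop7...5} and handles the edge case $\alpha=\delta=\omega$ directly), whereas you split on $\beta=\omega$ versus $|\beta|\geq 1$ and invoke Proposition~\ref{prop2.13} in the former case; this is a minor organizational difference, not a different argument.
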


\begin{proof}
If ${\mathcal{F}}_{(\gamma,\mathcal{B} )}\in D^{s^*s}=\mathrm{Z}(\beta,g^{-1}\cdot A)$, we necessarily have $\gamma=\beta\delta$ for some $\delta \in {\mathcal{U} }^{\sharp}$. Note that Proposition \ref{prop7...5} says
$$\theta_{s}({\mathcal{F}}_ {(\gamma,{\mathcal{B}})})=
\begin{cases} {\mathcal{F}}_{\left(\omega, (g\cdot \mathcal B) \uparrow_{\mathcal {U}^0}\right)}\quad\quad\quad\qquad\;\;\text{ if}\;\;\alpha=\delta=\omega \\{\mathcal{F}}_ {(\alpha(g\cdot\delta),\varphi(g,\delta )\cdot {\mathcal{B}})}\qquad\qquad\qquad \text{otherwise}.
\end{cases}$$
So, in the case $\alpha=\delta=\omega$, if $\mathcal{F}_{(\gamma,\mathcal{B})}$ is a fixed point then $\gamma=\omega$ by uniqueness of the representation and therefore we also have $\beta=\omega$ (because $\gamma=\beta\delta$). Hence, this case could not occur by the assumption $|\alpha|\neq |\beta|$. Moreover, in the other case, since $|\alpha|\neq|\beta|$, we have $\beta\delta\neq \alpha(g\cdot \delta)$, whence the map $\theta_{s}$ does not fix ${\mathcal{F}}_ {(\gamma,{\mathcal{B}})}$.
\end{proof}

We conclude the following proposition for the fixed points of $\theta_s$ whenever $|\alpha|\neq|\beta|$.

\begin{prop}\label{prop4.4}
Suppose that every $G$-cycle in $(G,\mathcal{U},\varphi)$ has an entrance. Then for each $s=(\alpha,A,g,\beta)$ in $\mathcal{S}_{G,{\mathcal{U} }}$ with $|\alpha|\neq|\beta|$, $\theta_s$ has no interior fixed points.
\end{prop}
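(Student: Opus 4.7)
The plan is to combine Lemmas \ref{lem4.2} and \ref{lem4.3} with a perturbation argument that exploits the entrance of a $G$-cycle. By Lemma \ref{lem4.3}, $\theta_s$ has no fixed points of the form $\mathcal{F}_{(\gamma,\mathcal{B})}$, so any fixed point of $\theta_s$ must be of the form $\mathcal{F}_{\beta x}$ (I will treat the case $|\alpha|>|\beta|$; the opposite case is handled symmetrically via Lemma \ref{lem4.2}(2)). If $\theta_s$ has no fixed point at all, the conclusion is vacuous; otherwise Lemma \ref{lem4.2}(1) forces $\alpha=\beta\gamma$, identifies $(g,\gamma)$ as a $G$-cycle with $\gamma=e_1\cdots e_n$, and pins down $x=\gamma_1\gamma_2\cdots$ as the canonical infinite path from $(\ref{(4.2)})$. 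The goal is then to show this particular fixed point is not interior, by producing arbitrarily close to $\mathcal{F}_{\beta x}$ a tight filter $\mathcal{F}_y\in D^{s^*s}$ which is moved by $\theta_s$.

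For each $k\geq 0$ set $\tau_k:=\beta\gamma_1\cdots\gamma_k$, and note that any basic neighborhood of $\mathcal{F}_{\beta x}$ in $\mathcal{T}$ contains every $\mathcal{F}_y$ whose associated infinite path agrees with $\beta x$ on a sufficiently long initial segment. The key observation is that since each $\gamma_{k+1}$ arises from $\gamma$ through the iterated twisted action $\gamma_{j+1}=g_j\cdot\gamma_j$, $g_{j+1}=\varphi(g_j,\gamma_j)$, its $j$-th edge has the form $\mu_{k+1,j}\cdot e_j$ for a suitable $\mu_{k+1,j}\in G$ computed via the 1-cocycle property. Writing $e$ for the entrance edge of $(g,\gamma)$ at position $i$, the edge $f_{k+1}:=\mu_{k+1,i}\cdot e$ is therefore an entrance of $\gamma_{k+1}$ at position $i$: automorphisms preserve ranges and are injective, so $r(f_{k+1})=r((\gamma_{k+1})_i)$ and $f_{k+1}\neq (\gamma_{k+1})_i$. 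Regularity of $\mathcal{U}$ then yields an infinite path $z$ based at $s(f_{k+1})$, and I set $y:=\tau_k\,\eta_{k+1}\,f_{k+1}\,z$, where $\eta_{k+1}$ denotes the first $i-1$ edges of $\gamma_{k+1}$.

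By construction $\mathcal{F}_y$ agrees with $\mathcal{F}_{\beta x}$ on an initial segment of length $|\beta|+kn+i-1$, so $\mathcal{F}_y$ lies in any prescribed neighborhood of $\mathcal{F}_{\beta x}$ once $k$ is taken large. Also $\mathcal{F}_y\in D^{s^*s}=\mathrm{Z}(\beta,g^{-1}\cdot A)$, because $y$ begins $\beta e_1\cdots$ and membership of $\mathcal{F}_{\beta x}$ in $D^{s^*s}$ already forces $r(e_1)\subseteq g^{-1}\cdot A$. To verify that $\mathcal{F}_y$ is not fixed, Proposition \ref{prop2.14} gives $\theta_s(\mathcal{F}_y)=\mathcal{F}_{\alpha(g\cdot w)}$ where $y=\beta w$. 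A short induction using the 1-cocycle property telescopes $g\cdot(\gamma_1\gamma_2\cdots\gamma_k\eta_{k+1}f_{k+1}z)$ into $\gamma_2\gamma_3\cdots\gamma_{k+1}$ followed by a tail, yielding $\alpha(g\cdot w)=\beta\gamma_1\gamma_2\cdots\gamma_{k+1}(\mathrm{tail})$. Comparing with $y$ at position $|\beta|+kn+i$ gives $f_{k+1}$ on the $y$-side but $(\gamma_{k+1})_i$ on the $\alpha(g\cdot w)$-side, and these differ by our choice of entrance; hence $\theta_s(\mathcal{F}_y)\neq\mathcal{F}_y$.

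The main technical obstacle I anticipate is the bookkeeping for the cocycle-twisted action on long concatenations---specifically, verifying that $g\cdot w$ telescopes cleanly through the self-similar blocks $\gamma_1,\ldots,\gamma_{k+1}$ so that the single-edge perturbation introduced at level $k+1$ genuinely survives the application of $\theta_s$ and is not absorbed into the tail $z$. Once this telescoping is recorded carefully (by iterating the identity $g\cdot(ab)=(g\cdot a)(\varphi(g,a)\cdot b)$), the remainder of the argument is essentially combinatorial.
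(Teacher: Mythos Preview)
Your argument is correct, but it takes a different route from the paper's. The paper argues by contradiction and exploits the \emph{uniqueness} in Lemma~\ref{lem4.2} more directly: if $\mathcal{F}_{\beta x}$ were an interior fixed point, then some neighborhood $U\subseteq D^{s^*s}$ would consist entirely of fixed points; but Lemmas~\ref{lem4.2} and~\ref{lem4.3} say $\mathcal{F}_{\beta x}$ is the \emph{only} fixed point of $\theta_s$, so $U=\{\mathcal{F}_{\beta x}\}$, i.e.\ the point is isolated. Isolation then gives a cylinder $\mathrm{Z}(\beta\lambda,B)=\{\mathcal{F}_{\beta x}\}$, forcing $\beta x$ to have no entrance beyond $|\beta\lambda|$, which contradicts the hypothesis on $(g,\gamma)$. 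Your approach instead constructs the converging non-fixed filters $\mathcal{F}_y$ explicitly, branching off via the transported entrance $f_{k+1}=\mu_{k+1,i}\cdot e$ and then verifying $\theta_s(\mathcal{F}_y)\neq\mathcal{F}_y$ by the telescoping cocycle computation. The paper's route is shorter and avoids the cocycle bookkeeping entirely (uniqueness does the work), while yours is more constructive and makes fully explicit the propagation of the entrance through the blocks $\gamma_k$---a step the paper compresses into the phrase ``so does the $G$-cycle $(g,\gamma)$ because $x$ is constructed by $(g,\gamma)$.'' Both are valid; yours trades brevity for transparency.
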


\begin{proof}
Fixing such $s=(\alpha,A,g,\beta)$, we first assume $|\alpha|>|\beta|$. By Lemma \ref{lem4.3}, $\theta_s$ has no fixed points of finite type ${\mathcal{F}}_ {(\gamma,{\mathcal{B}})}$. Assume, by way of contradiction, that ${\mathcal{F}}_ {{\beta x}}$ is an interior fixed point for $\theta_s$. Then Lemma \ref{lem4.2} says that $\alpha=\beta\gamma$ for some $\gamma \in  {\mathcal{U}}^{\geq 1}$ so that $(g,\gamma)$ is a $G$-cycle and $x$ is uniquely constructed by $(g,\gamma)$ as in $(\ref{(4.2)})$. Hence, ${\mathcal{F}}_ {{\beta x}}$ is isolated and there exists a cylinder $\mathrm{Z}(\beta\lambda ,B)$ satisfying $\mathrm{Z}(\beta\lambda ,B)=\{ {\mathcal{F}}_ {{\beta x}}\}$. (It follows from Proposition \ref{prop2.11} that basically open sets containing $\mathcal{F}_{\beta x}$ are of the form of $Z(\beta \lambda,B)$ such that $\lambda$ is an initial subpath of $x$ and $B\subseteq s(\lambda)$.) But it follows that the infinite path $\beta x$ has no entrance, so does the $G$-cycle $(g,\gamma)$ because $x$ is constructed by $(g,\gamma)$. This is a contradiction; therefore, such a fixed point ${\mathcal{F}}_ {{\beta x}}$ does not exist. The proof for the case  $|\alpha|<|\beta|$ is analogous.
\end{proof}

Next, we want to describe the fixed points for elements $s=(\alpha,A,g,\beta)$ with $|\alpha|=|\beta|$.

\begin{lem}\label{lem4.5}
Let $s=(\alpha,A,g,\beta)$ be an element of $\mathcal{S}_{G,{\mathcal{U} }}$ with $|\alpha|=|\beta|$. If $\theta_s$ has a fixed point, then $\alpha=\beta$ and all fixed points of $\theta_s$ are of the following forms:
\begin{enumerate}[$(1)$]
\item ${\mathcal{F}}={\mathcal{F}}_{\alpha x}$ for $x \in (g^{-1}\cdot A){\mathcal{U} }^{\infty}$ satisfying $g \cdot x=x$.
 \item ${\mathcal{F}}={\mathcal{F}}_{(\alpha\gamma,\mathcal{B})}$ for $\gamma \in (g^{-1}\cdot A){\mathcal{U} }^{\sharp}$ satisfying $g \cdot \gamma=\gamma$ and $\varphi(g,\gamma)\cdot \mathcal{B}=\mathcal{B}$.
\end{enumerate}
\end{lem}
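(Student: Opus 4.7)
The plan is to exploit the uniqueness of the two normal forms for tight filters given in Proposition \ref{prop2.11} together with the explicit formulas for $\theta_s$ supplied by Propositions \ref{prop2.13}, \ref{prop2.14}, and \ref{prop7...5}. Since any fixed point $\mathcal{F}$ of $\theta_s$ must lie in $D^{s^*s}=\mathrm{Z}(\beta,g^{-1}\cdot A)$, Proposition \ref{prop2.11} forces $\mathcal{F}$ to be either of the infinite type $\mathcal{F}_{\beta x}$ (with $r(x)\subseteq g^{-1}\cdot A$) or of the finite type $\mathcal{F}_{(\beta\gamma,\mathcal{B})}$ for some $\gamma\in\mathcal{U}^{\sharp}$. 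I would then compute $\theta_s(\mathcal{F})$ in each case and compare it with $\mathcal{F}$ via the uniqueness of the representation.

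In the infinite case, Proposition \ref{prop2.14} gives $\theta_s(\mathcal{F}_{\beta x})=\mathcal{F}_{\alpha(g\cdot x)}$. The uniqueness part of Proposition \ref{prop2.11} then forces $\beta x=\alpha(g\cdot x)$, and because $|\alpha|=|\beta|$ one immediately reads off $\alpha=\beta$ and $g\cdot x=x$, which is precisely form (1). In the finite case I would split according to whether $|\beta|\geq 1$ or $\beta=\omega$. When $|\beta|\geq 1$, Proposition \ref{prop7...5} applies and its sub-case $\alpha=\gamma=\omega$ is automatically excluded; thus $\theta_s(\mathcal{F}_{(\beta\gamma,\mathcal{B})})=\mathcal{F}_{(\alpha(g\cdot\gamma),\varphi(g,\gamma)\cdot\mathcal{B})}$, and uniqueness together with a length comparison produces $\alpha=\beta$, $g\cdot\gamma=\gamma$, and $\varphi(g,\gamma)\cdot\mathcal{B}=\mathcal{B}$, matching form (2). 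When $\beta=\omega$ (hence $\alpha=\omega$), I would invoke Proposition \ref{prop2.13} instead: its sub-case $\gamma=\omega$ gives $\mathcal{B}=g\cdot\mathcal{B}\downarrow_{U^0}$, while its sub-case $\gamma\in\mathcal{U}^{\geq 1}$ yields $g\cdot\gamma=\gamma$ together with $\varphi(g,\gamma)\cdot\mathcal{B}=\mathcal{B}$.

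The main subtlety I expect is the edge case $\alpha=\beta=\omega$, where Proposition \ref{prop7...5} does not directly apply and the output of Proposition \ref{prop2.13} must be recognized as the same assertion as form (2). This reduces to noting that $\varphi(g,\omega)=g$ by our convention and that $s(\omega)=U^0$ makes the downward restriction $\downarrow_{U^0}$ trivial, so the identity $\mathcal{B}=g\cdot\mathcal{B}\downarrow_{U^0}$ collapses to $\varphi(g,\omega)\cdot\mathcal{B}=\mathcal{B}$. Beyond this piece of bookkeeping the argument is a transparent application of uniqueness of tight-filter normal forms.
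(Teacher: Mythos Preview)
Your proposal is correct and follows essentially the same approach as the paper: use the uniqueness of the normal forms in Proposition~\ref{prop2.11} together with the formulas of Propositions~\ref{prop2.13}, \ref{prop2.14}, and \ref{prop7...5}, then compare lengths. The paper's proof is slightly terser---it treats the finite-type case in one stroke by citing Propositions~\ref{prop2.13} and \ref{prop7...5} simultaneously---whereas you spell out the edge case $\alpha=\beta=\omega$ explicitly and verify that $g\cdot\mathcal{B}\downarrow_{U^0}=\varphi(g,\omega)\cdot\mathcal{B}$; this extra bookkeeping is sound and arguably clarifies a point the paper leaves implicit.
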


\begin{proof}
We know that ultrafilters in $D^{s^*s}=\mathrm{Z}(\beta,g^{-1}\cdot A)$ are of the forms ${\mathcal{F}}_{\beta x}$ and ${\mathcal{F}}_{(\beta\gamma,\mathcal{B})}$. In the case $\theta_s({\mathcal{F}}_{\beta x})={\mathcal{F}}_{\beta x}$, Proposition \ref{prop2.14} implies that ${\mathcal{F}}_{\beta x}={\mathcal{F}}_{\alpha(g \cdot x)}$, so ${\beta x}={\alpha(g \cdot x)}$. As assumed $|\alpha|=|\beta|$, we obtain $\alpha=\beta$ and $g\cdot x=x$, concluding statement (1).

In the other case, if $\theta_s({\mathcal{F}}_{(\beta\gamma,\mathcal{B})})={\mathcal{F}}_{(\beta\gamma,\mathcal{B})}$, then Propositions \ref{prop2.13} and \ref{prop7...5} yield ${\mathcal{F}}_{(\beta\gamma,\mathcal{B})}={\mathcal{F}}_{(\alpha(g\cdot \gamma),\varphi(g,\gamma)\cdot\mathcal{B})}$, and hence $\beta\gamma=\alpha(g\cdot \gamma)$ and $\mathcal{B}=\varphi(g,\gamma) \cdot \mathcal{B}$. Again, since $| \alpha |=|  \beta  |$, it follows that $\alpha=\beta$ and $\gamma=g \cdot \gamma$ proving statement (2).
\end{proof}

Finally, we provide conditions for a self-similar ultragraph $(G,\mathcal{U},\varphi)$ to ensure that every interior fixed point of maps $\theta_s:D^{s^* s}    \rightarrow   D^{ss^* }$ is a trivial one. They will generalize conditions $(1),(3),(4)$ of \cite[Theorem 3.11]{apl00} together with those of \cite[Theorem 14.10]{bro14}.

In the following, we use the notations $A{\mathcal{U}^\infty}:=\{x\in \mathcal{U}^\infty : r(x)\subseteq A       \}$ and
$$A{\mathcal{U}^{\leq \infty}}:=A \mathcal{U}^\infty \cup \{\alpha\in \mathcal{U}^{\geq 1} : r(\alpha)\subseteq A   \text{ and } |s(\alpha)|=\infty\}.$$
Moreover, for $g\in G$, we say a path $\alpha$ with $|\alpha|\geq 1$ is {\it strongly fixed by $g$} if $g\cdot \alpha=\alpha$ and $\varphi(g,\alpha)=1_G$.

\begin{defn}[{\bf Condition $(*)$}]\label{defn4.9}
We say that $(G,\mathcal{U},\varphi)$ satisfies {\it{Condition $(*)$}} if for $g\in G\setminus \{1_G\}$ and $A\in{\mathcal{U} }^0$ satisfying $g\cdot x=x$ for all $x\in A {\mathcal{U} }^\infty$, we then have
\begin{enumerate}[$(1)$]
\item every $x\in {A{\mathcal{U} }}^{\leq \infty}$ has an initial subpath $\alpha$ (i.e. $x=\alpha y$) such that it is strongly fixed by $g$;
\item in the case $|A|=\infty$, there are no ultrafilters in ${\mathcal{U} }^0$  (with $\cap$ as the meet and $\subseteq$ as the ordering for $\mathcal{U}^0$) containing $A$.
\end{enumerate}
\end{defn}

We also need a simple lemma:

\begin{lem}\label{lem4.7}
Let $\mathcal{B}$ be a filter in ${\mathcal{U} }^0$ and $g \in G$. If $\mathcal{B}$ contains a set $A$ such that $g\cdot v=v$ for all $v\in A$, then we have $g\cdot \mathcal{B}=\mathcal{B}$.
\end{lem}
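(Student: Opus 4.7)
The plan is to argue directly from the filter axioms together with the fact that $g$ fixes $A$ pointwise. First I observe that whenever $g\cdot v=v$ holds for every $v\in A$, then also $g^{-1}\cdot v=v$ for every $v\in A$; this is the symmetry that will let me prove both inclusions $g\cdot \mathcal{B}\subseteq \mathcal{B}$ and $\mathcal{B}\subseteq g\cdot \mathcal{B}$ by the same argument.

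For the inclusion $g\cdot \mathcal{B}\subseteq \mathcal{B}$, I take an arbitrary $B\in \mathcal{B}$ and consider $A\cap B$. Since $\mathcal{B}$ is a filter and $A,B\in\mathcal{B}$, we have $A\cap B\in \mathcal{B}$. Because every vertex of $A\cap B$ lies in $A$ and is therefore fixed by $g$, the action respects intersection and gives $g\cdot(A\cap B)=(g\cdot A)\cap(g\cdot B)=(A\cap B)\cap(g\cdot B)$, so in particular $A\cap B\subseteq g\cdot B$. By \cite[Lemma 3.1]{ana02}, $g\cdot B$ again lies in ${\mathcal{U}}^0$, and upward closure of $\mathcal{B}$ inside ${\mathcal{U}}^0$ together with $A\cap B\in\mathcal{B}$ yields $g\cdot B\in\mathcal{B}$. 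Hence $g\cdot \mathcal{B}\subseteq \mathcal{B}$.

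For the reverse inclusion, I apply the same argument with $g^{-1}$ in place of $g$, using that $g^{-1}$ also fixes $A$ pointwise. This gives $g^{-1}\cdot \mathcal{B}\subseteq \mathcal{B}$, which applying $g\cdot$ to both sides translates into $\mathcal{B}\subseteq g\cdot \mathcal{B}$. Combining the two inclusions produces $g\cdot \mathcal{B}=\mathcal{B}$. No step here is delicate; the only thing worth being careful about is to ensure that the sets $g\cdot B$ remain in ${\mathcal{U}}^0$ so that upward closure of $\mathcal{B}$ can be invoked, and that is exactly what \cite[Lemma 3.1]{ana02} provides.
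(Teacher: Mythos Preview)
Your argument is correct and follows essentially the same route as the paper's proof: intersect an arbitrary $B\in\mathcal{B}$ with the fixed set $A$, observe that $g\cdot(A\cap B)=A\cap B$, deduce $A\cap B\subseteq g\cdot B$, and conclude by upward closure. Two small remarks: in your displayed chain the middle term should read $A\cap(g\cdot B)$ rather than $(A\cap B)\cap(g\cdot B)$ (since $g\cdot A=A$, not $A\cap B$), though your conclusion $A\cap B\subseteq g\cdot B$ is unaffected; and you are slightly more careful than the paper in making the reverse inclusion via $g^{-1}$ explicit, whereas the paper leaves it implicit.
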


\begin{proof}
Picking an arbitrary set $B \in \mathcal{B}$, we will show that $g\cdot B \in \mathcal{B}$. Since $\mathcal{B}$ is a filter, we have $\emptyset \neq A \cap B \in \mathcal{B}$. As we assumed $g\cdot (A \cap B )=A \cap B $, then
$$A \cap B=g\cdot (A \cap B ) \subseteq g\cdot B .$$
So, the fact $A\cap B\in \mathcal{B}$ concludes $g\cdot B\in \mathcal{B}$ because $\mathcal{B}$ is a filter.
\end{proof}

We are now in the position to state and prove the main result of this section.

\begin{thm}\label{thm4.8}
Let $(G,\mathcal{U},\varphi)$ be a self-similar ultragraph. If every $G$-cycle in $(G,\mathcal{U},\varphi)$ has an entrance and $(G,\mathcal{U},\varphi)$ satisfies Condition $(*)$, then $\mathcal{G}_{\mathrm{tight}}(\mathcal{S}_{G,{\mathcal{U} }})$ is effective.
\end{thm}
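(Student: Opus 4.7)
The plan is to invoke Exel's characterization \cite[Theorem 4.7]{kat08}, which says $\mathcal{G}_{\mathrm{tight}}(\mathcal{S}_{G,\mathcal{U}})$ is effective if and only if, for every $s\in\mathcal{S}_{G,\mathcal{U}}$, every interior fixed point of $\theta_s:D^{s^*s}\to D^{ss^*}$ is trivial (i.e.\ $[s,\mathcal{F}]=[e,\mathcal{F}]$ for some idempotent $e\in\mathcal{F}$, equivalently there exists an idempotent $e\leq s$ with $e\in\mathcal{F}$). Fix $s=(\alpha,A,g,\beta)\in\mathcal{S}_{G,\mathcal{U}}$ and suppose $\mathcal{F}$ is an interior fixed point of $\theta_s$. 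The plan splits according to the relationship between $|\alpha|$ and $|\beta|$.

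For the case $|\alpha|\neq|\beta|$, Proposition \ref{prop4.4} (which uses the hypothesis that every $G$-cycle has an entrance) already rules out interior fixed points, so there is nothing to prove. For $|\alpha|=|\beta|$, Lemma \ref{lem4.5} forces $\alpha=\beta$; moreover, if $g=1_G$ then $s=q_{(\alpha,A)}$ is itself idempotent and the claim is immediate. So assume $\alpha=\beta$ and $g\neq 1_G$. Lemma \ref{lem4.5} describes the possible fixed points as either $\mathcal{F}_{\alpha x}$ with $g\cdot x=x$ or $\mathcal{F}_{(\alpha\gamma,\mathcal{B})}$ with $g\cdot\gamma=\gamma$ and $\varphi(g,\gamma)\cdot\mathcal{B}=\mathcal{B}$, and the strategy in both cases is the same: use the interior hypothesis to find a basic neighborhood $\mathcal{V}\subseteq\mathcal{T}$ of $\mathcal{F}$ on which $\theta_s$ is the identity, then use Condition $(*)$ to extract a strongly $g$-fixed initial subpath of the relevant path that witnesses triviality of $\mathcal{F}$.

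Concretely, in the infinite-path case $\mathcal{F}=\mathcal{F}_{\alpha x}$, choose a basic neighborhood $\mathrm{Z}(\alpha\lambda,C)\subseteq\mathcal{V}$. Applying Proposition \ref{prop2.14} to every $\mathcal{F}_{\alpha\lambda y}\in\mathrm{Z}(\alpha\lambda,C)$ with $y\in C\mathcal{U}^\infty$, the fixed-point condition forces $g\cdot\lambda=\lambda$ and $h\cdot y=y$ for every $y\in C\mathcal{U}^\infty$, where $h:=\varphi(g,\lambda)$. If $h=1_G$, then $\lambda$ is already strongly fixed by $g$, so the direct computation $(\alpha,A,g,\alpha)\cdot q_{(\alpha\lambda,C)}=q_{(\alpha\lambda,C)}$ (using the multiplication rule in $\mathcal{S}_{G,\mathcal{U}}$ and $r(\lambda)\subseteq g^{-1}\cdot A$) exhibits the required idempotent below $s$. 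If $h\neq 1_G$, then Condition $(*)$(1) applied to $h$ and $C$ gives, for every extension in $C\mathcal{U}^{\leq\infty}$, an initial subpath $\mu$ strongly fixed by $h$; concatenating via the $1$-cocycle identity yields an initial subpath $\lambda\mu$ of $x$ that is strongly fixed by $g$, and $e:=q_{(\alpha\lambda\mu,r(\mu))}\in\mathcal{F}$ satisfies $se=e$.

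For the finite-type case $\mathcal{F}=\mathcal{F}_{(\alpha\gamma,\mathcal{B})}$, the analogous neighborhood analysis, combined with Proposition \ref{prop7...5} and Lemma \ref{lem4.7}, provides a set $C\in\mathcal{B}$ (necessarily with $|C|=\infty$ by the description of such filters) on which the induced element $h=\varphi(g,\gamma)$ fixes every infinite path emitted from $C$. Here Condition $(*)$(2) enters: it forbids any ultrafilter of $\mathcal{U}^0$ containing $C$, which contradicts extending the filter $\mathcal{B}$ to an ultrafilter of $\mathcal{U}^0$ containing $C$, unless $h=1_G$; then Condition $(*)$(1) again produces a strongly $g$-fixed extension of $\gamma$ which gives the required idempotent below $s$. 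The main obstacle I anticipate is the bookkeeping in this finite-type case: one must be careful that the basic neighborhood chosen really translates into the hypothesis of Condition $(*)$ (in particular, verifying that the fixed-point condition must hold on \emph{all} of $C\mathcal{U}^{\leq\infty}$, not merely on some dense subset), and the cocycle identity $\varphi(gh,\mu)=\varphi(g,h\cdot\mu)\varphi(h,\mu)$ must be used carefully to lift a strongly $h$-fixed tail into a strongly $g$-fixed initial segment of the original path.
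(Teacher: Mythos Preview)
Your approach is essentially the same as the paper's: both invoke \cite[Theorem 4.7]{kat08}, reduce via Proposition~\ref{prop4.4} and Lemma~\ref{lem4.5} to $s=(\alpha,A,g,\alpha)$, pick a basic neighborhood $\mathrm{Z}(\alpha\lambda,B)$ of fixed points, set $h=\varphi(g,\lambda)$, and then apply the two parts of Condition~$(*)$ to $h$ and $B$ exactly as you describe. Two minor slips to clean up: the witnessing idempotent should be $q_{(\alpha\lambda\mu,\,s(\mu))}$ rather than $q_{(\alpha\lambda\mu,\,r(\mu))}$, and in the finite-type case when $h=1_G$ the path $\gamma$ is itself already strongly fixed by $g$, so no further appeal to Condition~$(*)$(1) is needed there.
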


\begin{proof}
In light of \cite[Theorem 4.7]{kat08}, effectiveness of $\mathcal{G}_{\mathrm{tight}}(\mathcal{S}_{G,{\mathcal{U} }})$ is equivalent to the action $\theta:\mathcal{S}_{G,{\mathcal{U} }}   \curvearrowright \mathcal{T}$  being topologically free. That is, for any  $s \in \mathcal{S}_{G,{\mathcal{U} }}$, every interior fixed point of $\theta_s$ is a trivial fixed point.\footnote{A fixed point $\mathcal{F}$ for $\theta_s$ is called {\it trivial} whenever there exists an idempotent $q\leq s$ such that $\theta_q(\mathcal{F})= \mathcal{F}$.} So, pick an arbitrary element $s=(\alpha,A,g,\beta)$ in $\mathcal{S}_{G,{\mathcal{U} }}$ and assume that $\mathcal{F}\in D^{s^* s}=\mathrm{Z}(\beta,g^{-1}\cdot A)$ is an  interior fixed point for $\theta_s$. Proposition \ref{prop4.4} and Lemma \ref{lem4.5} together turn out $\alpha=\beta$, i.e. $s=(\alpha,A,g,\alpha)$. Thus there exists a cylinder $\mathrm{Z}(\alpha\lambda,B)$ containing $\mathcal{F}$ such that $\theta_s$ fixes all tight filters in $\mathrm{Z}(\alpha\lambda,B)$. Note that Lemma \ref{lem4.5} above describes all fixed points of $\theta_s$, which are of the forms $\mathcal{F}_{(\alpha\lambda\gamma,\mathcal{B}  )}$ and $\mathcal{F}_{\alpha\lambda  x  }$. Hence, it suffices to show that such points are trivial ones.

Note that for the tight filters of the form $\mathcal{F}_{\alpha\lambda  x  }\in \mathrm{Z}(\alpha\lambda,B)$, Lemma \ref{lem4.5}(1) yields $g\cdot (\lambda x)=\lambda x$, or equivalently $g\cdot \lambda=\lambda$ and $\varphi(g,\lambda)\cdot x=x$. If we write $h:=\varphi(g,\lambda)$, it says that $h\cdot x=x$ for all $x\in {\mathcal{U}^\infty}$ with $r(x)\subseteq B$, and we may apply properties (1) and (2) in Condition $(*)$ for such $h \in G$ and $B\in {\mathcal{U} }^0$.

First, property (2) in Condition $(*)$ deduces that there are no tight filters of the form $\mathcal{F}_{(\alpha\lambda,\mathcal{B}  )}$ in $\mathrm{Z}(\alpha\lambda,B)$. To prove this claim, assume on the contrary that there exists some $\mathcal{F}_{(\alpha\lambda,\mathcal{B}  )}$ in $\mathrm{Z}(\alpha\lambda,B)$, where $\mathcal{B}$ is an ultrafilter in $\mathcal{B} _{s(\lambda)}$ containing $B$. In this case, we must have $|B|=\infty$ by Proposition \ref{prop2.11}. According to \cite[Proposition 7.5]{ana02}, we may enlarge $\mathcal{B}$ to the ultrafilter
$$\mathcal{B}\uparrow_{{\mathcal{U}^0}}:=\{ C\in \mathcal{U}^0 : \exists C'\in \mathcal{B}\; \text{such that }  C'\subseteq C\}$$
in $\mathcal{U}^0$ containing $\mathcal{B}$. On the other hand, we have $h\cdot v=v$ for all $v\in B$ because every $v\in B$ receives an infinite path $x\in B \mathcal{U}^{\infty}$, and $h\cdot x=x$ turns out
$$ h\cdot v=h \cdot r(x)=r(h\cdot x)=r(x)=v. $$
Thus, Lemma \ref{lem4.7} implies $h\cdot \mathcal{B}\uparrow_{{\mathcal{U}^0}}=\mathcal{B}\uparrow_{{\mathcal{U}^0}}$, contradicting the property (2) in Condition $(*)$.

Now, we will show that all tight filters of the forms $\mathcal{F}_{(\alpha\lambda\gamma,\mathcal{B}  )}$ with $\gamma \in {\mathcal{U}}^{\geq 1}$ and $\mathcal{F}_{\alpha\lambda  x  }$ in $\mathrm{Z}(\alpha\lambda,B)$ are trivial. In case  $\mathcal{F}_{(\alpha\lambda\gamma,\mathcal{B}  )}\in \mathrm{Z}(\alpha\lambda,B)$, then $\gamma \in B{\mathcal{U} }^{ \geq 1}$ satisfies $g\cdot (\lambda\gamma)=\lambda\gamma$ by Lemma \ref{lem4.5}(2). It follows $(g\cdot \lambda)(\varphi(g,\lambda)\cdot \gamma) =\lambda \gamma$, and since $|  g\cdot \lambda |=|  \lambda  |$, we get $\varphi(g,\lambda)\cdot \gamma=\gamma$ (i.e., $h \cdot \gamma=\gamma$). By property $(1)$ in Condition $(*)$, $\gamma$ would have an initial subpath that is strongly fixed by $h$, so is $\gamma$ itself. It is straightforward to check that $q:=q_{( \alpha\lambda\gamma ,s(\gamma)  )}\leq s$ and $\theta_q$ fixes $\mathcal{F}_{(\alpha\lambda\gamma,\mathcal{B})}$ by Propositions \ref{prop2.13} and \ref{prop7...5}. Therefore, $\mathcal{F}_{(\alpha\lambda\gamma,\mathcal{B}  )}$ is a trivial fixed point for $\theta_s$.

The argument for the tight filters $\mathcal{F}_{\alpha\lambda  x}$ in $\mathrm{Z}(\alpha\lambda,B)$ is analogous. Indeed, by the property (1) in Condition $(*)$, $x$ has an initial subpath, say $\eta$, which is strongly fixed by $h$. Then $q:=q_{(  \alpha\lambda\eta ,s(\eta) )}\leq s$ and Proposition \ref{prop2.14} implies that $\theta_q(\mathcal{F}_{\alpha\lambda  x})=\mathcal{F}_{\alpha\lambda  x}$. Consequently, we have already shown that all tight filters in $\mathrm{Z}(\alpha\lambda,B)$ are trivial ones, completing the proof.
\end{proof}


\section{Examples}\label{sec5}
In this section, we verify conditions in Theorem \ref{thm3.4} and \ref{thm4.8} for three examples of self-similar ultragraphs.
\begin{example}
Let $(\mathbb{Z},{\mathcal{U}},\varphi)$ be the self-similar ultragraph in \cite[Example 3.4]{ana02}. It is the ultragraph

\begin{equation}\label{ex5.1}
\begin{tikzpicture}[scale=0.8]
\node (...) at (-1.2,0) {$\cdots$};
\node (v_{-1}) at (0,0) {$v_{-1}$};
\node (v_0) at (1.5,0) {$v_0$};
\node (v_1) at (3,0) {$v_1$};
\node (v_2) at (4.5,0) {$v_2$};
\node (v_3) at (6,0) {$v_3$};

\node (00) at (7.5,0) {$\cdots$};

\path[->] (00) edge[bend right]   (v_{-1});
\path[->] (v_0) edge[bend right]  (v_{-1});
\path[->] (v_1) edge[bend right] node[above=5pt]  {$e_{-1}$}(v_{-1});
\path[->] (v_2) edge[bend right]  (v_{-1});
\path[->] (v_2) edge[bend right]  (v_1);
\path[->] (00)  edge[bend left]   (v_0);
\path[->] (v_1) edge[bend left]   (v_0);
\path[->] (v_2) edge[bend left] node[above=-17pt]  {$e_0$} (v_0);
\path[->] (v_3) edge[bend right] node[above=2pt]  {$e_1$} (v_1);
\path[->] (v_3) edge[bend right]   (v_{-1});
\path[->] (00) edge[bend right]   (v_1);
\path[->] (v_3) edge[bend left]   (v_0);
\end{tikzpicture}
\end{equation}
where $ U^0=\{v_i : i\in \mathbb{Z}\}$ and ${\mathcal{U}}^1=\{e_i :  i\in \mathbb{Z}\}$ with the range map $r(e_i)=\{v_i\}$ and the source map $s(e_i)=\{v_j : j>  i\}$ for $i\in \mathbb{Z}$. The action $\mathbb{Z}\curvearrowright {\mathcal{U}}$ is defined by
$$n \cdot v_i=v_{i+n} \;\;\text{     and     }\; \;n \cdot e_i=e_{i+n}\;\; (\forall \; i,n \in \mathbb{Z}),$$
and let $\varphi:  \mathbb{Z}\times {\mathcal{U}}^1 \rightarrow \mathbb{Z}$ be a $1$-cocycle satisfying
$$\varphi(n,e_i)\cdot s(e_i) \subseteq n \cdot s(e_i)\;\; (\forall \; i,n \in \mathbb{Z})$$
(you may consider the trivial $1$-cocycle $\varphi(n,e_i)=n$ for instance).
We see in figure (\ref{ex5.1}) that every pair $(n,e_i) \in \mathbb{N}\times {\mathcal{U}}^1$ with $n>  i$, is a $G$-cycle in $(\mathbb{Z},{\mathcal{U}},\varphi  )$ having entrances. Moreover, for each $n\neq 0$, we have $n \cdot v_i  \neq v_i$, and hence $n \cdot x \neq x$ for all $x\in {\mathcal{U}}^\infty$. Therefore, the self-similar ultragraph $(\mathbb{Z},{\mathcal{U}},\varphi  )$ satisfies Condition $(*)$, and together with the fact that every $G$-cycle has an entrance, Theorem \ref{thm4.8} implies that the groupoid $\mathcal{G}_{\mathrm{tight}}({S}_{\mathbb{Z},{\mathcal{U} }})$ is effective (or equivalently, the action $\theta:\mathcal{S}_{\mathbb{Z},{\mathcal{U} }}\curvearrowright \widehat{\mathcal{E}}_{\mathrm{tight}}(\mathcal{S}_{\mathbb{Z},\mathcal{U}})$ is topologically free).

Furthermore, given every $v_n\in U^0$ and $x=\alpha_1 \alpha_2 \ldots \in {\mathcal{U}}^\infty$, if $r(x)=v_m$ then $(m-n)\cdot v_n=v_m=r(x)$. It follows that $(\mathbb{Z},{\mathcal{U}},\varphi  )$ is $\mathbb{Z}$-cofinal in the sense of Definition \ref{defn3..1}, and therefore $\mathcal{G}_{\mathrm{tight}}(\mathcal{S}_{\mathbb{Z},\mathcal{U}})$ is a minimal groupoid by Theorem \ref{thm3.4}.
\end{example}

\begin{example}\label{ex5.2}
Let $(\mathbb{Z}_{2},{\mathcal{U}},\varphi  )$ be a self-similar ultragraph in which $\mathbb{Z}_{2}=(\{0,1\},+)$ and ${\mathcal{U}}$ is the ultragraph

\begin{center}
\begin{tikzpicture}[scale=0.6]

\node (v) at (-1.5,0) {$v$};
\node (w) at (2.5,0) {$w$};

\path[<-] (v) edge[bend left] node [above=1pt]{$e$} (w);
\path[<-] (w) edge[bend left] node [above=-17pt]{$f$} (v);

\draw[<-] (w) ..controls (4,-1) and (4,1) .. node[right=1pt] {$f$} (w);

\draw[->] (v) ..controls (-3,-1) and (-3,1) .. node[left=1pt] {$e$} (v);
\end{tikzpicture}
\end{center}
with the action $\mathbb{Z}_{2}\curvearrowright \mathcal{U}$ by $0 \cdot \alpha=\alpha$ for all $\alpha \in U^0 \cup {\mathcal{U}}^1$ and
$$1\cdot v=w , \;\;\; \;\;\;\;1\cdot w=v,$$
$$1\cdot e=f \;\;\text{and} \;\; 1\cdot f=e.$$
Since there are edges from $v$ to $w$ and from $w$ to $v$, $(\mathbb{Z}_{2},{\mathcal{U}},\varphi  )$ is $G$-cofinal. Moreover, we have $1\cdot v\neq v$  and $1 \cdot w\neq w$, so $1\cdot x\neq x$ for all $x\in {\mathcal{U}}^{\infty}$. Hence, $(\mathbb{Z}_{2},{\mathcal{U}},\varphi  )$ trivially satisfies Condition $(*)$. Since the $G$-cycles $(0,e),(0,f),(1,e),(1,f)$ have entrances, Theorems \ref{thm3.4} and \ref{thm4.8} conclude that the groupoid $\mathcal{G}_{\mathrm{tight}}({S}_{\mathbb{Z}_{2},{\mathcal{U} }})$ is minimal and effective.
\end{example}

From now on, and until the end of this section, we consider the following self-similar ultragraph with an arbitrary, but fixed, 1-cocycle $\varphi$.

\begin{example}\label{ex5.3}
Let $\mathcal{U}$ be the ultragraph
\begin{center}
\begin{tikzpicture}[scale=0.8]

\node (v0) at (-1.5,0) {$v_0$};
\node (v1) at (3.5,0) {$v_1$};
\node (w) at (1,0) {$w$};

\path[<-] (v0) edge[bend left] node [above=1pt]{$e_0$} (v1);
\path[<-] (v1) edge[bend left] node [above=-17pt]{$e_1$} (v0);

\draw[->] (v0) ..controls (-.25,0) and (-.25,0) .. node[above=0pt]{$f$} (w);
\draw[->] (v1) ..controls (2.25,0) and (2.25,0) .. node[above=0pt]{$f$} (w);

\draw[<-] (v1) ..controls (5,-1) and (5,1) .. node[right=1pt] {$e_1$} (v1);
\draw[->] (v0) ..controls (-3,-1) and (-3,1) .. node[left=1pt] {$e_0$} (v0);
\end{tikzpicture}
\end{center}
If $G$ is the additive group $(\mathbb{Z},+)$, define the action $\mathbb{Z}\curvearrowright \mathcal{U}$ by $n\cdot a=a$ for $a\in \{w,f\}$ and
$$\begin{array}{cc}
  n\cdot v_0=v_{[n]_2}, & n\cdot v_1=v_{[n+1]_2}, \\
  n\cdot e_0=e_{[n]_2}, & n\cdot e_1=v_{[n+1]_2},
\end{array}
$$
where $[n]_2\in \{0,1\}$ is denoted for $n$ modulo $2$. We fix a 1-cocycle $\varphi$ such that $(\mathbb{Z},\mathcal{U},\varphi)$ is a self-similar ultragraph.
\end{example}

As seen in the figure, $\mathcal{U}$ contains two loops $e_0$, $e_1$ having entrances (each of $e_0$ and $e_1$ is an entrance for the other), and every $G$-cycle in $\mathcal{U}$ has an entrance. Moreover, $(\mathbb{Z},\mathcal{U},\varphi)$ is $G$-cofinal, so it remains to verify Condition $(*)$ which depends on the values of $\varphi$. In order to do this, we assume a number $n\in \mathbb{Z}\setminus \{0\}$ and a set $A=\{v_0\}$ are given such that $n\cdot x=x$ for all $x\in \{v_0\}\mathcal{U}^\infty$. (Note that the argument for $A=\{v_1\}$ is analogous with same computations.) Then, one may compute that:

\begin{lem}\label{lem5.4}
Consider the self-similar ultragraph $(\mathbb{Z},\mathcal{U},\varphi)$ of Example \ref{ex5.3}. If $n\in \mathbb{Z}\setminus \{0\}$ is given such that $n \cdot x=x$ for all $x\in  \mathcal{U}^\infty$ with $r(x)=\{v_0\}$, then
\begin{enumerate}[$(1)$]
  \item $n$ and $\varphi(n,\alpha)$ are even integers for every $\alpha\in \mathcal{U}^{\geq 1}$ with $r(\alpha)=\{v_0\}$.
  \item $\varphi(1,e_0)+\varphi(1,e_1)$ is an even integer.
  \item For every $\alpha\in \mathcal{U}^{\geq 1}$ with $r(\alpha)=\{v_0\},\{v_1\}$, we have
  $$\varphi(n,\alpha)=n t^{|\alpha|},$$
  where $t:=\frac{1}{2}(\varphi(1,e_0)+\varphi(1,e_1))$.
\end{enumerate}
\end{lem}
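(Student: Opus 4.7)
The plan is to establish (1), (2), and (3) in sequence, relying on basic 1-cocycle computations in $\mathbb{Z}$. Two preliminary observations will be used throughout. First, any path $\alpha \in \mathcal{U}^{\geq 1}$ with $r(\alpha) \in \{\{v_0\}, \{v_1\}\}$ contains no occurrence of $f$: since $r(f) = \{w\}$ and $w$ lies in no source, $f$ cannot follow any other edge, and a path starting with $f$ has range $\{w\}$, so $\alpha$ is a word in $\{e_0, e_1\}$. Second, using the 1-cocycle property together with $1 \cdot e_0 = e_1$ and $1 \cdot e_1 = e_0$, a direct computation gives
$$\varphi(2, e_i) = \varphi(1, 1 \cdot e_i) + \varphi(1, e_i) = \varphi(1, e_0) + \varphi(1, e_1) = s$$
for $i = 0, 1$, where $s := \varphi(1, e_0) + \varphi(1, e_1)$; induction on $k$ (using that even integers fix $e_0, e_1$) then yields $\varphi(2k, e_i) = k s$ for every $k \in \mathbb{Z}$.

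For part (1), given any $\alpha$ with $r(\alpha) = \{v_0\}$, one forms the infinite path $x := \alpha\,e_0 e_0 \cdots$, which lies in $\{v_0\}\mathcal{U}^\infty$ since $\{v_0\} = r(e_0) \subseteq \{v_0, v_1\} = s(\alpha)$. The hypothesis $n \cdot x = x$ decomposes as $(n \cdot \alpha)(\varphi(n, \alpha) \cdot e_0 e_0 \cdots) = \alpha\,e_0 e_0 \cdots$; comparing prefixes of length $|\alpha|$ gives $n \cdot \alpha = \alpha$ (whence $n \cdot e_0 = e_0$ from the leading edge of $\alpha$, so $n$ is even), and comparing the next edge gives $\varphi(n, \alpha) \cdot e_0 = e_0$, forcing $\varphi(n, \alpha)$ to be even.

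For part (2), set $c_j := \varphi(n, e_0^j)$, which is even for every $j \geq 1$ by (1). Since each $c_j$ is even, the 1-cocycle identity combined with the preliminary formula yields $c_{j+1} = \varphi(c_j, e_0) = (c_j/2) s$, so an induction gives the integer identity $c_j = n (s/2)^j$. Writing $n = 2^m u$ with $u$ odd (where $m \geq 1$ by (1)), one obtains $c_m = u s^m$; since $c_m$ must be even and $u$ is odd, $s$ itself is even, so $t := s/2 \in \mathbb{Z}$. Part (3) then follows by induction on $|\alpha|$: the base case $|\alpha| = 1$ reads $\varphi(n, e_i) = (n/2) s = n t$; and for $\alpha = \alpha' e_i$ with $|\alpha| \geq 2$, the inductive hypothesis $\varphi(n, \alpha') = n t^{|\alpha'|}$ together with the evenness of $n t^{|\alpha'|}$ gives
$$\varphi(n, \alpha) = \varphi(\varphi(n, \alpha'), e_i) = \varphi(n t^{|\alpha'|}, e_i) = \tfrac{n t^{|\alpha'|}}{2}\, s = n t^{|\alpha|},$$
closing the induction.

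The main difficulty will be in part (2), where the infinite family of parity constraints from (1) must be distilled into the single conclusion that $s$ is even; the key observation is that the cocycle recursion causes $c_j = n (s/2)^j$ to lose one 2-adic power per step unless $s$ itself contributes a factor of $2$. All remaining arguments are direct applications of the 1-cocycle identity.
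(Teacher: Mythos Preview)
Your proof is correct and follows essentially the same route as the paper's: both establish the key identity $\varphi(2k,e_i)=k\big(\varphi(1,e_0)+\varphi(1,e_1)\big)$, iterate it along a path using part~(1) to obtain $\varphi(n,\alpha)=n\,s^{|\alpha|}/2^{|\alpha|}$, and then conclude that $s=\varphi(1,e_0)+\varphi(1,e_1)$ is even because these values stay even at every step. Your $2$-adic argument (writing $n=2^m u$ and looking at $c_m=us^m$) makes explicit what the paper leaves as a one-line remark, but the underlying idea is identical.
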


\begin{proof}
(1). Observe that $n\cdot x=x$ implies
$$x=(-n+n)\cdot x=(-n)\cdot (n\cdot x)=(-n)\cdot x,$$
so, without loss of generality, we may assume $n>0$. It follows also $(n\cdot \alpha)(\varphi(n,\alpha)\cdot y)=\alpha y$, whence $\varphi(n,\alpha)\cdot y=y$ for every decomposition $x=\alpha y$ of $x$. In particular, we have $n\cdot r(x)=r(x)$, which forces $n\in 2 \mathbb{Z}$ by the definition of $\mathbb{Z} \curvearrowright U^0$. Similarly, $\varphi(n,\alpha)\cdot y=y$ implies $\varphi(n,\alpha) \cdot r(y)=r(y)$, and thus $\varphi(n,\alpha)\in 2\mathbb{Z}$ for every $\alpha\in \{v_0\}\mathcal{U}^{\geq 1}$.

(2) and (3). Let us denote $t_0:=\varphi(1,e_0)$ and $t_1:=\varphi(1,e_1)$ for convenience. Then, apply the 1-cocycle property (Definition \ref{defn2..7}(3)(a)) to get
\begin{align*}
\varphi(2,e_0)&=\varphi(1+1,e_0)\\
&=\varphi(1,1\cdot e_0)+ \varphi(1,e_0)\\
&=\varphi(1,e_1)+\varphi(1,e_0)\\
&=t_0+t_1,
\end{align*}
and similarly, $\varphi(2,e_1)=t_0+t_1$. Moreover, for each $i=0,1$,
\begin{align*}
\varphi(2+2,e_i)&=\varphi(2,2\cdot e_i)+ \varphi(2,e_i)\\
&=\varphi(2, e_i)+ \varphi(2,e_i)\\
&=2(t_0+t_1).
\end{align*}
Continuing this process gives
\begin{equation}\label{eq5.2}
\varphi(2k,e_i)=k\varphi(2,e_i)=k(t_0+t_1)
\end{equation}
for every $k\geq 1$ and $i=0,1$.

Now, we want to compute $\varphi(n,\alpha)$ for paths $\alpha=\alpha_1 \alpha_2 \ldots \alpha_k$, with $\alpha_i\in \{e_0,e_1\}$.\footnote{Here, we should assume $\alpha_1=e_0$, but we can extend the case to any $\alpha_1\in \{e_0,e_1\}$ because $\varphi(n,e_0)=\varphi(n,e_1)$.}
Write $n=2k_0$ by statement (1). Then, the equality (\ref{eq5.2}) for $k=k_0$ says $\varphi(n,e_i)=k_0(t_0+t_1)$,
which is an even number by statement (1). If we write $k_0(t_0+t_1)=2k_1$, then for each $\alpha=\alpha_1\alpha_2$ with $\alpha_1,\alpha_2\in \{e_0,e_1\}$, we have
$$\varphi(n,\alpha)=\varphi(\varphi(n,\alpha_1), \alpha_2)=\varphi(2k_1,\alpha_2)\overset{(\ref{eq5.2})}{=} k_1(t_0+t_1)=\frac{k_0}{2}(t_0+t_1)^2,$$
which is again even by (1). Similarly, if $\frac{k_0}{2}(t_0+t_1)^2=2k_2$, then
$$\varphi(n,\alpha_1\alpha_2\alpha_3)=k_2(t_0+t_1)= \frac{k_0}{2^2}(t_0+t_1)^3,$$
and we may inductively obtain
\begin{equation}\label{eq5.3}
\varphi(n,\alpha_1 \ldots \alpha_k)=\frac{k_0}{2^{k-1}}(t_0+t_1)^k.
\end{equation}
As the number in (\ref{eq5.3}) is even for every $k\geq 1$, then so is $t_0+t_1$ as well, and consequently statements (2) and (3) are proved.
\end{proof}

Note that, by similarity, if we have $0\neq n\in \mathbb{Z}$ such that $n \cdot x=x$ for all $x\in \mathcal{U}^\infty$ with $r(x)=\{v_1\}$, then statements (1)-(3) in the above lemma also hold by replacing $v_0$ with $v_1$.

 Using the above lemma, we can precisely determine when the self-similar ultragraph of Example \ref{ex5.3} satisfies Condition $(*)$.

\begin{prop}\label{prop5.5}
The self-similar ultragraph $(\mathbb{Z},\mathcal{U},\varphi)$ of Example \ref{ex5.3} satisfies Condition $(*)$ if and only if
$$\varphi(1,e_0)+\varphi(1,e_1)\in \{0\} \cup (2\mathbb{Z}+1).$$
\end{prop}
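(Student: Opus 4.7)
The plan is to exploit the finiteness of $U^0 = \{v_0,v_1,w\}$ together with Lemma \ref{lem5.4}, splitting on the behaviour of $T := \varphi(1,e_0) + \varphi(1,e_1)$.

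First I will record two reductions. Since $U^0$ is finite, every member of $\mathcal{U}^0$ is finite, so condition (2) of Condition $(*)$ is vacuously satisfied, and $A \mathcal{U}^{\leq \infty} = A \mathcal{U}^\infty$ for every $A$. Because $w$ is a sink and $f$ is the only edge with range $\{w\}$, every infinite path consists only of edges in $\{e_0, e_1\}$. Consequently the only $A \in \mathcal{U}^0$ for which $A\mathcal{U}^\infty$ is nonempty are those meeting $\{v_0, v_1\}$, and, by the symmetry swapping $v_0$ and $v_1$, it suffices to verify condition (1) of Condition $(*)$ for $A \in \{\{v_0\}, \{v_1\}, \{v_0,v_1\}\}$, with the first two cases being symmetric and the third subsumed by the first two.

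For the direction $(\Leftarrow)$, assume $T \in \{0\} \cup (2\mathbb{Z}+1)$. If $T$ is odd, Lemma \ref{lem5.4}(2) forces any $n \in \mathbb{Z}\setminus\{0\}$ satisfying $n \cdot x = x$ for all $x \in \{v_0\}\mathcal{U}^\infty$ to make $T$ even, a contradiction; so no such $n$ exists and $(*)$ is vacuously true. If $T = 0$, any such $n$ must be even by Lemma \ref{lem5.4}(1), and Lemma \ref{lem5.4}(3) then yields $\varphi(n, \alpha) = n(T/2)^{|\alpha|} = 0$ for every $\alpha \in \mathcal{U}^{\geq 1}$ with $r(\alpha) \in \{\{v_0\}, \{v_1\}\}$. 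Hence the initial edge $\alpha_1$ of any $x \in A\mathcal{U}^\infty$ satisfies $n \cdot \alpha_1 = \alpha_1$ and $\varphi(n, \alpha_1) = 0$, so $\alpha_1$ is strongly fixed by $n$ and condition (1) of $(*)$ holds.

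For the direction $(\Rightarrow)$, I would argue contrapositively. Suppose $T$ is even and nonzero; write $T = 2s$ with $s \neq 0$, and take $n = 2$ with $A = \{v_0\}$. Using the identity $\varphi(2k, e_i) = kT$ obtained in the proof of Lemma \ref{lem5.4}, an easy induction on $j$ gives $\varphi(2, \alpha_1 \cdots \alpha_j) = 2 s^j$ for every word $\alpha_1 \cdots \alpha_j \in \{e_0, e_1\}^j$; since these values are all even, $2 \cdot x = x$ for every $x \in \{v_0\}\mathcal{U}^\infty$, confirming the hypothesis of Condition $(*)$. On the other hand, $2 s^j \neq 0$ for every $j \geq 1$ because $s \neq 0$, so no initial subpath of any such $x$ is strongly fixed by $2$, and condition (1) of Condition $(*)$ fails. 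The main obstacle I anticipate is organizing the dichotomy cleanly, in particular invoking Lemma \ref{lem5.4} both to rule out unwanted $n$ in the odd case and to compute the cocycle explicitly in the $T = 0$ case; once that framework is in place, the forward direction reduces to the single explicit computation above.
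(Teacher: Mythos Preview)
There is a genuine gap in your reduction. You assert that ``the only $A \in \mathcal{U}^0$ for which $A\mathcal{U}^\infty$ is nonempty are those meeting $\{v_0, v_1\}$'', but this is false: since $r(f)=\{w\}$ and $s(f)=\{v_0,v_1\}$, every $fy$ with $y\in\{v_0,v_1\}\mathcal{U}^\infty$ lies in $\{w\}\mathcal{U}^\infty$. (The vertex $w$ is indeed a sink in the sense that $w\notin s(e)$ for any edge $e$, but in this paper paths are read from range to source, so $w$ is a legitimate \emph{starting} vertex for infinite paths.) Hence Condition~$(*)$ must also be verified for $A=\{w\}$ and for sets containing $w$, and your $(\Leftarrow)$ argument is incomplete as written.

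The paper fills this gap by observing that $n\cdot(fy)=fy$ is equivalent to $\varphi(n,f)\cdot y=y$ (because $n\cdot f=f$), and then splitting on the value $m:=\varphi(n,f)$: if $m$ is odd the hypothesis fails; if $m=0$ then $f$ itself is strongly fixed by $n$; and if $m\in 2\mathbb{Z}\setminus\{0\}$ one is reduced to the already--treated $\{v_0\},\{v_1\}$ case with $m$ playing the role of $n$. Once that reduction is in place, the rest of your argument---using Lemma~\ref{lem5.4}(2) to dispose of odd $T$, Lemma~\ref{lem5.4}(3) to obtain $\varphi(n,\alpha)=n(T/2)^{|\alpha|}=0$ when $T=0$, and the explicit witness $n=2$ for the contrapositive direction---agrees with the paper's proof.
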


\begin{proof}
Observe that each infinite path $x\in \mathcal{U}^\infty$ with $r(x)=\{w\}$ is of the form $x=fy$ for $y\in \{v_0,v_1\}\mathcal{U}^\infty$. So, for any $n\in \mathbb{Z}\setminus \{0\}$, $n\cdot x=x$ implies $(n\cdot f)(\varphi(n,f) \cdot y)=fy$ or equivalently, $\varphi(n,f)\cdot y=y$ because $n \cdot f=f$. In the case $\varphi(n,f)\in 2\mathbb{Z}+1$, we have $\varphi(n,f) \cdot r(y) \neq r(y)$ by definition, which contradicts $\varphi(n,f) \cdot y=y$. Moreover, if $\varphi(n,f)=0$, then $x$ has the prefix $f$ strongly fixed by $n$, so property (1) of Definition \ref{defn4.9} is satisfied.\footnote{Notice that property (2) in Definition \ref{defn4.9} is trivially satisfied because $|A|<\infty$ for all $A\in \mathcal{U}^0$.} Hence, checking Condition ($*$) in Definition \ref{defn4.9} for such $x=fy\in \{w\}\mathcal{U}^\infty$ and $n\in \mathbb{Z}\setminus \{0\}$ reduces to the case $\varphi(n,f)\in 2\mathbb{Z}\setminus \{0\}$ and $\varphi(n,f)\cdot y=y$ for all $y\in \{v_0,v_1\}\mathcal{U}^\infty$.

Therefore, in order to verify Condition $(*)$, it suffices to do it for $n\in 2 \mathbb{Z}\setminus \{0\}$ and the sets $A=\{v_0\},\{v_1\}$. Let $A=\{v_0\}$ and let $n \in 2\mathbb{Z}\setminus \{0\}$ be given such that $n \cdot x=x$ for all $x\in \mathcal{U}^\infty$ with $r(x)=\{v_0\}$. Then, by Lemma \ref{lem5.4}(3), every such $x$ has an initial subpath strongly fixed by $n$ if and only if $\varphi(1,e_0)+\varphi(1,e_1)=0$ (in the case, $e_0, e_1$ are strongly fixed by $n$ as well, cf. eq. (\ref{eq5.2})).

Furthermore, if $\varphi(1,e_0)+\varphi(1,e_1)$ is an odd integer, then Lemma \ref{lem5.4}(2) implies that the condition ``$n\cdot x=x$ for all $x\in \{v_0\}\mathcal{U}^\infty$" does not occur for all $n\in \mathbb{Z}\setminus \{0\}$, and $(\mathbb{Z}, \mathcal{U},\varphi)$ trivially satisfies Condition $(*)$ in this case.

A same argument could be said for $A=\{v_1\}$, and hence the result is deduced.
\end{proof}


\section{A special case: crossed product and simplicity}\label{sec6}

In this section, we consider self-similar ultragraphs $(G,{\mathcal{U}},\varphi  )$ in which the $1$-cocycle $\varphi$ is the trivial one, that is $\varphi(g,\alpha):=g$ for all $g\in G$ and $\alpha \in {\mathcal{U}}^*$. We will denote such 1-cocycle by $\iota$. In this case, we show that the action $G\curvearrowright \mathcal{U}$ induces a $C^*$-dynamical system $\eta:G \curvearrowright C^*({\mathcal{U}})$, and then the $C^*$-algebra $\mathcal{O}_{G,{\mathcal{U} }}$ is isomorphic to the crossed product $C^*({\mathcal{U}})\rtimes_{\eta} G$. This deduces, in particular, a result for the simplicity of $\mathcal{O}_{G,{\mathcal{U} }}$.

\subsection{A unitary representation of $G$}\label{subs6.1}

Let $(G,\mathcal{U},\varphi)$ be a self-similar ultragraph as in Definition \ref{defn2..7}. In this short subsection, we briefly provide another, but equivalent, definition for the $C^*$-algebra $\mathcal{O}_{G,\mathcal{U}}$ which is more analogous to that of $\mathcal{O}_{G,E}$ in the self-similar graph setting \cite{bro14,exe18}. Using this definition, we may define a unitary $*$-representation of $G$ on the multiplier algebra $M(\mathcal{O}_{G,\mathcal{U}})$.

\begin{defn}\label{defn6.1}
Let $\mathcal{D}_{G, \mathcal{U}}$ be the {\it universal unital $C^*$-algebra} generated by a family
$$\{s_e,p_A:A\in \mathcal{U}^0,e\in \mathcal{U}^1\} \cup \{u_g:g\in G\}$$
satisfying the following properties:
\begin{enumerate}[(1)]
  \item $\{s_e,p_A:A\in \mathcal{U}^0,e\in \mathcal{U}^1\}$ is a Cuntz-Krieger $\mathcal{U}$-family;
  \item $u: G \rightarrow \mathcal{D}_{G, \mathcal{U}}$, defined by $g\mapsto u_g$, is a unitary $*$-representation of $G$;
  \item \begin{enumerate}[(a)]
          \item $u_g p_A= p_{g \cdot A} u_g$, and
          \item $u_g s_e= s_{g \cdot e} u_{\varphi(g,e)}$ for all $g\in G$, $A\in \mathcal{U}^0$, and $e\in \mathcal{U}^1$.
        \end{enumerate}
\end{enumerate}
Then, we define the $C^*$-subalgebra
\begin{equation}\label{eq6.1-1}
\widetilde{\mathcal{O}}_{G,\mathcal{U}}:= \overline{\mathrm{span}}\{s_\alpha p_A u_g s_\beta^*: \alpha,\beta\in \mathcal{U}^*, A\in \mathcal{U}^0, A\subseteq s(\alpha)\cap g \cdot s(\beta)\}
\end{equation}
of $\mathcal{D}_{G, \mathcal{U}}$, where $s_\alpha:=p_A$ if $\alpha=A\in \mathcal{U}^0$ and $s_\alpha:=s_{e_1}\ldots s_{e_n}$ if $\alpha=e_1\ldots e_n\in \mathcal{U}^n$ for $n\geq 1$.
\end{defn}

The following proposition links Definition \ref{defn6.1} above to Definition \ref{defn2...3} for $\mathcal{O}_{G,\mathcal{U}}$.

\begin{prop}\label{prop6.2}
Let
$$\{S_e,P_A,V_{A,g}:A\in\mathcal{U}^0,e\in \mathcal{U}^1 \text{ and } g\in G\}$$
be a $(G,\mathcal{U})$-family in a $C^*$-algebra $\mathcal{A}$ as in Definition \ref{defn2...3}. Then, for every $g\in G$, the series $\sum_{v\in U^0}V_{\{v\},g}$ converges to an element $V_g$ in the multiplier algebra $M(\mathcal{A})$. Moreover, we have
\begin{enumerate}[$(1)$]
  \item the map $g \mapsto V_g$ is a unitary $*$-representation of $G$ into $M(\mathcal{A})$, and furthermore,
  \item \begin{enumerate}[(a)]
  \item $V_g P_A=P_{g \cdot A} V_g$ for all $A\in \mathcal{U}^0$, and
  \item $V_g S_e= S_{g \cdot e} V_g$ for all $e \in \mathcal{U}^{1}$.
        \end{enumerate}
\end{enumerate}
\end{prop}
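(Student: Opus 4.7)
The plan is to construct $V_g$ as the strict limit in $M(\mathcal{A})$ of the net of partial sums $U_F^g := \sum_{v \in F} V_{\{v\},g}$, indexed by finite subsets $F \subseteq U^0$ directed by inclusion, and then verify the claimed properties by passing generator-level identities to the strict limit. The first key fact is that $\{V_{\{v\},g}\}_{v \in U^0}$ is a family of partial isometries with pairwise orthogonal source and range projections. Indeed, relations $(3)$ and $(4)$ of Definition \ref{defn2...3} give $V_{\{v\},g}^* V_{\{v\},g} = V_{\{g^{-1}\cdot v\}, g^{-1}} V_{\{v\},g} = V_{\{g^{-1}\cdot v\}, 1_G} = P_{\{g^{-1}\cdot v\}}$ and $V_{\{v\},g} V_{\{v\},g}^* = P_{\{v\}}$, while for $v \neq w$ the disjointness of $\{v\}$ and $\{w\}$ forces $V_{\{v\},g} V_{\{w\},g}^* = V_{\{v\}\cap\{w\},1_G} = 0$ and similarly $V_{\{v\},g}^* V_{\{w\},g} = 0$. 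Iterating $(4)$ also identifies the partial sum as $U_F^g = V_{F,g}$, giving the uniform bound $\|U_F^g\| \leq 1$.

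Using this uniform bound together with density of the span in (\ref{(2.1)}), strict-Cauchyness of the net $(U_F^g)$ in $M(\mathcal{A})$ reduces to verification on a spanning element $a = s_\alpha V_{C,h} s_\beta^*$. When $|\alpha| \geq 1$, the range $r(\alpha)$ is a singleton and relation $(5)$ of Definition \ref{defn2...3} forces $V_{\{v\},g} s_\alpha = 0$ except for the unique $v$ with $\{v\} = g \cdot r(\alpha)$, so $U_F^g \cdot a$ becomes stationary once $F$ contains this vertex; a dual argument handles $a \cdot U_F^g$ when $|\beta| \geq 1$. The strict limit then furnishes the desired element $V_g \in M(\mathcal{A})$, with explicit formulas for $V_g \cdot a$ and $a \cdot V_g$ on generators. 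The main obstacle will be extending this strict-Cauchy argument to spanning elements such as $a = V_{C,h} s_\beta^*$ where $C \in \mathcal{U}^0$ is infinite: the naive norm estimate for the tail $V_{g \cdot C \setminus F,\, gh}\, s_\beta^*$ does not decay, since $P_{g \cdot C \setminus F}$ is a non-zero projection of norm one for every finite $F$. This difficulty has no counterpart in the graph setting of \cite{bro14} and is likely the technical heart of the proof; it may require a tailored approximate identity for $\mathcal{A}$ or an appeal to strict completeness of $M(\mathcal{A})$ applied to the bounded net $(U_F^g)$.

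With $V_g$ in hand, the remaining assertions follow by transferring generator-level identities to the strict limit. The adjoint identity $V_g^* = V_{g^{-1}}$ comes from $V_{\{v\},g}^* = V_{\{g^{-1}\cdot v\}, g^{-1}}$ together with a re-indexing of the sum, and unitarity $V_g^* V_g = V_g V_g^* = 1$ reduces to the recognition of $\sum_v P_{\{v\}}$ as the identity of $M(\mathcal{A})$ in the same strict sense produced above. For the homomorphism law $V_g V_h = V_{gh}$, relation $(4)$ gives $V_{\{v\},g} V_{\{w\},h} = V_{\{v\} \cap g \cdot \{w\}, gh}$, non-zero only when $v = g \cdot w$, so the double sum collapses to $\sum_u V_{\{u\}, gh} = V_{gh}$. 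Property $(2)(a)$, $V_g P_A = P_{g \cdot A} V_g$, follows directly from $(4)$ by computing both sides term by term as $\sum_{v \in g \cdot A} V_{\{v\},g}$, and property $(2)(b)$ is obtained by a parallel series argument from relation $(5)$, the partial isometry $V_{g \cdot s(e), \varphi(g,e)}$ that appears being absorbed into the multiplier on the right via the identity $S_{g \cdot e} P_{g \cdot s(e)} = S_{g \cdot e}$.
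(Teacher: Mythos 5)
There is a genuine gap at the convergence step, and you identify it yourself; unfortunately it is not just a technical loose end but a fatal obstruction to the strict-topology formulation you chose. Take $a=V_{A,h}\in\mathcal{A}$ with $A\in\mathcal{U}^0$ infinite (such spanning elements are exactly what distinguishes the ultragraph case from the graph case). For finite sets $F\subseteq F'\subseteq U^0$, relation (4) together with additivity of $B\mapsto V_{B,g}$ over disjoint unions gives $(U_{F'}^g-U_F^g)\,V_{A,h}=V_{(F'\setminus F)\cap g\cdot A,\,gh}$, and this is a partial isometry with range projection $P_{(F'\setminus F)\cap g\cdot A}$, hence of norm $1$ whenever $(F'\setminus F)\cap g\cdot A\neq\emptyset$; since $g\cdot A$ is infinite this happens for every finite $F$. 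So the net $(U_F^g a)_F$ is not norm-Cauchy, and $\sum_{v}V_{\{v\},g}$ simply does not converge strictly (already $\sum_v P_{\{v\}}$ fails to converge strictly to $1$, because $\|(1-P_F)P_A\|=\|P_{A\setminus F}\|=1$ for all finite $F$). Consequently neither of your proposed rescues can work: strict completeness of $M(\mathcal{A})$ applies only to strictly Cauchy nets (boundedness alone gives nothing), and no choice of approximate identity changes the norm of the tail $V_{g\cdot A\setminus F,\,gh}$. The parts of your argument that precede and follow this step (orthogonality of the partial isometries $V_{\{v\},g}$, the identification $U_F^g=V_{F,g}$, the eventual stationarity on elements with $|\alpha|\geq 1$ or $|\beta|\geq 1$, and the generator-level verifications of (1), (2)(a), (2)(b)) are correct and agree with the paper's computations.

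The paper avoids the obstruction by weakening the topology: it realizes $M(\mathcal{A})$ inside $B(H)$ for a Hilbert space representation, and proves that the partial sums $V_{A_n,g}$ converge in the strong operator topology, defining $V_g$ as that SOT limit (zero on the complement of the essential subspace of the subalgebra generated by the family). The mechanism is exactly the one your norm estimate cannot see: for a vector $\xi$ one has $\|V_{D,g}\xi\|^2=\langle P_{g^{-1}\cdot D}\,\xi,\xi\rangle$, which is a tail of a convergent series of positive terms by mutual orthogonality of the projections $P_{\{g^{-1}\cdot v\}}$ (the paper phrases this via an approximate-identity estimate against elements of the subalgebra generated by the family); statements (1) and (2) are then obtained by passing the algebraic relations of Definition \ref{defn2...3} to this limit. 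So the repair is not to push harder on strict Cauchyness, but to interpret the convergence of $\sum_v V_{\{v\},g}$ in this weaker, representation-level sense and then check that the limit multiplies $\mathcal{A}$ into itself.
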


\begin{proof}
It is known that there exists a Hilbert space $H$  such that $M(\mathcal{A}) \cong B(H)$, where $B(H)$ is the bounded operator space on $H$. Without loss of generality, assume $M(\mathcal{A})=B(H)$ and $\mathcal{A}$ is a (closed) subalgebra of $B(H)$. Let $\mathcal{D}$ be the $C^*$-subalgebra of $B(H)$ generated by the family $\{S_e, P_A, V_{A,g}:A\in\mathcal{U}^0,e\in \mathcal{U}^1,g\in G \}$ and write $K=\overline{\mathcal{D}(H)}$ as a closed subalgebra of $H$.

List the vertex set $U^0=\{v_1,v_2, \ldots \}$ and set $A_n=\{v_1, \ldots, v_n\}$ for each $n \geq 1$. Since
$$ V_{A_n ,g}=\sum_{i=1}^n V_{\{v_i\}, g}$$
by (CK1) of Definition \ref{defn2.3}, we will prove that the sequence $\{ V_{A_n ,g}\}_{n\geq 1}$ is convergent in $M(\mathcal{A})$ for every $g\in G$.

First, for a given element $\xi \in K$ and $g\in G$, we demonstrate that $\{V_{A_n,g}(\xi)\}_{n \geq 1}$ is a Cauchy sequence in $H$, and hence it is convergent. To see this, since $\xi \in K=\overline{\mathcal{D}(H)}$, there exists a sequence $\{T_k(\xi_k)\} \subseteq \mathcal{D}(H)$ with $\{T_k\}\subseteq \mathcal{D}$ such that $\xi= \lim_{k\rightarrow \infty}T_k(\xi_k)$. Let $\epsilon>0$. Then, there exists $N\geq 1$ such that $\|\xi- T_N(\xi_N)\|< \epsilon /3$. Observe that
$$\|V_{A,g}\|=\|V_{A,g} V_{A,g}^*\|^{1/2} \overset{\text{Def. } \ref{defn2...3}(3)}{=} \|V_{A,g} V_{g^{-1}\cdot A,g^{-1}}\|^{1/2}=\|V_{A,1_G}\|^{1/2}=\|P_A\|\leq 1,$$
and hence we may write for each $n\geq m\geq 1$,
\begin{align}\label{eq6.1}
\|V_{A_n,g} T_N - V_{A_m,g}T_N\|&=\|V_{A_n,g}P_{g^{-1}\cdot A_n} T_N -V_{A_n,g} P_{g^{-1}\cdot A_m}T_N\| \nonumber\\
    & \hspace{4cm} (\text{as } V_{A_n,g} P_{g^{-1}\cdot A_m}=V_{A_m,g}) \nonumber \\
&\leq \|V_{A_n,g}\| \|P_{g^{-1} \cdot A_n} T_N -P_{g^{-1} \cdot A_m} T_N\| \nonumber \\
&\leq \|P_{g^{-1} \cdot A_n} T_N -P_{g^{-1} \cdot A_m} T_N\|.
\end{align}
Since $\{P_{A_n}\}_{n\geq 1}$ is an approximate identity for the subalgebra $\mathcal{D}=\langle S_e,P_A,V_{A,g}\rangle$, and so is $\{P_{g^{-1} \cdot A_n}\}_{n \geq 1}$ for any $g\in G$, there exists $n_0 \geq 1$ such that for every $n \geq m\geq n_0$,
$$\|V_{A_n,g} T_N -V_{A_m,g}T_n\| \overset{(\ref{eq6.1})}{\leq} \|P_{g^{-1} \cdot A_n} T_N -P_{g^{-1} \cdot A_m} T_N\| < \frac{\epsilon}{3(\|\xi_N\|+1)},$$
and therefore, we get
\begin{align*}
\|V_{A_n,g}(\xi) - V_{A_m,g}(\xi)\| & \leq \|V_{A_n,g}(\xi) - V_{A_n,g}(T_N(\xi_N))\|  \\
 & \hspace{1.5cm} + \|V_{A_n,g}(T_N(\xi_N))-V_{A_m,g}(T_N(\xi_N))\| \\
 & \hspace{1.5cm} + \|V_{A_m,g}(T_N(\xi_N))-V_{A_m,g}(\xi)\|\\
& \leq \|V_{A_n,g}\| \|\xi-T_N(\xi_N)\|+\frac{\epsilon \|\xi_N\|}{3(\|\xi_N\|+1)} \\
& \hspace{3.7cm} +\|V_{A_m,g}\|\|T_N(\xi_N)-\xi\|\\
& < (1\cdot \frac{\epsilon}{3})+ \frac{\epsilon}{3}+ (1 \cdot \frac{\epsilon}{3}) =\epsilon.
\end{align*}
Consequently, the sequence $\{V_{A_n,g}(\xi)\}_{n\geq 1}$ is Cauchy in $H$ for all $\xi\in K$, and therefore, it is convergent.

Now, if we define
\begin{equation}\label{eq6.2}
V_g(\xi):= \lim_{n \rightarrow \infty} V_{A_n , g} (\xi_1)= \lim_{n \rightarrow \infty} V_{A_n , g} (\xi)
\end{equation}
for every $g\in G$ and $\xi=\xi_1 + \xi_2\in K\oplus K^\perp=H$ (recall that $V_{A_n,g}|_{K^\perp}=0$ by the definition of $K$), then $V_g$ belongs to $B(H)$ and $\{V_{A_n,g}\}_{n\geq 1}$ converges to $V_g$ in the SOT.

Statements (1) and (2) follow easily from relations of Definition \ref{defn2...3}. Indeed, for each $g\in G$, one may compute
\begin{align*}
V_g V_g^*=\lim_{n\rightarrow \infty} V_{A_n,g} V_{A_n,g}^*&=\lim_{n\rightarrow \infty} V_{A_n,g} V_{g^{-1}\cdot A_n,g^{-1}}\\
&=\lim_{n\rightarrow \infty} V_{(A_n\cap g\cdot A_n),1_G}=\lim_{n\rightarrow \infty} V_{A_n,1_G}V_{g\cdot A_n,1_G}=V_{1_G}
\end{align*}
because $\cup_{n=1}^\infty g\cdot A_n=U^0$. Similarly, we have $V_g V_h=V_{gh}$ for $g,h\in G$, and hence, the map $g\mapsto V_g$ is a unitary $*$-representation from $G$ into $M(\mathcal{O}_{G,\mathcal{U}})$.

For statement (2), given any $A\in \mathcal{U}^0$ and $g\in G$, relation (4) of Definition \ref{defn2...3} yields
$$
V_g P_A=\lim_{n\rightarrow \infty} V_{A_n,g} P_A=\lim_{n\rightarrow \infty} V_{(A_n \cap g \cdot A), g}=\lim_{n\rightarrow \infty} P_{g \cdot A} V_{A_n,g}=P_{g\cdot A}V_g,
$$
which is property (a). Property (b) can be obtained analogously, so we are done.
\end{proof}

In particular, we conclude that:

\begin{cor}
Let $(G,\mathcal{U},\varphi)$ be a self-similar ultragraph. Then the $C^*$-algebra $\widetilde{O}_{G,\mathcal{U}}$, defined in Definition \ref{defn6.1} above, is isomorphic to $\mathcal{O}_{G,\mathcal{U}}$.
\end{cor}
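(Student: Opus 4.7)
The plan is to construct mutually inverse $*$-homomorphisms between $\mathcal{O}_{G,\mathcal{U}}$ and $\widetilde{\mathcal{O}}_{G,\mathcal{U}}$ using the universal properties of the two defining algebras together with Proposition~\ref{prop6.2}. The matching of generators is $u_{A,g}\leftrightarrow p_A u_g$.

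First, I would build $\pi\colon\mathcal{O}_{G,\mathcal{U}}\to\widetilde{\mathcal{O}}_{G,\mathcal{U}}$ by sending $p_A\mapsto p_A$, $s_e\mapsto s_e$, and $u_{A,g}\mapsto p_A u_g$ inside $\mathcal{D}_{G,\mathcal{U}}$. To invoke the universal property of $\mathcal{O}_{G,\mathcal{U}}$, I need to verify that the triple $\{p_A,s_e,p_A u_g\}$ is a $(G,\mathcal{U})$-family in the sense of Definition~\ref{defn2...3}. Relations (1)--(4) follow by direct computation from Definition~\ref{defn6.1}: for instance, $(p_A u_g)(p_B u_h)=p_A p_{g\cdot B} u_g u_h=p_{A\cap g\cdot B}u_{gh}$, using (3a) of Definition~\ref{defn6.1} and the fact that $g\mapsto u_g$ is a unitary representation. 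For (5), using Definition~\ref{defn6.1}(3b),
\[
(p_A u_g)s_e=p_A s_{g\cdot e}u_{\varphi(g,e)}=\begin{cases}s_{g\cdot e}u_{\varphi(g,e)}&\text{if }g\cdot r(e)\subseteq A,\\ 0&\text{otherwise;}\end{cases}
\]
in the nonzero case this equals $s_{g\cdot e}p_{g\cdot s(e)}u_{\varphi(g,e)}$ (since $s_{g\cdot e}=s_{g\cdot e}p_{g\cdot s(e)}$), which is precisely the image of $s_{g\cdot e}u_{g\cdot s(e),\varphi(g,e)}$ under the prescription. By (\ref{eq6.1-1}) the image of $\pi$ is exactly $\widetilde{\mathcal{O}}_{G,\mathcal{U}}$.

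Second, for the reverse direction, I would apply Proposition~\ref{prop6.2} to the canonical $(G,\mathcal{U})$-family in $\mathcal{O}_{G,\mathcal{U}}$ to obtain unitaries $U_g\in M(\mathcal{O}_{G,\mathcal{U}})$. The triple $(p_A,s_e,U_g)$ inside $M(\mathcal{O}_{G,\mathcal{U}})$ already satisfies Definition~\ref{defn6.1}(1), (2), and (3a) by Proposition~\ref{prop6.2}. For (3b),
\[
U_g s_e=\lim_n u_{A_n,g}s_e=s_{g\cdot e}u_{g\cdot s(e),\varphi(g,e)}=s_{g\cdot e}p_{g\cdot s(e)}U_{\varphi(g,e)}=s_{g\cdot e}U_{\varphi(g,e)},
\]
where the third equality uses the key identity $u_{A,h}=p_A U_h$ in $M(\mathcal{O}_{G,\mathcal{U}})$ and the fourth uses $s_{g\cdot e}=s_{g\cdot e}p_{g\cdot s(e)}$. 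Universality of $\mathcal{D}_{G,\mathcal{U}}$ then supplies a unital $*$-homomorphism $\widetilde{\rho}\colon\mathcal{D}_{G,\mathcal{U}}\to M(\mathcal{O}_{G,\mathcal{U}})$ whose value on a spanning element of $\widetilde{\mathcal{O}}_{G,\mathcal{U}}$ is $\widetilde{\rho}(s_\alpha p_A u_g s_\beta^*)=s_\alpha p_A U_g s_\beta^*=s_\alpha u_{A,g}s_\beta^*\in\mathcal{O}_{G,\mathcal{U}}$, so the restriction $\rho\colon\widetilde{\mathcal{O}}_{G,\mathcal{U}}\to\mathcal{O}_{G,\mathcal{U}}$ is well-defined.

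Finally, $\pi$ and $\rho$ are mutually inverse: on the spanning elements of (\ref{(2.1)}) and (\ref{eq6.1-1}) one checks $\rho\pi(s_\alpha u_{A,g}s_\beta^*)=s_\alpha u_{A,g}s_\beta^*$ and $\pi\rho(s_\alpha p_A u_g s_\beta^*)=s_\alpha p_A u_g s_\beta^*$, whence the isomorphism follows by density and continuity. The main obstacle is the intermediate identity $u_{A,h}=p_A U_h$ in $M(\mathcal{O}_{G,\mathcal{U}})$ for arbitrary $A\in\mathcal{U}^0$ (possibly infinite); I would establish it via
\[
p_A U_h=\lim_n p_A u_{A_n,h}=\lim_n u_{A\cap A_n,h}=\lim_n p_{A\cap A_n}u_{A,h}=u_{A,h},
\]
where the middle equalities are applications of relation~(4) of Definition~\ref{defn2...3}, and the last equality uses that $\{p_{A_n}\}$ is an approximate unit for $\mathcal{O}_{G,\mathcal{U}}$ (as already established in the proof of Proposition~\ref{prop6.2}), so $p_{A\cap A_n}=p_A p_{A_n}$ converges in norm to $p_A$ when applied to $u_{A,h}$.
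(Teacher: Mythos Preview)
Your proposal is correct and follows essentially the same strategy as the paper's proof: both construct mutually inverse $*$-homomorphisms via the universal properties, matching $u_{A,g}\leftrightarrow p_A u_g$ and using Proposition~\ref{prop6.2} to produce the unitaries $U_g\in M(\mathcal{O}_{G,\mathcal{U}})$. Your argument is in fact more detailed than the paper's, supplying the explicit verification of relation~(5) and of the identity $u_{A,h}=p_A U_h$, which the paper leaves implicit.
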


\begin{proof}
Suppose that $\{s_e,p_A,u_{A,g}\}$ and $\{\widetilde{s}_e,\widetilde{p}_A,\widetilde{u}_g\}$ are generating families, as in Definitions \ref{defn2...3} and \ref{defn6.1}, for $\mathcal{O}_{G,\mathcal{U}}$ and $\widetilde{\mathcal{O}}_{G,\mathcal{U}}$ respectively. By Proposition \ref{prop6.2}, we may define the unitaries $u_g:=\sum_{v\in U^0}u_{\{v\},g}$ in $M(\mathcal{O}_{G,\mathcal{U}})$ for all $g\in G$, so that the family $\{s_e,p_A,u_g:A\in \mathcal{U}^0,e\in \mathcal{U}^1,g\in G\}$ in $M(\mathcal{O}_{G,\mathcal{U}})$ satisfies relations of Definition \ref{defn6.1}. On the other hand, if we set $\widetilde{u}_{A,g}:=\widetilde{p}_A \widetilde{u}_{g}$ for $A\in \mathcal{U}^0$ and $g\in G$, then $\{\widetilde{s}_e,\widetilde{p}_A,\widetilde{u}_{A,g}\}$ is a $(G,\mathcal{U})$-family in  $\widetilde{\mathcal{O}}_{G,\mathcal{U}}$ in the sense of Definition \ref{defn2...3}. Hence, by universality, there are canonical $*$-homomorphisms $\phi:\widetilde{\mathcal{O}}_{G,\mathcal{U}} \rightarrow \mathcal{O}_{G,\mathcal{U}}$ and $\psi:\mathcal{O}_{G,\mathcal{U}}\rightarrow \widetilde{\mathcal{O}}_{G,\mathcal{U}}$ such that $\phi \circ \psi$ and $\psi \circ \phi$ are identity on the generating terms in the spans of (\ref{(2.1)}) and (\ref{eq6.1-1}), respectively. It follows that $\phi$ and $\psi$ are isomorphisms, concluding the result.
\end{proof}


\subsection{A crossed product form}\label{subs6.2}
Let us recall here the definition of a crossed product. A {\it{discrete $C^*$-dynamical system}} (or briefly {\it{discrete dynamical system}}) is a triple $({\mathcal{A}},G,\eta)$ consisting of a $C^*$-algebra ${\mathcal{A}}$, a discrete group $G$, and a $*$-homomorphism $\eta:G\rightarrow \text{Aut}({\mathcal{A}})$, denoted by $g \mapsto \eta_g$. Then, $C_c(G,{\mathcal{A}})$ is the linear span of finitely supported ${\mathcal{A}}$-valued functions on $G$. A typical element $f$ in $C_c(G,{\mathcal{A}})$ is written as a sum $f=\sum_{g\in G}a_g \delta_g$, with $a_g \in {\mathcal{A}}$, such that only finitely many coefficients $a_g$ are nonzero. We equip $C_c(G,{\mathcal{A}})$ with the $\eta$-twisted convolution

\begin{equation}\label{eq6.4}
f_1 *_{\eta}f_2=\sum_{g,h\in G}a_g {\eta}_g (b_h)\delta_{gh},
\end{equation}
where $f_1=\sum_{g\in G}a_g \delta_g$ and $f_2=\sum_{h\in G}b_h \delta_h$, and the $*$-operation
\begin{equation}\label{eq6.5}
f^*=\sum_{g\in G}\eta_{g^{-1}}(a^*_g) \delta_{g^{-1}}.
\end{equation}

\begin{defn}
The ({\it{full}}) {\it{crossed product}} of a $C^*$-dynamical system $({\mathcal{A}},G,\eta)$, denoted by ${\mathcal{A}}\rtimes_{\eta} G$, is the completion of $C_c(G,{\mathcal{A}})$ taken by the norm
$$\| x\| _{u}=\sup \|   \pi(x)   \|,$$
where supremum is over all cyclic $*$-homomorphisms $\pi:C_c(G,{\mathcal{A}})\rightarrow B(H)$.
\end{defn}

Let $(G,\mathcal{U},\iota)$ be a self-similar ultragraph such that $\iota:G \times {\mathcal{U}}^1 \rightarrow G$ is the trivial $1$-cocycle defined by $\iota(g,\alpha)=g$ for all $\alpha \in {\mathcal{U}}^1$. In particular, $(G,\mathcal{U},\iota)$ is pseudo free in this case \cite[Theorem 10.4]{ana02}.

In the rest of the paper, we fix a generating $(G,\mathcal{U})$-family $\{s_e,p_A,u_{A,g}:A\in \mathcal{U}^0,e\in \mathcal{U}^1 \text{ and } g\in G\}$ for $\mathcal{O}_{G,\mathcal{U}}$. Observe that an application of the gauge-invariant uniqueness theorem for ultragraph $C^*$-algebras \cite[Proposition 5.5]{ana00} implies that the ultragraph $C^*$-algebra $C^*(\mathcal{U})$ can be canonically embedded in $\mathcal{O}_{G,\mathcal{U}}$. So, with respect to this embedding, we may regard the ultragraph $C^*$-algebra $C^*(\mathcal{U})$ as a $C^*$-subalgebra of $\mathcal{O}_{G,\mathcal{U}}$ so that the Cuntz-Krieger $\mathcal{U}$-family $\{s_e, p_A:A\in \mathcal{U}^0,e\in \mathcal{U}^1\}$ generates $C^*(\mathcal{U})$. Thus, one can define an action $\eta:G \curvearrowright C^*({\mathcal{U}})$ by
$$\eta_g(s_\alpha p_As^*_\beta)=s_{g\cdot \alpha}p_{g\cdot A}s^*_{g\cdot \beta}$$
on each term of (\ref{eq2.2}).

\begin{lem}\label{lem6.2}
Given $g\in G$, let $u_g:=\sum_{v\in U^0}u_{\{v\},g}$ be an element in the multiplier algebra $M(\mathcal{O}_{G,{\mathcal{U} }})$ by Proposition \ref{prop6.2}. Then
\begin{equation}\label{eq6.6}
\eta_g(a)=u_g a u^*_g
\end{equation}
for every $g\in G$ and $a\in C^*(\mathcal{U}).$
\end{lem}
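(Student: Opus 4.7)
Since $C^*(\mathcal{U}) = \overline{\mathrm{span}}\{s_\alpha p_A s_\beta^* : \alpha,\beta\in \mathcal{U}^*,\ A\in \mathcal{U}^0,\ A \subseteq s(\alpha)\cap s(\beta)\}$, and both sides of $(\ref{eq6.6})$ are bounded linear in $a$, my plan is to verify the identity on a generic spanning element $s_\alpha p_A s_\beta^*$ and then invoke linearity and continuity. The key inputs will be (i) $u_g$ is a unitary in $M(\mathcal{O}_{G,\mathcal{U}})$ by Proposition \ref{prop6.2}(1), so $u_g u_g^* = u_g^* u_g = 1$, and (ii) the two covariance identities $u_g p_A = p_{g\cdot A} u_g$ and $u_g s_e = s_{g\cdot e}\, u_{\varphi(g,e)}$ from Proposition \ref{prop6.2}(2). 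The hypothesis that the $1$-cocycle is trivial ($\varphi \equiv \iota$) is crucial because it collapses the second identity to
\[
u_g s_e = s_{g\cdot e}\, u_g \qquad (g\in G,\ e\in \mathcal{U}^1),
\]
so that $u_g$ ``commutes past'' $s_e$ up to the action on the edge, with no change of group element.

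First I would promote the edge covariance to all finite paths. For $\alpha = e_1 \cdots e_n$ a straightforward induction on $n$, using $g\cdot \alpha = (g\cdot e_1)(g\cdot e_2)\cdots (g \cdot e_n)$ (which holds for the trivial cocycle because $\varphi(g,e_1) = g$, $\varphi(g,e_1 e_2)=g$, etc.), gives
\[
u_g s_\alpha = s_{g\cdot \alpha}\, u_g.
\]
Taking adjoints yields the dual identity $s_\beta^* u_g^* = u_g^* s_{g\cdot \beta}^*$, equivalently $s_\beta^* u_g = u_g s_{g\cdot \beta}^*$ (using that $u_g$ is unitary), which I will need in order to push $u_g^*$ through $s_\beta^*$ on the right.

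With these tools the main computation is a direct five-line manipulation: for a spanning element $a = s_\alpha p_A s_\beta^*$ with $A \subseteq s(\alpha)\cap s(\beta)$,
\begin{align*}
u_g a u_g^* &= u_g s_\alpha p_A s_\beta^* u_g^* \\
&= s_{g\cdot\alpha}\, u_g\, p_A s_\beta^* u_g^* \\
&= s_{g\cdot\alpha}\, p_{g\cdot A}\, u_g s_\beta^* u_g^* \\
&= s_{g\cdot\alpha}\, p_{g\cdot A}\, u_g u_g^* s_{g\cdot\beta}^* \\
&= s_{g\cdot\alpha}\, p_{g\cdot A}\, s_{g\cdot\beta}^* \;=\; \eta_g(s_\alpha p_A s_\beta^*).
\end{align*}
Finally, since the inner-automorphism map $a \mapsto u_g a u_g^*$ and $\eta_g$ are both bounded $*$-homomorphisms and they agree on the spanning set above, they coincide on all of $C^*(\mathcal{U})$ by continuity, completing the proof.

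I do not expect any serious obstacle: the nontrivial groundwork (convergence of the series $\sum_v u_{\{v\},g}$ in $M(\mathcal{O}_{G,\mathcal{U}})$ and the unitarity together with the covariance identities) is exactly Proposition \ref{prop6.2}, and the triviality of $\varphi$ is what reduces the cocycle factor $u_{\varphi(g,e)}$ to the single $u_g$ on the right, so that the five-line collapse above actually goes through.
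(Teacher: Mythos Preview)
Your proof is correct and follows essentially the same approach as the paper: reduce to spanning elements $s_\alpha p_A s_\beta^*$, use the trivial cocycle to get $u_g s_\alpha = s_{g\cdot\alpha} u_g$ and $u_g p_A = p_{g\cdot A} u_g$ from Proposition~\ref{prop6.2}(2), and then compute $u_g a u_g^*$ directly. One small slip: the ``equivalently $s_\beta^* u_g = u_g s_{g\cdot\beta}^*$'' is not quite right (the correct version with $g$ in place of $g^{-1}$ on the right would be $s_\beta^* u_g = u_g s_{g^{-1}\cdot\beta}^*$), but you never actually use it---your computation relies only on the correct adjoint identity $s_\beta^* u_g^* = u_g^* s_{g\cdot\beta}^*$, so the argument goes through.
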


\begin{proof}
First, since the 1-cocycle is $\iota(g,\alpha)=g$, Proposition \ref{prop6.2}(2) implies that
$u_g s_\alpha = s_{g \cdot \alpha} u_g$ and $u_g p_A=p_{g \cdot A} u_g$
for all $g\in G$, $A\in \mathcal{U}^0$, and $\alpha \in \mathcal{U}^{\geq 1}$. Thus, we may conclude the result by verifying (\ref{eq6.6}) for elements of the form $a=s_\alpha p_A s^*_\beta$ in (\ref{eq2.2}) as follows:
\begin{align*}
u_g a u^*_g &=u_g (  s_\alpha p_A s^*_\beta  ) u^*_g \\
&=(u_g s_\alpha) p_A (u_g s_\beta)^* \\
&=(s_{g\cdot \alpha} u_g) p_A (s_{g\cdot \beta}u_g)^* \\
&=(s_{g\cdot \alpha} p_{g \cdot A}) u_g u_{g^{-1}} s^*_{g\cdot \beta} \\
&=s_{g\cdot \alpha} p_{g \cdot A} s^*_{g\cdot \beta} \\
&=\eta_g (a).
\end{align*}
\end{proof}

It follows from Lemma \ref{lem6.2} that $(G,C^*({\mathcal{U}}),\eta)$ is a dynamical system and  we can construct the crossed product $C^*({\mathcal{U}})\rtimes_{\eta}G$. Note that, as we assume $C^*({\mathcal{U}}) \subseteq {\mathcal{O}}_{G,{\mathcal{U}}}$, a typical generator $s_\alpha p_A s_\beta^* \delta_g$ in $C_c(G, C^*(\mathcal{U}))$ is corresponding to the one $s_\alpha p_A s_\beta^* u_g$ in $\mathcal{O}_{G,\mathcal{U}}$, and the next Theorem ensures $C^*({\mathcal{U}})\rtimes_{\eta}G \cong {\mathcal{O}}_{G,{\mathcal{U}}}$ via this correspondence. Its proof is analogous to that of \cite[Theorem 8.4]{ana02}. Before that, let us recall the definition of tight representations from \cite{exe21}.

\begin{defn}[{\cite[Definition 13.1]{{exe21}}}]\label{defn6.4}
Let $\mathcal{S}$ be an inverse semigroup with a zero element. A representation $\pi:\mathcal{S}\rightarrow \mathcal{A}$ on a unital $C^*$-algebra $\mathcal{A}$ is called \emph{tight} if for every $X,Y\subseteq \mathcal{E}(\mathcal{S})$ and every finite cover $Z$ for the set
$$\mathcal{E}(\mathcal{S})^{X,Y}:=\left\{e\in \mathcal{E}(\mathcal{S}): e\leq f, \;\; \mathrm{for \;all}\;\; f\in X, ~ \mathrm{and} ~ ef'=0 \;\; \mathrm{for \;all}\;\; f'\in Y\right\},$$
one has
$$\bigvee_{e\in Z} \pi(e)=\bigwedge_{f\in X} \pi(f) \wedge \bigwedge_{f'\in Y}(1-\pi(f')).$$
In addition, we say $\pi$ is a \emph{universal tight representation} if for every tight representation $\pi:\mathcal{S}\rightarrow \mathcal{A}$, there exists a $*$-homomorphism $\psi: \pi(\mathcal{S})\rightarrow \mathcal{A}$ such that $\psi \circ \pi=\phi$.
\end{defn}

\begin{thm}\label{thm6.3}
Let $(G,\mathcal{U},\iota)$ be a self-similar ultragraph with the trivial $1$-cocycle $\iota$. Let $\{s_e,p_A\}$ be a Cuntz-Krieger $\mathcal{U}$-family generating $C^*(\mathcal{U})$. Then the map
$$\phi:{\mathcal{S}}_{G,{\mathcal{U}}}\rightarrow C^*({\mathcal{U}})\rtimes_{\eta}G,$$
defined by $\phi(\alpha,A,g,\beta)=s_\alpha p_A s^*_{g\cdot \beta}  \delta_g$
\footnote{As $\omega$ is the universal 0-length path, in the case $\alpha=\omega$, we have $\phi(\omega,A,g,\beta)=p_A s_{g\cdot \beta}^* \delta_g$; and a same thing can be said for the case $\beta=\omega$.},
is a universal tight representation of ${\mathcal{S}}_{G,{\mathcal{U}}}$. Consequently, $C^*(\mathcal{U}) \rtimes_\eta G \cong \mathcal{O}_{G, \mathcal{U}}$.
\end{thm}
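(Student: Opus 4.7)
The plan is to proceed in three steps: first verify that $\phi$ is a well-defined $*$-homomorphism of inverse semigroups, then show it is tight, and finally establish universality, from which the consequence $C^*(\mathcal{U}) \rtimes_\eta G \cong \mathcal{O}_{G,\mathcal{U}}$ follows by the analogue of \cite[Theorem 8.4]{ana02} identifying $\mathcal{O}_{G,\mathcal{U}}$ with the universal $C^*$-algebra of tight representations of $\mathcal{S}_{G,\mathcal{U}}$.

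For well-definedness, I would first check the involution by direct computation,
$$\phi(\alpha,A,g,\beta)^* = \eta_{g^{-1}}(s_{g\cdot\beta} p_A s_\alpha^*)\,\delta_{g^{-1}} = s_\beta p_{g^{-1}\cdot A} s_{g^{-1}\cdot\alpha}^*\,\delta_{g^{-1}} = \phi(\beta, g^{-1}\cdot A, g^{-1},\alpha),$$
and then verify multiplicativity case by case for the product $(\alpha,A,g,\beta)(\gamma,B,h,\delta)$ of Section \ref{sub2.4}. The tail extension case $\gamma = \beta\varepsilon$ reduces to computing $s_{g\cdot\beta}^* s_{g\cdot(\beta\varepsilon)} = s_{g\cdot\varepsilon}$ via the Cuntz-Krieger relations and then applying $\eta_g$ using triviality of $\iota$; the source extension case $\beta = \gamma\varepsilon$ is symmetric; and the equal base case $\gamma = \beta$ follows from $s_{g\cdot\beta}^* s_{g\cdot\beta} = p_{g\cdot s(\beta)}$ together with the relation $p_C s_e = s_e$ when $r(e)\subseteq C$.

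Tightness of $\phi$ is nearly automatic: on idempotents, $\phi(q_{(\alpha,A)}) = s_\alpha p_A s_\alpha^*\,\delta_{1_G}$, so $\phi|_{\mathcal{E}(\mathcal{S}_{G,\mathcal{U}})}$ factors through the canonical embedding $C^*(\mathcal{U}) \hookrightarrow C^*(\mathcal{U}) \rtimes_\eta G$. Since $\mathcal{E}(\mathcal{S}_{G,\mathcal{U}}) = \mathcal{E}(\mathcal{S}_{\mathcal{U}})$ as noted in Section \ref{sub2.4}, the Cuntz-Krieger $\mathcal{U}$-family is known to induce a tight representation of $\mathcal{S}_{\mathcal{U}}$ by \cite{boa17,bed17}, and tightness in Definition \ref{defn6.4} depends only on the idempotent semilattice, $\phi$ is tight. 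For universality, given any tight representation $\pi:\mathcal{S}_{G,\mathcal{U}}\to \mathcal{A}$, I set $P_A := \pi(q_{(\omega,A)})$, $S_e := \pi((e,s(e),1_G,\omega))$, and $U_{A,g} := \pi((\omega,A,g,\omega))$, verify by direct computation with the multiplication of $\mathcal{S}_{G,\mathcal{U}}$ that these constitute a $(G,\mathcal{U})$-family in $\mathcal{A}$, and then invoke the universal property of $\mathcal{O}_{G,\mathcal{U}}$ to obtain the desired factorization of $\pi$ through $\phi$. The final isomorphism $C^*(\mathcal{U})\rtimes_\eta G \cong \mathcal{O}_{G,\mathcal{U}}$ then follows by combining this with the reverse homomorphism induced by the covariant pair $(C^*(\mathcal{U}), u)$ inside $M(\mathcal{O}_{G,\mathcal{U}})$ provided by Lemma \ref{lem6.2}, which sends $s_\alpha p_A s_\beta^*\,\delta_g \mapsto s_\alpha p_A s_\beta^* u_g$; this map inverts the one induced by $\phi$ on generators.

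The main obstacle is the multiplicativity check in Step 1, particularly the two extension cases, which require careful bookkeeping with projection supports of the forms $A \cap g\cdot B$ and $g\cdot s(\varepsilon) \cap (\varphi(g,\varepsilon)\cdot B)$ and their compatibility with the relation $p_C s_e = s_e$ iff $r(e)\subseteq C$. Triviality of $\iota$ is essential throughout: it reduces $\eta_g(s_\alpha) = s_{g\cdot\alpha}$ to a clean shift and aligns the covariance $u_g s_e = s_{g\cdot e} u_g$ of Lemma \ref{lem6.2} with the $\eta$-twisted convolution, so that no cocycle-induced unitary twists appear in the output of $\phi$.
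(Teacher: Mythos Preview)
Your verification that $\phi$ is a $*$-homomorphism and your tightness argument match the paper's treatment. The difference lies in the universality step. The paper argues directly with the universal property of the \emph{crossed product}: given a tight representation $\psi:\mathcal{S}_{G,\mathcal{U}}\to\mathcal{A}$, it builds a Cuntz--Krieger family $\{S_e,P_A\}$ (giving $\pi:C^*(\mathcal{U})\to\mathcal{A}$), then assembles the unitaries $V_g=\sum_{v\in U^0}\psi(\{v\},\{v\},g,g^{-1}\cdot\{v\})$ in $M(\mathcal{A})$ via Proposition~\ref{prop6.2}, checks covariance of $(\pi,V)$, and obtains $T:C^*(\mathcal{U})\rtimes_\eta G\to\mathcal{A}$ with $T\circ\phi=\psi$ from the crossed-product universal property. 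Your route instead passes through $\mathcal{O}_{G,\mathcal{U}}$: you build a $(G,\mathcal{U})$-family from $\pi$, get $\widetilde{T}:\mathcal{O}_{G,\mathcal{U}}\to\mathcal{A}$, and then separately establish $C^*(\mathcal{U})\rtimes_\eta G\cong\mathcal{O}_{G,\mathcal{U}}$ by inverting maps on generators. Both work, but note a small logical slip: the universal property of $\mathcal{O}_{G,\mathcal{U}}$ factors $\pi$ through the canonical representation into $\mathcal{O}_{G,\mathcal{U}}$, not through $\phi$; universality of $\phi$ itself only follows \emph{after} you have the isomorphism in hand. Also, to verify (CK4) for your $(G,\mathcal{U})$-family you need tightness of $\pi$, not merely the semigroup multiplication---the paper invokes \cite[Theorem~8.3]{ana02} for precisely this. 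The paper's approach is slightly more direct for the stated theorem (it proves universality of $\phi$ without first knowing the isomorphism), while yours front-loads the isomorphism and deduces universality as a corollary.
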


\begin{proof}
Using the multiplication and inversion in $\mathcal{S}_{G,\mathcal{U}}$ and those (\ref{eq6.4}), (\ref{eq6.5}) in $C^*(\mathcal{U})\rtimes_\eta G$, it is straightforward to verify that $\phi$ is a $*$-homomorphism. Also, for each idempotent $q_{(\alpha,A   )}\in {\mathcal{E}}( {\mathcal{S}}_{G,{\mathcal{U}}})$ we have $\phi( q_{(\alpha,A   )})=s_{\alpha}p_A s^*_{\alpha} \delta_{1_G}$, and the tightness of $\phi$ may be shown by an argument analogous to that for the map in \cite[Theorem 8.2]{ana02}.

So, we prove the universality of $\phi$ in the sense that if $\psi:{\mathcal{S}}_{G,{\mathcal{U}}}\rightarrow {\mathcal{A}}$ is a tight representation into a $C^*$-algebra ${\mathcal{A}}$, then there exists a $*$-homomorphism $T:C^*({\mathcal{U}})\rtimes_{\eta}G \rightarrow {\mathcal{A}}$ such that $T \circ \phi=\psi$. Assume such a representation $\psi:{\mathcal{S}}_{G,{\mathcal{U}}}\rightarrow {\mathcal{A}}$ is given. If we define
$$S_e=\psi(e,s(e),1_G,\omega)\text{ and }P_A=\psi(\omega,A,1_G,\omega)$$
for $A\in {\mathcal{U}}^0$ and $e\in {\mathcal{U}}^1$, then \cite[Theorem 8.3]{ana02} shows that $\{ s_e,p_A: \; A\in {\mathcal{U}}^0  , e\in {\mathcal{U}}^1 \}$ is a Cuntz-Krieger ${\mathcal{U}}$-family. Thus, there is a homomorphism $\pi:C^*({\mathcal{U}})\rightarrow {\mathcal{A}}$ such that $\pi(p_v)=P_A$ and $\pi(s_e)=S_e$ for all $A\in {\mathcal{U}}^0$ and $e\in {\mathcal{U}}^1$.

Moreover, define
$$V_{v,g}:=\phi(\{v\} , \{v\},g,g^{-1}\cdot \{v\})$$
for every $v\in U^0$ and $g\in G$. If $V_g:=\sum_{v\in U^0}V_{v,g}$ in the multiplier algebra $M({\mathcal{A}})$, then $V:G  \rightarrow M({\mathcal{A}})$, by $g \mapsto V_g$, is a unitary $*$-representation of $G$. (We may follow the proof of Proposition \ref{prop6.2} to verify that $V_g$ is well-defined.) In order to see this, for $g,h \in G$, one may compute
\begin{align*}
V_g V_h&=(\sum_{v\in U^0}V_{v,g})(\sum_{w\in U^0}V_{w,h}) \\
&=\sum_{\substack{v,w\in U^0\\g^{-1}\cdot v=w\\ }}\phi((\{v\} , \{v\},g,g^{-1}\cdot \{v\})(\{w\} , \{w\},h,h^{-1}\cdot \{w\}))\\
&=\sum_{v\in U^0}\phi(\{v\} , \{v\},gh,h^{-1}g^{-1}\cdot \{v\})\\
&=\sum_{v\in U^0}V_{v,gh}\\
&=V_{gh},
\end{align*}
hence $V$ is multiplicative. Similarly, we have $V_{g^{-1}}=V^*_g$ and therefore $V$ is a unitary $*$-representation.

Next, $(V,\pi)$ induces a linear map $V \times \pi$ of $C_c(G,C^*(\mathcal{U}))$ via the definition
$$ V \times \pi(\sum_{g\in G}a_g \delta_g)=\sum_{g\in G}\pi(a_g)V_g.$$
Note that $(V,\pi)$ is covariant as well because
\begin{align*}
V_g \pi(s_\alpha p_A s^*_\beta)V^*_g&=(V_g S_\alpha) P_A (V_g S_\beta)^* \\
&=(S_{g\cdot \alpha} V_g) P_A (S_{g\cdot \beta} V_g)^*\\
&=S_{g\cdot \alpha}P_{g\cdot A}(V_g V_{g^-1})S^*_{g\cdot \beta}\\
&=S_{g\cdot \alpha}P_{g\cdot A}S^*_{g\cdot \beta} \\
&=\pi(\eta_g(s_\alpha p_A s^*_\beta))
\end{align*}
for the elements of the form $a=s_\alpha p_A s^*_\beta$ in (\ref{eq2.2}). Therefore, by \cite[Proposition 4.1.3]{brw40}, we have a $*$-homomorphism $T:C^*({\mathcal{U}}) \rtimes_\eta G \rightarrow {\mathcal{A}}$ such that $T |_{C_c(G,C^*({\mathcal{U}}) )}=V \times \pi$. Since $T  \circ \phi=\psi$, we conclude that $\phi$ is a universal tight representation.

The second statement follows from the first one together with \cite[Theorem 8.4]{ana02}.
\end{proof}

As said before, if $(G,{\mathcal{U}},\varphi) $ is a self-similar ultragraph with $\varphi=\iota$, then it is pseudo free or equivalently, there are no paths $\alpha \in {\mathcal{U}}^{\geq 1}$ being strongly fixed by some $g\in G \backslash \{1_G\}$. So, by considering property $(1)$ in Condition $(*)$, $(G,{\mathcal{U}},\iota) $ satisfies Condition $(*)$ if and only if for each $v\in U^0$ and $g\in G \backslash \{1_G\}$, there is $x \in {\mathcal{U}}^\infty$ with $r(x)=\{v\}$ such that $g\cdot x  \neq x$ (this is equivalent to $g\cdot \alpha \neq \alpha$ for some $\alpha \in {\mathcal{U}}^{\geq 1}$ with $r(\alpha)=\{v\}$). Thus, we conclude the following result for the simplicity of ${\mathcal{O}}_{G,{\mathcal{U}}}$.

\begin{cor}\label{cor6.4}
Let $(G,{\mathcal{U}},\iota) $ be a self-similar ultragraph with the trivial $1$-cocycle $\iota$. Suppose that $G$ is an amenable group. If
\begin{enumerate}[$(1)$]
\item $(G,{\mathcal{U}},\iota) $ is $G$-cofinal,
\item every  $G$-cycle in $(G,{\mathcal{U}},\iota) $ has an entrance, and
\item for every $v\in U^0$ and $g\in G \backslash \{1_G\}$, there is $\alpha \in {\mathcal{U}}^{\geq 1}$ with $r(\alpha)=\{v\}$ such that $g\cdot \alpha \neq \alpha$,
\end{enumerate}
then the $C^*$-algebra ${\mathcal{O}}_{G,{\mathcal{U}}}    \cong   C^*({\mathcal{U}})\rtimes_{\eta}G$ is simple.
\end{cor}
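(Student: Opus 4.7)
The plan is to show that the tight groupoid $\mathcal{G}_{\mathrm{tight}}(\mathcal{S}_{G,\mathcal{U}})$, whose full $C^*$-algebra is canonically isomorphic to $\mathcal{O}_{G,\mathcal{U}}$ by the universal tight representation established in Theorem \ref{thm6.3} (combined with Exel's identification of the universal tight $C^*$-algebra with $C^*$ of the tight groupoid), meets the hypotheses of the standard groupoid simplicity criterion of \cite{brt92}: it is Hausdorff, ample, second countable, amenable, minimal, and effective. Minimality is immediate from hypothesis $(1)$ and Theorem \ref{thm3.4}, and hypothesis $(2)$ supplies exactly the entrance condition on $G$-cycles required by Theorem \ref{thm4.8}; so the substantive work is in verifying Condition $(*)$ and then amenability.

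The triviality of $\iota$ collapses Condition $(*)$ to a vacuous statement. Indeed, a path $\alpha$ is strongly fixed by $g$ precisely when $g\cdot \alpha=\alpha$ and $\varphi(g,\alpha)=g=1_G$, so for any $g\in G\setminus\{1_G\}$ no path is strongly fixed. It therefore suffices to show that the premise of Condition $(*)$, namely the existence of some $g\neq 1_G$ and $A\in \mathcal{U}^0$ with $g\cdot x=x$ for all $x\in A\mathcal{U}^\infty$, never arises. Given such $g$ and $A$, pick any $v\in A$; hypothesis $(3)$ produces $\alpha\in \mathcal{U}^{\geq 1}$ with $r(\alpha)=\{v\}$ and $g\cdot \alpha\neq \alpha$, and because $\mathcal{U}$ is regular (every vertex receives an edge, so paths can be extended forward) we may extend $\alpha$ to $x=\alpha y\in A\mathcal{U}^\infty$; then the first $|\alpha|$ edges of $g\cdot x=(g\cdot \alpha)(\varphi(g,\alpha)\cdot y)$ already differ from those of $x$, so $g\cdot x\neq x$, contradicting the premise. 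Hence Condition $(*)$ holds vacuously and Theorem \ref{thm4.8} yields effectiveness.

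Hausdorffness of $\mathcal{G}_{\mathrm{tight}}(\mathcal{S}_{G,\mathcal{U}})$ is already established in \cite{ana02}, since the trivial cocycle makes $(G,\mathcal{U},\iota)$ pseudo-free. For amenability of the groupoid, I would exploit the crossed product form $\mathcal{O}_{G,\mathcal{U}}\cong C^*(\mathcal{U})\rtimes_\eta G$ of Theorem \ref{thm6.3}: the ultragraph $C^*$-algebra $C^*(\mathcal{U})$ is nuclear (its tight groupoid is amenable), and amenability of $G$ gives coincidence of the full and reduced crossed products; transported back through the identification $\mathcal{O}_{G,\mathcal{U}}\cong C^*(\mathcal{G}_{\mathrm{tight}}(\mathcal{S}_{G,\mathcal{U}}))$ this forces $C^*(\mathcal{G}_{\mathrm{tight}}(\mathcal{S}_{G,\mathcal{U}}))=C^*_r(\mathcal{G}_{\mathrm{tight}}(\mathcal{S}_{G,\mathcal{U}}))$, which is the required amenability of the tight groupoid. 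With Hausdorffness, amenability, minimality and effectiveness in hand, \cite{brt92} concludes that $\mathcal{O}_{G,\mathcal{U}}$ is simple. I expect the amenability step to be the main technical obstacle, since the paper has not developed amenability of the tight groupoid directly; the cleanest route is the crossed product argument above, but a more intrinsic justification would need an inverse semigroup or semidirect product result combining amenability of the underlying ultragraph groupoid with amenability of $G$.
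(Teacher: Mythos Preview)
Your overall strategy matches the paper's: verify that hypothesis (3) collapses Condition $(*)$ to a vacuous statement (the paper makes exactly this observation in the paragraph preceding the corollary), then feed $G$-cofinality, the entrance condition, and pseudo-freeness into Theorems \ref{thm3.4}, \ref{thm4.8}, and the Hausdorffness result of \cite{ana02}, and finally invoke \cite[Theorem 5.1]{brt92}.

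The one substantive divergence is in the amenability/nuclearity step, and here your argument has the gap you yourself flag. You know $C^*(\mathcal{G}_{\mathrm{tight}})\cong C^*(\mathcal{U})\rtimes_\eta G$ at the level of \emph{full} $C^*$-algebras, and amenability of $G$ gives $C^*(\mathcal{U})\rtimes_\eta G=C^*(\mathcal{U})\rtimes_{\eta,r} G$; but nothing in the paper identifies $C^*_r(\mathcal{G}_{\mathrm{tight}})$ with the reduced crossed product, so you cannot directly conclude $C^*(\mathcal{G}_{\mathrm{tight}})=C^*_r(\mathcal{G}_{\mathrm{tight}})$. And even if you could, coincidence of full and reduced groupoid $C^*$-algebras is not known to imply amenability of the groupoid in general.

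The paper closes this gap differently and more cleanly: it observes that $C^*(\mathcal{U})$ is nuclear (citing \cite{li19-boundary,kat40}), so the crossed product by the amenable group $G$ is nuclear, hence $C^*(\mathcal{G}_{\mathrm{tight}})\cong \mathcal{O}_{G,\mathcal{U}}$ is nuclear. For a second-countable Hausdorff \'etale groupoid, nuclearity of the full $C^*$-algebra passes to the reduced one (as a quotient) and then forces amenability of the groupoid, after which \cite[Theorem 5.1]{brt92} applies to the full algebra. Replacing your ``full $=$ reduced'' transport by this nuclearity argument fixes the only real hole in your proposal.
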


\begin{proof}
It is known that the ultragraph $C^*$-algebra $C^*( {\mathcal{U}}  )$ is nuclear (this follows from \cite[Theorem 5.2]{li19-boundary} and \cite[Proposition 6.1]{kat40} for example).
As $G$ is an amenable group, then ${\mathcal{O}}_{G,{\mathcal{U}}} \cong C^*( {\mathcal{U}}  )\rtimes_\eta G$ is a nuclear $C^*$-algebra as well.

On the other hand, since $(G,{\mathcal{U}},\iota) $ is pseudo free, the groupoid $\mathcal{G}_{\mathrm{tight}}({S}_{G,{\mathcal{U}}})$ is Hausdorff \cite[Theorem 9.5]{ana02}, and also is minimal and effective by Theorems \ref{thm3.4} and \ref{thm4.8}. Therefore, \cite[Theorem 5.1]{brt92} implies that ${\mathcal{O}}_{G,{\mathcal{U}}} \cong C^*( \mathcal{G}_{\mathrm{tight}}({S}_{G,{\mathcal{U}}}))$ is simple.
\end{proof}

For example, if we consider the self-similar ultragraphs in Examples \ref{ex5.1} and \ref{ex5.2} with the trivial 1-cocycle $\varphi=\iota$, then they satisfy the above conditions (1), (2) and (3). Since both $\mathbb{Z}$ and $\mathbb{Z}_2$ are amenable groups, Corollary \ref{cor6.4} implies that the associated $C^*$-algebras are simple.

Note that the self-similar ultragraph $(\mathbb{Z},{\mathcal{U}},\iota) $ in Example \ref{ex5.3}, with $\varphi=\iota$, does not satisfy Condition $(*)$ by Proposition \ref{prop5.5} (we have $\iota(1,e_0)+\iota(1,e_1)=1+1=2$). In particular, Corollary \ref{cor6.4} above could not be applied for this self-similar ultragrah.


\subsection*{Acknowledgement}
The authors would like to thank the anonymous referees for their helpful and detailed comments to improve the initial version of the manuscript. They also acknowledge financial support from Shahid Chamran University of Ahvaz (grant no. SCU.MM1403.279).




\end{document}